%% file: main-LICS.tex
\documentclass[a4paper,UKenglish,numberwithinsect]{lipics-v2021}
\pdfoutput=1
\hideLIPIcs
\title{The Finite Length Property of~the~Rado~Graph~and~Friends}

\author{Jingjie Yang}{University of Oxford, UK}
{jingjieyang2001@gmail.com}
{https://orcid.org/0009-0002-5393-6083}
{}

\author{Mikołaj Bojańczyk}{University of Warsaw, Poland}
{bojan@mimuw.edu.pl}
{https://orcid.org/0000-0002-7758-1072}
{}

\author{Bartek Klin}{University of Oxford, UK}
{bartek.klin@cs.ox.ac.uk}
{https://orcid.org/0000-0001-5793-7425}
{}

\funding{Mikołaj Bojańczyk was supported by the Polish National Science Centre (NCN) grant ``Polynomial finite state computation'' (2022/46/A/ST6/00072).
Jingjie Yang was partially supported by the Polish National Science Centre (NCN) grant ``Linear algebra in orbit-finite dimension'' (2022/45/N/ST6/03242).}

\authorrunning{J. Yang, M. Bojańczyk, and B. Klin} 

\Copyright{Jingjie Yang, Mikołaj Bojańczyk, and Bartek Klin} 

\ccsdesc[500]{Theory of computation~Logic}
\ccsdesc[500]{Mathematics of computing~Random graphs}
\ccsdesc[500]{Theory of computation~Automata over infinite objects}

\keywords{Rado graph, oligomorphic structure, orbit-finite set, orbit-finitely spanned vector space, equivariant subspace, finite length}

\relatedversiondetails[]{Conference Version}{https://doi.org/10.4230/LIPIcs.LICS.2026.82} 

\acknowledgements{We thank David M. Evans, Arka Ghosh, and Ehud Hrushovski for helpful discussions. 
We are very grateful to the anonymous reviewers for their insightful comments.}

\nolinenumbers

\EventEditors{Claudia Faggian and Joost-Pieter Katoen}
\EventNoEds{2}
\EventLongTitle{41st Annual Symposium on Logic in Computer Science (LICS 2026)}
\EventShortTitle{LICS 2026}
\EventAcronym{LICS}
\EventYear{2026}
\EventDate{July 20--23, 2026}
\EventLocation{Lisbon, Portugal}
\EventLogo{}
\SeriesVolume{380}
\ArticleNo{82}

\usepackage{mathtools}

\usepackage{cancel}

\usepackage{quiver}
\usepackage{tikz}
\usetikzlibrary{automata,calc,backgrounds,arrows.meta,decorations.pathmorphing,positioning,automata}
\usepackage[all]{xy}

\usepackage{booktabs}

\newtheorem{fact}[theorem]{Fact}

\theoremstyle{definition}
\newtheorem{non-example}[theorem]{Non-example}

\newcommand{\Bclass}{\mathscr{B}}
\newcommand{\Cclass}{\mathscr{C}}
\newcommand{\Fclass}{\mathscr{F}}
\newcommand{\Jclass}{\mathscr{J}}

\newcommand{\A}{\mathbb{A}}
\newcommand{\B}{\mathbb{B}}
\newcommand{\N}{\mathbb{N}} 
\newcommand{\Q}{\mathbb{Q}} 
\newcommand{\W}{\mathbb{W}} 

\newcommand{\FF}{\mathsf{F}} 
\newcommand{\ff}{\innerfield} 
\newcommand{\EE}{\mathsf{E}} 

\newcommand{\Aut}{\operatorname{Aut}}
\newcommand{\Lin}{\operatorname{Lin}}

\newcommand{\Ker}{\operatorname{BLin}}
\newcommand{\Cog}{\operatorname{Cog}}
\newcommand{\Forb}{\operatorname{Forb}}

\newcommand{\vsup}[1]{[#1]}

\newcommand{\field}{\FF}
\newcommand{\innerfield}{\mathsf{k}}

\newcommand{\set}[1]{\{#1\}}

\newcommand{\setbuild}[2]{\set{#1 \mid \text{#2}}}

\newcommand{\myunderbrace}[2]{\underbrace{#1}_{\text{
\begin{tabular}{c}
	#2
\end{tabular} }}}

\newcommand{\lineqfun}[2]{(#1 \underset{\textup{lineq}}{\longrightarrow} #2)}
\newcommand{\linfsfun}[2]{(#1 \underset{\textup{linfs}}{\longrightarrow} #2)}

\newcommand{\fsfun}[2]{(#1 \underset{\textup{fs}}{\longrightarrow} #2)}

\newcommand{\bigdot}{\boldsymbol{\cdot}}

\begin{document}

\maketitle

\begin{abstract}
  An infinite structure has the finite length property (over a given field) if, for each of its finite powers, chains of equivariant subspaces in the corresponding free vector space are bounded in length. 
  Prior work showed that the countable pure set and the countable dense linear order without endpoints have this property. 
  We generalise these results to 
  (a)~any structure approximated by finite substructures with few orbits, provided the field is of characteristic zero, and 
  (b)~any Fraïssé limit with free amalgamation in a finite vocabulary consisting of unary and binary relations, possibly expanded with a generic total order. 
  As a special case, we deduce the finite length property of the Rado graph using both methods. 
  We also describe some connections with function spaces, weighted register automata, and orbit-finite systems of linear equations.
\end{abstract}

\input{src/Intro}
\input{src/Structures}
\input{src/Sets}
\input{src/Spaces}
\input{src/Rado-cogless}
\input{src/Duals}

\input{src/Rado-cogs}
\input{src/Conclusion}

\emergencystretch=1em
\bibliography{atoms.bib}

\newpage
\appendix
\input{src/appendix-acp}

\input{src/appendix-symplectic}
\input{src/appendix-function-spaces}
\input{src/appendix-woodrow-lachlan}

\input{src/appendix-cogs.tex}
\input{src/appendix-coefficients.tex}

\end{document}

%% file: src/Intro.tex
\section{Introduction}
This paper is part of a research programme focused on orbit-finite sets and structures. 
In this programme, one starts with an infinite relational structure $\A$ whose elements are called \emph{atoms}. 
Based on these, one constructs sets that are called \emph{orbit-finite}. 
Precise definitions will follow, but the understanding is that elements of an orbit-finite set are constructed using atoms, and there are only finitely many elements up to automorphisms of $\A$. 
For the theory to make sense, we must assume that $\A$ is \emph{oligomorphic}, which means that $\A^d$ has finitely many orbits for every $d$. 
The simplest example of an oligomorphic atom structure is what we refer to as the \emph{equality atoms}; 
this is the structure with a countably infinite underlying set and no relations except for equality. 
This structure, like all oligomorphic structures over suitable vocabularies of relations, arises by applying a model-theoretic construction (the Fraïssé limit) to a well-behaved class of finite structures. 
Figure~\ref{fig:fraisse-limits} shows other examples of such structures. 

\begin{figure}
    \centering
    \begin{tabular}{lll}
   &  Class of finite structures & Fraïssé limit\\
    \toprule
    1.& finite sets with equality only & $(\N, =)$ \\
    2.& finite orders & $(\Q, <)$ \\
    3.& finite graphs & $(\text{Rado graph}, E)$ \\ 
    4.& (subsets of) finite $\FF_2$-vector spaces & $\FF_2 \oplus \FF_2 \oplus \FF_2 \oplus \cdots$ 
    \\
    \bottomrule  
\end{tabular}
\caption{Examples of Fraïssé limits.}\label{fig:fraisse-limits} 
\end{figure}

When the underlying atom structure is oligomorphic, the orbit-finite sets have a robust theory, which resembles in some ways the theory of finite sets. 
This theory was originally developed to describe regular languages over infinite alphabets, by considering orbit-finite versions of various automata models~\cite{bojanczykNominalMonoids2013,bojanczykAutomataTheoryNominal2014}, but it has since expanded to cover other models, such as orbit-finite Turing machines~\cite{bojanczykTuringMachinesAtoms2013} or orbit-finite constraint satisfaction problems~\cite{klin2015locally}.
There are also programming languages with data structures that can store orbit-finite sets~\cite{bojanczyk2012towards,bojanczyk2012imperative}, with working implementations~\cite{kopczynski2016lois, szynwelski-phd}. 
For a survey of the orbit-finite programme, we refer to the lecture notes~\cite{bojanczyk_slightly}. 

Some results about orbit-finite models do not depend on the choice of the atom structure, while others do. 
An example of the latter case arises for orbit-finite Turing machines~\cite{bojanczykTuringMachinesAtoms2013}. 
If the atoms are the Fraïssé limit of total orders, as in row~2 of Figure~\ref{fig:fraisse-limits}, then the orbit-finite version of the \textsc{p}$\stackrel{?}{=}$\textsc{np} question has the same answer as the classical version without atoms. 
On the other hand, for any of the other three rows of the table, \textsc{p}$\neq$\textsc{np} holds unconditionally in the orbit-finite setting due to problems with choice. 
(Failure of the axiom of choice in set theory with atoms was the original motivation of Fraenkel and Mostowski in the 1930s to study the first two rows of Figure 1; see e.g.~\cite[Ch.~4]{Jech}.)
The dependence on the underlying atom structure will play a prominent role in this paper. 

\paragraph*{Vector spaces}
One direction of the orbit-finite programme, motivated by the study of orbit-finite weighted automata, is focused on vector spaces~\cite{BFKM24}. 
In these spaces (taken over some fixed field), one can take linear combinations and apply automorphisms of the atom structure. 
We are interested in spaces that have an orbit-finite spanning set, which means that the entire space can be obtained from some finite subset by using atom automorphisms and linear combinations.
A prototypical example is the space $\Lin X$ that consists of formal linear combinations of elements from some orbit-finite set~$X$. 
The original application of these spaces was in automata theory, but they have also found applications in the study of  orbit-finite linear programming~\cite{ghosh2023orbit}, function spaces for orbit-finite sets~\cite{functionSpaces2024}, or  the analysis of two-party communication protocols over infinite alphabets~\cite{aliceBob}.

To be useful, the theory of orbit-finitely spanned vector spaces should have certain properties. 
For example, one would like to be able to represent these spaces in a finite way, or tackle algorithmic problems such as solving systems of linear equations. 
One rather modest requirement is that such spaces be closed under taking \emph{equivariant} subspaces 
(i.e.~subspaces closed under atom automorphisms as well as linear combinations): 
an equivariant subspace of an orbit-finitely spanned vector space should itself be orbit-finitely spanned. 
We do not know if this closure property holds in general, and we see it as an important open problem. 
It can be equivalently phrased in terms of ascending chains: 
is it true that every orbit-finitely spanned vector space is \emph{Noetherian}, meaning that one cannot find an infinite ascending chain of its equivariant subspaces? 
To the best of our knowledge, this question was first recognised by Camina and Evans~\cite[Q.~2]{CaminaEvans_91} who identified a sufficient condition for this, namely the existence of a ``nice ordering''. 
Using this condition they showed that certain vector spaces are Noetherian: notably, $\Lin \A$ over the ordered atoms (row~2 of Figure~\ref{fig:fraisse-limits}), $\Lin {\binom \A d}$ over the equality atoms (row~1), and a similar family of spaces over the bit-vector atoms (row~4). 

The above question was independently considered in~\cite[p.~21]{BFKM24} where it was conjectured that every oligomorphic structure has the \emph{ascending chain property}, meaning that all orbit-finitely spanned vector spaces over the structure are Noetherian.
In such a vector space, if descending chains as well as ascending chains of equivariant subspaces are all finite, by the Jordan--Hölder Theorem, there is some finite upper bound on the length of chains of its equivariant subspaces.
In that case, we say that the structure has the \emph{finite length property}.
This stronger property has been confirmed for:
\begin{itemize}
    \item \emph{The equality atoms.} 
    There are three independent proofs of finite length, in three different contexts: model theory~\cite[Thm.~3.9]{EvansRashwan_02}, representation theory~\cite[Prop.~6.1.6]{SamSnowden_15}, and orbit-finite set theory~\cite[Cor.~4.9]{BFKM24}. 
    The result from~\cite{SamSnowden_15} assumes that the underlying field is the complex numbers, while the other two do not restrict the choice of a field. 
    \item \emph{The ordered atoms.} 
    Here, the finite length property was proved in~\cite[Thm.~4.8]{BFKM24}. 
    There exist alternative methods to show the weaker ascending chain property of this structure.
    One is to exhibit an AZ enumeration \cite[Thm.~2.1]{RationalsHaveAZ} and derive the ``nice orderings'' from that~\cite{Evans_pm2}.
    Another is based on Hilbert's Basis Theorem~\cite[Thm.~27]{GhoshLasota_24}, which is independently established in~\cite[Ex.~3.2(c)]{LaudoneSnowden_23}, with well-quasi-orders as a key ingredient.
\end{itemize}

The finite length property is strictly stronger than the ascending chain property.
A prime example is the bit-vector atoms (which admit an AZ enumeration \cite[p.~143]{Hrushovski}):
even $\Lin \A$ does not have finite length over the two-element field, as found independently in~\cite[Thm.~2.7]{EvansGray_98} and~\cite[Thm.~4.16]{BFKM24}.
On the other hand, Evans~\cite[Rem.~1.3]{Evans_pm1} observed that for any Fraïssé limit over a finite relational vocabulary, failure of the finite length property over some finite field
would yield a counterexample to the conjecture of Thomas~\cite[p.~177]{thomas1991reducts} in model theory.
The known examples of failure of the finite length property are not good enough for this, since they use an infinite vocabulary (or functions).

\paragraph*{Our contributions} 
Until now, the finite length property has been a theory of two examples: the equality and the ordered atoms. 
We substantially improve this state of affairs, using two different techniques:
\begin{itemize}
    \item 
    In Theorem~\ref{thm:weak-smooth-approximation-finite-length}, we prove the finite length property for those structures $\A$~--~e.g.~the equality atoms and the bit-vector atoms~--~which admit what we call oligomorphic approximation, a relaxed version of smooth approximation known from model theory. 
    This is under an additional assumption that the underlying field has characteristic $0$.

    \item 
    In Theorem~\ref{thm:ordered-free-amalg-has-finite-length}, we prove the finite length property for those structures $\A$~--~e.g.~the ordered atoms~--~which arise as the generically ordered expansions of the Fraïssé limits of free amalgamation classes, over finite relational vocabularies of arity at most $2$. 
    Here we do not restrict the underlying field.
\end{itemize}
In particular, we may use either of these techniques to deduce the finite length property of the Rado atoms (row~3 in Figure~\ref{fig:fraisse-limits}), where even the weaker ascending chain property was not known before.

Most of the paper is devoted to introducing the background necessary to understand these results (Sections~\ref{sec:structures}--\ref{sec:spaces}) and to proving them (Sections~\ref{sec:characteristic-zero},~\ref{sec:free-amalg},~\ref{sec:cogs-turn} and Appendices~\ref{sec:appendix-symplectic},~\ref{sec:appendix-lachlan-woodrow},~\ref{sec:appendix-cogs}). 
In Section~\ref{sec:duals} we discuss some connections with function spaces and weighted automata, and in Section~\ref{sec:equivariant-subspaces} we  briefly discuss  applications to solving orbit-finite systems of equations.

%% file: src/Structures.tex
\section{Structures}\label{sec:structures}

In this section, we briefly recall some basic notions from model theory and describe the main examples of structures that we will consider in this paper.

Let us begin by fixing some terminology.
A \emph{vocabulary} is a set of relations, each with a specified positive arity. 
For example, the vocabulary of graphs contains one binary relation: the edge relation. 
(We do not include equality in the vocabulary, since it is automatically present.) 
A \emph{structure} $\A$ over a vocabulary consists of an underlying set, also denoted by $\A$, together with interpretations of relations from the vocabulary as actual relations on that set. 
An \emph{embedding} between two structures over the same vocabulary is an injective function between their underlying sets that preserves and reflects all relations. 
An \emph{isomorphism} is a surjective embedding.
An \emph{automorphism} of a structure is an isomorphism from the structure to itself; these form a group. 

Automorphisms of $\A$ act on tuples in $\A^d$ componentwise. 
When we speak of orbits in $\A^d$, we mean the orbits under this action of the automorphism group. 
For example, if $\A$ is a graph, then two pairs $(a_1,a_2)$ and $(b_1,b_2)$ are in the same orbit if and only if there is some automorphism of the graph that maps $a_1$ to $b_1$ and $a_2$ to $b_2$. 
In particular, the edge relation must be defined in the same way for both pairs.

All structures considered in this paper will be countable (or finite), and we will always want them to have finitely many orbits in every finite dimension as per the following definition.

\begin{definition}[Oligomorphic structure]\label{def:oligomorphic-structure}
    A structure $\A$ is \emph{oligomorphic} if $\A^d$ has finitely many orbits for every $d \in \set{1,2,\ldots}$.
\end{definition}

A relation on a structure $\A$~--~i.e.~a subset of $\A^d$~--~is called \emph{equivariant} if it is invariant under the action of the automorphism group. 
Equivalently, the relation is a union of orbits. 
If the structure is oligomorphic, then there are finitely many orbits to consider once the dimension $d$ is fixed, 
and therefore only finitely many equivariant relations. 
By the Ryll-Nardzewski Theorem~\cite[Thm.~7.3.1]{hodges1993model}, if the structure is oligomorphic and countable, 
then the equivariant relations are exactly those that can be defined (as subsets) in first-order logic~--~see for instance~\cite[Cl.~5.9]{bojanczyk_slightly}. 
In fact, the infinite structures that we consider in this paper will satisfy a stronger property: 
an equivariant relation will be definable not only by a first-order formula, but even by a quantifier-free one. 
This will be ensured by the additional homogeneity condition defined below. 
In the condition, a \emph{substructure} of $\A$ is any structure obtained by restricting $\A$ to some subset  of its underlying set; 
we do not distinguish between substructures and subsets.

\begin{definition}[Homogeneous structure] 
    A structure $\A$ is \emph{homogeneous} if every isomorphism between finite substructures of $\A$ 
    (i.e.~embedding of a finite substructure of $\A$ into $\A$) 
    extends to an automorphism of $\A$.
\end{definition}

In a homogeneous structure $\A$, an orbit in $\A^d$ consists of tuples that satisfy the same quantifier-free formulas~--~see for instance~\cite[Thm.~6.3]{bojanczyk_slightly}. 
Every homogeneous structure arises via a construction called the \emph{Fraïssé limit}~\cite[Sec.~7.4]{hodges1993model}, from a class of finite structures that satisfies certain closure properties (a so-called \emph{amalgamation class}). 
For the Fraïssé limit to be not only homogeneous, but also oligomorphic, we need to assume that the underlying class~--~consisting precisely of the finite structures that embed in the Fraïssé limit~--~has only finitely many non-isomorphic structures of each finite size. 
This assumption is automatically satisfied if the vocabulary contains only finitely many relations of each arity,
in particular if the vocabulary is finite.

Here are some of the important structures that we will consider in this paper.
They are all homogeneous and oligomorphic.
\begin{example}[Equality atoms] \label{ex:equality-atoms} 
    In this structure, the underlying set is the natural numbers and there are no relations other than equality. 
    Automorphisms are arbitrary permutations. 
    Two tuples in the same orbit necessarily have the same equality pattern,
    and conversely this condition is sufficient, which guarantees homogeneity.
    For instance, if two tuples $(a_1, a_2)$ and $(b_1, b_2)$ both have non-repeating entries, then the mapping $a_1 \mapsto b_1, a_2 \mapsto b_2$ can be extended first to a permutation of $\{a_1, a_2, b_1, b_2\}$, and then to one of the whole structure by mapping other elements identically.
    There is another orbit in dimension $d = 2$, defined by $x_1 = x_2$.
\lipicsEnd\end{example}

\begin{example}[Ordered atoms] \label{ex:order-atoms} 
    In this structure, the underlying set is the rational numbers, 
    equipped with the usual order~--~the vocabulary consists of this binary relation only.
    Automorphisms are order-preserving permutations. 
    Two tuples are in the same orbit if and only if they have the same order pattern. 
    For instance, for two tuples $(a_1, a_2)$ and $(b_1, b_2)$ such that $a_1<a_2$ and $b_1<b_2$, mapping $a_1 \mapsto b_1, a_2 \mapsto b_2$ can be extended to an automorphism by using Cantor's back-and-forth method, or by explicitly defining a monotone piecewise linear bijection.
    There are two other orbits in dimension $d = 2$, defined by $x_1 = x_2$ and $x_1 > x_2$.
\lipicsEnd\end{example}

\begin{example}[Vector atoms] \label{ex:vector-space-atoms} 
    Fix some finite field $\innerfield$, and let $\A$ be the vector space of countable dimension over $\innerfield$. 
    This vector space is seen as a structure over an infinite vocabulary containing, for every $d \in \set{1,2,\dots}$ and coefficients $\lambda_1, \dots, \lambda_d$ in $\innerfield$, the relation 
    \begin{align*}
        \setbuild{(a_1,\dots,a_d) \in \A^d}{ $\lambda_1 a_1 + \cdots + \lambda_d a_d = 0$ }.
    \end{align*}
    The vocabulary is defined so that automorphisms are the same thing as permutations that are linear maps. 
    Two tuples of the same length are in the same orbit if and only if they have the same linear dependencies. 
    By way of illustration, consider two tuples $(a_1, a_2)$ and $(b_1, b_2)$ each consisting of linearly independent entries.
    Extend $a_1, a_2$ to a basis for the subspace $V$ of $\A$ spanned by $a_1, a_2, b_1, b_2$; 
    do the same for $b_1, b_2$.
    This gives us an automorphism of $V$ with $a_1 \mapsto b_1, a_2 \mapsto b_2$, which can then be extended to an automorphism of $\A$ by extending these two bases of $V$ to bases of $\A$.
    In $\A^2$ there are $2 + |\innerfield|$ other orbits:
    $x_1 = 0 = x_2$;
    $x_1 = 0 \neq x_2$;
    and $x_1 \neq 0, x_2 = \lambda x_1$, where $\lambda$ ranges over the field $\innerfield$.
\lipicsEnd\end{example}

\begin{example}[Rado atoms]\label{ex:rado-graph} 
    The Rado graph is the Fraïssé limit of the class of finite undirected graphs. 
    Here, an undirected graph is viewed as a structure with one binary relation that is symmetric and irreflexive. 
    One explicit description of the Rado graph is as follows: its vertices are the natural numbers, and there is an edge between $n < m$ if the $n$-th least significant bit in the binary representation of $m$ is $1$.
    A more famous characterisation of the Rado graph is that if one randomly selects a graph with a countable set of vertices by independently including each possible edge with probability $1/2$, then with probability $1$ the resulting graph is isomorphic to the Rado graph. 
    
    As a Fraïssé limit, the Rado graph is homogeneous by construction.
    So two tuples are in the same orbit if and only if they have the same equality and adjacency patterns.
    In dimension $d = 2$ there are three orbits:
    the two coordinates can be equal, adjacent hence distinct, or distinct but non-adjacent.
\lipicsEnd\end{example}

\pagebreak

%% file: src/Sets.tex
\section{Orbit-finite sets}
\label{sec:orbit-finite-sets}
We shall now briefly explain the concept of orbit-finiteness.
We start with a countably infinite oligomorphic structure $\A$, as described in Section~\ref{sec:structures}, whose elements we call \emph{atoms}. 
These can be used to construct sets that are finite up to the automorphisms of $\A$, such as
\begin{align*}
   \myunderbrace{\A^2}{pairs}
   \qquad
   \myunderbrace{\setbuild{ (a,b) \in \A^2}{ $a \neq b$ }}{non-repeating pairs}
   \qquad
   \myunderbrace{\binom{\A}{2}.}{unordered pairs}
\end{align*}

There are several equivalent definitions of orbit-finiteness. 
Of these, we use one that is based on first-order interpretations~\cite[Sec.~4.3]{hodges1993model}. 
To construct an orbit-finite set, we proceed in three steps. 
In the examples above, this process is as follows:
start with all pairs of atoms, as in the first example; 
restrict to the equivariant subset of non-repeating pairs, as in the second example; 
and then take the quotient, as in the third example, by the equivariant equivalence relation that identifies two pairs if they differ only in the order of their elements.
The general definition is given below.

 \begin{definition}[Orbit-finite set]\label{def:orbit-finite-set}
    An \emph{orbit-finite set} over an oligomorphic structure $\A$ is any set that is obtained as follows: 
    \begin{enumerate}
        \item Start with a finite power $\A^d$ for some $d \in \set{1,2,\dots}$;
        \item Restrict it to an equivariant subset $X \subseteq \A^d$;
        \item Quotient $X$ under an equivariant equivalence relation. 
    \end{enumerate}
 \end{definition}

Let us justify the name ``orbit-finite'' in this definition. 
An orbit-finite set is equipped with an action of the automorphism group of the original structure $\A$, namely the action inherited from $\A^d$, suitably extended to the quotient. 
Under this action, the set has finitely many orbits: 
$\A^d$ has finitely many orbits by oligomorphicity, 
and the number of orbits can only go down when restricting to an equivariant subset and quotienting under an equivariant equivalence relation. 
  
There is another equivalent definition of orbit-finite sets, 
given in Definition~\ref{def:orbit-finite-set-alternative} below, that emphasises the role of the group action. 
In this definition, we use the following notion of support: 
if $X$ is a set equipped with an action of the automorphism group of $\A$, 
then a \emph{support} for an element $x \in X$ is any set $S \subseteq \A$ such that every automorphism $\pi$ of $\A$ satisfies
\begin{align*}
   \myunderbrace{\forall a \in S : \pi(a)=a}{action of $\pi$ on $\A$}
   \qquad \implies \quad
   \myunderbrace{\pi(x)=x.}{action of $\pi$ on $X$}
\end{align*}
\begin{definition}
   [Orbit-finite set, abstractly] \label{def:orbit-finite-set-alternative} An \emph{orbit-finite set} over an oligomorphic structure $\A$ is a set $X$ equipped with an action of the automorphism group of $\A$ such that: 
   \begin{bracketenumerate}
      \item every element has some finite support;
      \item there are finitely many orbits under the action.
   \end{bracketenumerate}
\end{definition}

Thanks to the underlying atom structure being infinite but oligomorphic, the two definitions above are equivalent 
in the sense that they describe the same sets, up to equivariant bijections: see~\cite[Thm.~5.12]{bojanczyk_slightly} and~\cite[Prop.~2.19]{BodirskyBodorMarimon_25}. 
Both definitions have their uses. 
Definition~\ref{def:orbit-finite-set} is more concrete, and it comes with a finite representation, which can be used for algorithms that process orbit-finite sets. 
On the other hand, some constructions (e.g.~disjoint unions, Cartesian products) are more naturally presented using the more abstract Definition~\ref{def:orbit-finite-set-alternative}. 

\flushbottom
\pagebreak

%% file: src/Spaces.tex
\section{Orbit-finitely spanned vector spaces}\label{sec:spaces}
We now introduce the main topic of this paper: vector spaces with an orbit-finite spanning set. 
We begin with spaces that have an orbit-finite basis; 
this special case has an elementary definition, and yet it will be the relevant case for almost all results of this paper. 

\begin{definition}[Orbit-finite-dimensional vector space]\label{def:orbit-finite-basis}
    $\Lin_\field{X}$ is, given an orbit-finite set $X$ and a field $\field$, the vector space of finite formal linear combinations of elements in $X$ over $\field$.
\end{definition}

This definition has two important parameters: the field $\field$ and the oligomorphic structure $\A$ over which $X$ is an orbit-finite set.
The spaces defined here have two kinds of structure: that of a vector space, and the action of the automorphism group of $\A$, via $\pi(\sum_i \lambda_i x_i) = \sum_i \lambda_i \pi(x_i)$. 
We will be interested in subsets that preserve both kinds of structure, 
i.e.~those that are closed under taking linear combinations as well as applying automorphisms of $\A$. 
Such subsets are called \emph{equivariant subspaces}.

\begin{example}\label{ex:length-one-dim-one}
    Let $\A$ be the equality atoms and let $\field$ be an arbitrary field. 
    As explained in~\cite[Ex.~4.2]{BFKM24}, the vector space $\Lin_\field \A$ has only three equivariant subspaces: 
    the zero subspace, the full space, and the subspace consisting of vectors whose coefficients add up to zero 
    (e.g.~$a - b$ and $a + b - 2 c$, with $a, b, c \in \A$ distinct). 
\lipicsEnd\end{example}

 Unfortunately, there is a price to pay for the elementary character of Definition~\ref{def:orbit-finite-basis}, 
 and that is the failure of certain closure properties. 
 In particular, these spaces are not closed (even up to equivariant linear bijections) under taking equivariant subspaces, or quotients by such spaces. 
 The problem with subspaces is already apparent in Example~\ref{ex:length-one-dim-one}, since the unique non-trivial subspace of $\Lin_\field \A$ does not have any equivariant basis, regardless of the underlying field~\cite[Ex.~6.1]{BFKM24}; 
 this subspace can also be realised as a quotient of $\Lin_\field \A^2$.
We therefore need a more general notion of a vector space, in the style of Definition~\ref{def:orbit-finite-set-alternative}.

\begin{definition}[Orbit-finitely spanned vector space, abstractly]\label{def:orbit-finitely-spanned-abstract}
    An \emph{orbit-finitely spanned vector space} over an oligomorphic structure $\A$ is a vector space equipped with an action of automorphisms of $\A$ such that: 
    \begin{bracketenumerate}
        \item
        vector addition and scalar multiplication are equivariant,
        \footnote{Equivalently, the group action $v \mapsto \pi(v)$ of each automorphism $\pi$ is linear.}

        \item 
        every vector is supported by a finite set of atoms;%
        \footnote{Equivalently, the application $(\pi, v) \mapsto \pi(v)$ is continuous, where the automorphism group is endowed with the topology of pointwise convergence whilst the vector space is endowed with the discrete topology. 
        See~\cite[Lemma~4.1.5]{hodges1993model}.}

        \item \label{item:orbit-finite-spanning-subset} 
        the space is spanned by some equivariant subset that is orbit-finite.%
        \footnote{Equivalently, the vector space is finitely generated as a module over the group ring of automorphisms.}
    \end{bracketenumerate}
\end{definition}

The spaces defined above include those of the form $\Lin_\field \A^d$, as $\A^d$ is orbit-finite. 
They are also easily seen to be closed under taking quotients by equivariant subspaces. 
Closure under taking equivariant subspaces is less obvious: 
one could imagine that condition~\eqref{item:orbit-finite-spanning-subset} above is violated by moving to an equivariant subspace. 
This closure property turns out to be closely related to the ascending chain property discussed in the introduction:

\begin{theorem}\label{thm:ACP}
    For any field $\field$ and oligomorphic structure $\A$, the following conditions are equivalent: 
    \begin{alphaenumerate}
        \item\label{item:sub-ofs-is-ofs}
        orbit-finitely spanned vector spaces are closed under taking equivariant subspaces; 
        
        \item\label{item:ofs-implies-Noetherian}
        for every orbit-finitely spanned vector space $V$, there are no infinite ascending chains of equivariant subspaces in $V$;

        \item\label{item:LinAd-is-Noetherian}
        for every $d \in \set{1,2,\dots}$, the vector space 
        $
            \Lin_\field \A^d
        $ does not have any infinite ascending chain of equivariant subspaces.
    \end{alphaenumerate}
    Furthermore, if these conditions hold, then the orbit-finitely spanned vector spaces are~--~up to equivariant linear bijections~--~precisely the vector spaces of the form $U/W$, 
    where $W \subseteq U$ are equivariant subspaces of $\Lin_\field \A^d$ and $d \in \set{1,2,\dots}$.
\end{theorem}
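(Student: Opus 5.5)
We prove the cycle \eqref{item:sub-ofs-is-ofs}$\Rightarrow$\eqref{item:ofs-implies-Noetherian}$\Rightarrow$\eqref{item:LinAd-is-Noetherian}$\Rightarrow$\eqref{item:sub-ofs-is-ofs}, and then deduce the final claim.

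\textbf{From \eqref{item:sub-ofs-is-ofs} to \eqref{item:ofs-implies-Noetherian}.} Let $V_1 \subseteq V_2 \subseteq \cdots$ be an ascending chain of equivariant subspaces of $V$. Its union $U$ is an equivariant subspace, so by \eqref{item:sub-ofs-is-ofs} it is spanned by an orbit-finite equivariant set $Y$. Choose one representative from each of the finitely many orbits of $Y$. Each representative lies in some $V_i$, so all of them lie in some single $V_n$. Since $V_n$ is equivariant and closed under linear combinations, it contains all of $Y$. Hence $V_n = U$, and the chain stabilises.

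\textbf{From \eqref{item:ofs-implies-Noetherian} to \eqref{item:LinAd-is-Noetherian}.} This is immediate, because $\Lin_\field \A^d$ is orbit-finitely spanned.

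\textbf{From \eqref{item:LinAd-is-Noetherian} to \eqref{item:sub-ofs-is-ofs}.} This is the substantive direction.

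\emph{Step 1.} Show that every orbit-finitely spanned $V$ is an equivariant quotient of some $\Lin_\field \A^d$. Let $Y$ be an orbit-finite spanning set. By Definition~\ref{def:orbit-finite-set-alternative} and its equivalence with Definition~\ref{def:orbit-finite-set}, we may write $Y$ as a quotient of an equivariant subset $X\subseteq\A^{d'}$. Pad the finitely many orbits of $X$ to a common dimension $d$ and send each remaining orbit of $\A^d$ to $0$, or to an arbitrary equivariant image in $Y$. This gives an equivariant function $\A^d \to V$ whose image spans $V$. Extending it linearly yields an equivariant linear surjection $\Lin_\field\A^d\to V$.

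\emph{Step 2.} Let $W\subseteq V$ be an equivariant subspace. Its preimage $\tilde W\subseteq \Lin_\field\A^d$ is an equivariant subspace. It suffices to show that $\tilde W$ is orbit-finitely spanned, since its image $W$ is then spanned by the image of that spanning set, and an equivariant image of an orbit-finite set is orbit-finite.

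\emph{Step 3.} Build an ascending chain inside $\tilde W$. Each vector of $\tilde W$ has finite support, so its orbit is an orbit-finite set (a single orbit of finitely supported elements, which is orbit-finite by Definition~\ref{def:orbit-finite-set-alternative}). Pick vectors $w_1,w_2,\dots$ one at a time, each outside the span of the orbits of the previous ones. If this never stops, the spans form a strictly ascending chain of equivariant subspaces of $\Lin_\field\A^d$, contradicting \eqref{item:LinAd-is-Noetherian}. So some finite stage already spans $\tilde W$, and a finite union of orbits is orbit-finite.

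\textbf{The final claim.} Under \eqref{item:sub-ofs-is-ofs}, Step 1 presents $V \cong \Lin_\field\A^d/W$. Take $U$ to be the whole space $\Lin_\field\A^d$. Conversely, suppose $W \subseteq U \subseteq \Lin_\field \A^d$ are equivariant subspaces. Then $U$ is orbit-finitely spanned by \eqref{item:sub-ofs-is-ofs}. The quotient $U/W$ satisfies conditions (1)--(3) of Definition~\ref{def:orbit-finitely-spanned-abstract}: finite supports pass to quotients, and the image of the spanning set spans.

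\textbf{Main obstacle.} I expect the main obstacle to be Step 1: obtaining a uniform $d$ and an equivariant surjection $\A^d \to Y$. This relies on the equivalence of Definitions~\ref{def:orbit-finite-set} and~\ref{def:orbit-finite-set-alternative}. One also has to handle orbits of $\A^d$ that lie outside the padded copies of $X$; sending them to $0$ resolves this.
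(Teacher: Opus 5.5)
Your proof is correct and uses the same ingredients as the paper's: the union argument for (a)$\Rightarrow$(b), the presentation of an orbit-finitely spanned $V$ as an equivariant quotient of (a subspace of) $\Lin_\field \A^d$, and the greedy orbit-by-orbit construction, which terminates by the chain condition. The only difference is organisational. You close the cycle by pulling the subspace $W$ back to $\Lin_\field\A^d$ and running the greedy argument there, whereas the paper proves (b)$\Rightarrow$(a) directly and gets (c)$\Rightarrow$(b) by pulling back chains along $U \to U/W$. Also, no padding is needed in your Step 1, since Definition~\ref{def:orbit-finite-set} already gives a single $d$.
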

\begin{proof}[Proof sketch.]
    The equivalence of (\ref{item:ofs-implies-Noetherian}) and (\ref{item:sub-ofs-is-ofs}) is a classical result in module theory (see Appendix~\ref{sec:appendix-modules}), and the implication (\ref{item:ofs-implies-Noetherian}) $\Rightarrow$ (\ref{item:LinAd-is-Noetherian}) is immediate.
%
   For the ``furthermore'' part,
       take some orbit-finitely spanned vector space $V$. 
        By Definition~\ref{def:orbit-finite-set}, the spanning set can be obtained from some equivariant subset of $\A^d$ by quotienting under an equivariant equivalence relation. 
        This gives us a surjective equivariant linear map to $V$ from an equivariant subspace $U$ of $\Lin_\field \A^d$; 
        call its kernel $W$, so that we have a bijective equivariant linear map $V \leftarrow U/W$. 

    Finally, the implication (\ref{item:LinAd-is-Noetherian}) $\Rightarrow$  (\ref{item:ofs-implies-Noetherian})  follows from the ``furthermore'' part: 
    the lack of infinite chains is preserved by taking equivariant subspaces and images under equivariant linear maps. 
\end{proof}

We say that an atom structure $\A$ has the \emph{ascending chain property over a field $\field$} if any of the equivalent conditions in Theorem~\ref{thm:ACP} is satisfied. 
One interpretation of the theorem is that the ascending chain property is necessary for the theory of vector spaces to be well-behaved. 
In particular, thanks to the ``furthermore'' part, we get a result similar to the equivalence of Definitions~\ref{def:orbit-finite-set} and~\ref{def:orbit-finite-set-alternative} above, 
i.e.~a concrete representation of the orbit-finitely spanned vector spaces that can be used in algorithms (provided we know how to represent equivariant subspaces~--~see Section~\ref{sec:equivariant-subspaces}).

We are therefore interested in atom structures that have the ascending chain property. 
As it turns out, the techniques used in this paper will yield a stronger property, namely a finite bound on the length of chains:

\begin{definition}
    [Finite length property]
    \label{def:finite-length-property}
    An oligomorphic structure $\A$ has the \emph{finite length property over a field $\field$} if for every orbit-finite set $X$ over $\A$, there is a finite upper bound on the length of chains of equivariant subspaces 
    of $\Lin_\field X$.
    (The supremum of the chain lengths, finite or not, is called the \emph{length} of $\Lin_\FF X$.)
\end{definition}

In light of Theorem~\ref{thm:ACP} 
we could have used $\A^d$ instead of $X$ in the above definition.
As mentioned in the introduction, the finite length property can be strictly stronger than the ascending chain property, as witnessed by $\Lin_\field \A$ over the vector atoms from Example~\ref{ex:vector-space-atoms}. 
(Note that there are two fields involved, namely the finite field used to define the vector atoms $\A$ and the field $\field$ used to define $\Lin_\field \A$. 
In the counterexample, both fields are the two-element field.)
The finite length property was recently studied in~\cite{BFKM24}, where it was shown that the equality atoms (Example~\ref{ex:equality-atoms}) and the ordered atoms (Example~\ref{ex:order-atoms}) have this property over any field. 

The main contribution of this paper is to establish the finite length property of more structures. 
We will use two different techniques for this purpose.

%% file: src/Rado-cogless.tex
\section{Finite length in characteristic zero}
\label{sec:characteristic-zero}

In this section, we present the first of our two main results, 
which is a method for proving the finite length property assuming that the underlying field has characteristic zero. 
Under this assumption, we will establish the finite length property of the Rado atoms (Example~\ref{ex:rado-graph}) and of the vector atoms (Example~\ref{ex:vector-space-atoms}). 
These are new results. 
We also think that the proof itself, even when applied to get already known results for the equality atoms (Example~\ref{ex:equality-atoms}), is of independent interest and arguably simpler than previously known proofs.
The method that we use will work for all structures satisfying the following condition. 
 
\begin{definition}[Oligomorphic approximation]\label{def:oligomorphic-approximation} 
    We say that a homogeneous structure $\A$ has \emph{oligomorphic approximation} if, 
    for every $d \in \set{1,2,\ldots}$, 
    there exists a family $\Bclass$ of finite substructures of $\A$ such that: 
    \begin{bracketenumerate}
        \item \label{item:oligomorphic-approximation-embedding} 
        every finite substructure $S$ of $\A$ is a substructure of some $\B_S \in \Bclass$; and

        \item \label{item:oligomorphic-approximation-orbits} 
        there is a common finite upper bound, for all $\B \in \Bclass$, on the number of orbits in $\B^d$ with respect to automorphisms of $\B$.
    \end{bracketenumerate}
\end{definition}

This is related to the notion of \emph{smooth approximation} in model theory
introduced by Lachlan, developed in~\cite{KLM89} and studied in depth in~\cite{CherlinHrushovski_03}.
There one asks for a family $\Bclass$ that does not depend on $d$, 
and instead of (\ref{item:oligomorphic-approximation-orbits}) one requires that: 
\begin{description}
    \item[(2')]
    $\A$ is oligomorphic, and each $\B \in \Bclass$ is homogeneous. 
\end{description}

\begin{theorem}\label{thm:have-oligomorphic-approximation}
    The following structures have oligomorphic approximation: 
    \begin{romanenumerate}
        \item\label{item:smooth-implies-oligomorphic-approximation}   
        any smoothly approximated structure;\footnote{We work over the canonical vocabulary if the structure is not already homogeneous; 
        see \cite[p.~443]{KLM89}. This does not change the automorphism group or, consequently, the finite length property.}
        
        \item\label{item:eq-has-oligomorphic-approximation} 
        the equality atoms from Example~\ref{ex:equality-atoms};
        
        \item\label{item:vec-has-oligomorphic-approximation}
        the vector atoms from Example~\ref{ex:vector-space-atoms}, for any finite field $\innerfield$;        
        
        \item\label{item:rado-has-oligomorphic-approximation} 
        the Rado atoms from Example~\ref{ex:rado-graph}.

    \end{romanenumerate}
\end{theorem}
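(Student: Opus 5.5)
I will treat the four items separately, starting with (i), which carries most of the weight, and then use it for (ii) and (iii); item (iv) needs a separate construction because the Rado graph is not smoothly approximable.

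\textbf{Item (i).} Let $\A$ be smoothly approximated by a family $\Bclass$ of finite homogeneous substructures, $\A$ being oligomorphic and homogeneous over the canonical vocabulary. Condition~(1) of Definition~\ref{def:oligomorphic-approximation} holds by definition of smooth approximation. For condition~(2), fix $d$ and take $\B \in \Bclass$. Two tuples of $\B^d$ satisfying the same quantifier-free formulas are related by a partial isomorphism of $\B$. By homogeneity of $\B$ this partial isomorphism extends to an automorphism of $\B$, so the tuples lie in the same $\Aut(\B)$-orbit. Hence the number of orbits in $\B^d$ is at most the number of quantifier-free types of $d$-tuples realised in $\A$. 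Since $\A$ is homogeneous, that number equals the number of orbits of $\A^d$, which is finite by oligomorphicity; this bound does not depend on $\B$. One subtlety is that the vocabulary may be infinite, but this is harmless: the quantifier-free type of a tuple in $\B$ is inherited from $\A$, since $\B$ is a substructure.

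\textbf{Items (ii) and (iii).} Both follow from (i), or directly by the same argument. For (ii), take $\Bclass$ to be all finite subsets. Each is homogeneous, being a pure finite set with the full symmetric group, and its orbits on $d$-tuples correspond to equality patterns, of which there are at most the Bell number $B_d$. For (iii), take $\Bclass$ to be the finite-dimensional subspaces. Every finite substructure lies in its finite span. A finite-dimensional subspace is homogeneous, because a linear-dependency-preserving partial map extends to a linear automorphism of the subspace by extending bases. Its orbits on $d$-tuples are determined by linear dependency patterns, and the number of these is bounded independently of dimension.

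\textbf{Item (iv) and the main obstacle.} Finite graphs that are homogeneous are very restricted: by Gardiner's classification they are only the pentagon, $L(K_{3,3})$, disjoint unions of cliques and their complements. Condition~(2') is therefore hopeless for the Rado graph, and I will instead build highly symmetric finite graphs that are not homogeneous but have boundedly many orbits on $d$-tuples. My first choice is the Paley graphs $P_q$ with $q \equiv 1 \pmod 4$ prime, or more generally finite-field Paley graphs. Alternatively, I would use the symplectic graphs on $\FF_2^{2n}$, where $u \sim v$ iff $\omega(u,v)=1$; the appendix title \emph{symplectic} suggests this is the intended route.

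For the symplectic graph, condition~(1) requires every finite graph to embed in it for $n$ large. I would prove this by induction on vertices, choosing vectors with prescribed symplectic products to previous ones; this is possible by nondegeneracy, plus a genericity argument ensuring the new vector is distinct from the earlier ones.

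For condition~(2), $\mathrm{Sp}_{2n}(\FF_2)$ acts by graph automorphisms. The orbit of a $d$-tuple is determined, by Witt's theorem, by the linear dependencies among the entries together with the Gram matrix of $\omega$ on their span. The number of such data is bounded in terms of $d$ alone, independently of $n$.

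I expect the main obstacle to be exactly this combination: choosing the right finite family, and then checking both the universality/extension property and the Witt-theorem orbit bound. In particular, the Witt's theorem step must handle the possibly degenerate spans of the tuples correctly.
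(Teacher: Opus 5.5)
\textbf{Overall.} Your treatment of (i)--(iii) is the paper's argument. For (iv) you use the same family as the paper (symplectic graphs over the two-element field) and the same orbit bound via Witt's theorem. Two simplifications on your side are valid: for the upper bound you only need the symplectic group to act by graph automorphisms, so you can skip the paper's lemma that $\Aut(\overline{\W_n})=\Aut(\W_n)$. Also, Witt's extension theorem for a nondegenerate alternating form does cover degenerate subspaces; this is exactly what Lemma~\ref{lem:symplectic-witt-extension} proves. However, two points in (iv) need correcting.

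\textbf{The greedy embedding for condition (1) fails with the invariant you state.} Nondegeneracy lets you prescribe $\omega(w,v_i)=\epsilon_i$ freely only when $v_1,\dots,v_k$ are linearly independent. Otherwise the prescription must respect their linear relations, and keeping the images merely distinct is not enough. For example, placing a triangle at $e_1, f_1, e_1+f_1$ is a legal distinct choice in your procedure. But then no fourth vertex can be adjacent to all three, since $\omega(w,e_1)=\omega(w,f_1)=1$ forces $\omega(w,e_1+f_1)=0$.

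The fix is to maintain linear independence instead. For independent $v_1,\dots,v_k$, the solutions of the $k$ prescribed equations form a coset of $\{v_1,\dots,v_k\}^\perp$, which has dimension $2n-k$. So there are $2^{2n-k}$ solutions, more than the $2^k$ elements of $\mathrm{span}(v_1,\dots,v_k)$ as soon as $n>k$. Choosing a solution outside the span preserves the invariant, so every graph on $m$ vertices embeds in $\overline{\W_m}$. The paper instead uses the induction of \cite[Thm.~8.11.2]{GR01}, which peels off an adjacent pair $s,t$, sends it to $e_n,f_n$, and twists the remaining edges; this embeds $2n$ vertices in $\overline{\W_n}$.

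You should also say how these abstract graphs become substructures of the Rado graph containing a given finite $S$. Embed $S$ into some $\overline{\W_n}$, embed $\overline{\W_n}$ into the Rado graph by universality, and then compose with an automorphism, using homogeneity, so that the result is the identity on $S$.

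\textbf{Paley graphs fail condition (2), so they are not a valid ``first choice''.} For $q=p^r$, the automorphisms of $P_q$ are the maps $x\mapsto a x^{\sigma}+b$ with $a$ a nonzero square and $\sigma$ a field automorphism. So the group has order $rq(q-1)/2$, and $P_q^3$ has at least $q^3/(rq(q-1)/2)\geq 2q/r$ orbits, which is unbounded. Paley graphs do have the extension property, so they satisfy condition (1), but their automorphism groups are far too small. The symplectic graphs work precisely because the whole symplectic group acts on them.
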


Before proving this, let us note that the ordered atoms (Example~\ref{ex:order-atoms}) do not have oligomorphic approximation.

\begin{non-example}\label{non-ex:weak-smooth-approximation}
Consider the rational numbers with the usual order. 
The finite substructures in this case are finite total orders, 
and already for dimension $d=1$, a finite total order of size $n$ has $n$ orbits. 
So no family $\Bclass$ satisfies both conditions (\ref{item:oligomorphic-approximation-embedding}) and (\ref{item:oligomorphic-approximation-orbits}).
\lipicsEnd\end{non-example}

\begin{proof}[Proof of Theorem~\ref{thm:have-oligomorphic-approximation}]
    For each of these structures, we exhibit a family $\Bclass$ satisfying the two conditions. In fact, in each case the family will not depend on the parameter $d$.
    \begin{romanenumerate}
        \item
        For a homogeneous structure $\A$ smoothly approximated by $\Bclass$, 
        we just use the family $\Bclass$.
        We only need to check condition~(\ref{item:oligomorphic-approximation-orbits}), so let $\B \in \Bclass$. 
        It is easy to check that, as observed in \cite[(II)]{KLM89}, two tuples in $\B^d$ are in the same orbit if and only if they are in the same orbit of $\A^d$.
        Hence the number of orbits in $\B^d$ is at most that of $\A^d$, which is a finite upper bound because $\A$ is oligomorphic.

        \item 
        For the equality atoms, we can choose $\Bclass$ to be all finite subsets, which clearly satisfies~(\ref{item:oligomorphic-approximation-embedding}). 
        Each of these is homogeneous: we can extend a bijection between subsets to a permutation.
        So $\Bclass$ satisfies~(2') and hence~(\ref{item:oligomorphic-approximation-orbits}) by the above argument.

        \item 
        For the vector atoms, we choose $\Bclass$ to be all subspaces of finite dimension. 
        Each of these is homogeneous since we can, by extending a linearly independent set to a basis, extend linear bijections between subspaces to a linear permutation. 
        Again $\Bclass$ satisfies~(\ref{item:oligomorphic-approximation-embedding}) and~(2') hence~(\ref{item:oligomorphic-approximation-orbits}).
        This argument applies to any finite field $\innerfield$.

        \item 
        The most interesting case is the Rado atoms, which are not smoothly approximated~\cite[Rem.~2.1.2]{CherlinHrushovski_03} (essentially because the finite homogeneous graphs~\cite[p.~100]{Gardiner1976} are not diverse enough). 
        The witness for oligomorphic approximation is a family of \emph{symplectic graphs}: see~\cite[Sec.~8.11]{GR01}.%
        \footnote{We are grateful to Ehud Hrushovski for drawing our attention to this construction.} 
        For every $n \in \set{1,2,\ldots}$ define a finite graph as follows. 
        The set of vertices is the vector space over the two-element field with basis 
        \begin{align*}
            \set{e_1,\ldots,e_n, f_1,\ldots,f_n}.
        \end{align*}
        Since the field has two elements, we can view vertices as subsets of this basis. In this graph, there is an edge between vertices $v$ and $w$ if and only if the sets 
        \begin{align*}
            \setbuild{ i \in \set{1,\ldots,n}}{ $e_i \in v$ and $f_i \in w$ }\\
            \setbuild{ i \in \set{1,\ldots,n}}{ $f_i \in v$ and $e_i \in w$ }
        \end{align*}
        have different sizes modulo two. 
        These graphs (embedded in all possible ways in the Rado graph) satisfy condition~\eqref{item:oligomorphic-approximation-embedding} from Definition~\ref{def:oligomorphic-approximation}, i.e.~every finite graph embeds in some symplectic graph~\cite[Thm.~8.11.2]{GR01}. 
        It is also not difficult~--~using techniques from~\cite[pp.~75--83]{Aschbacher_symplectic}~--~to prove condition~\eqref{item:oligomorphic-approximation-orbits}, 
        i.e.~that the number of orbits of $d$-tuples in symplectic graphs is uniformly bounded by a function of $d$ only; see Appendix~\ref{sec:appendix-symplectic}. \qedhere
    \end{romanenumerate}
\end{proof}

The main result of this section is the following theorem.

\begin{theorem}\label{thm:weak-smooth-approximation-finite-length}
    If $\A$ has oligomorphic approximation, then it has the finite length property over any field of characteristic $0$.
\end{theorem}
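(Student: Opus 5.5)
Our plan is to bound the length of $\Lin_\field \A^d$ by the uniform bound $N$ on the number of orbits of $\B^d$, $\B \in \Bclass$, given by condition~(\ref{item:oligomorphic-approximation-orbits}). By Theorem~\ref{thm:ACP} and the remark after Definition~\ref{def:finite-length-property}, it suffices to treat $X = \A^d$: an orbit-finite $X$ is a quotient of an equivariant subset of some $\A^d$, so chains in $\Lin_\field X$ pull back to chains in $\Lin_\field \A^d$.

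Suppose $V_0 \subsetneq V_1 \subsetneq \cdots \subsetneq V_\ell$ is a chain of equivariant subspaces of $\Lin_\field \A^d$. For each $i$ pick a witness $v_i \in V_i \setminus V_{i-1}$. These finitely many vectors involve only finitely many atoms, forming a finite substructure $S$. By condition~(\ref{item:oligomorphic-approximation-embedding}) take $\B = \B_S \in \Bclass$ containing $S$. Put $G = \Aut(\B)$, a finite group, and $W_i = V_i \cap \Lin_\field \B^d$. The key claim is that each $W_i$ is $G$-invariant. This needs care: an automorphism of $\B$ is an isomorphism between finite substructures of $\A$, so by homogeneity of $\A$ it extends to an automorphism $\pi$ of $\A$. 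Then $\pi$ maps $\Lin_\field \B^d$ to itself and preserves $V_i$, so $G$ preserves $W_i$. Since $v_i \in W_i \setminus W_{i-1}$, the chain $W_0 \subsetneq \cdots \subsetneq W_\ell$ is strict, and consists of $\field G$-submodules of the finite-dimensional permutation module $\field[\B^d]$.

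Now characteristic zero enters. By Maschke's theorem $\field[\B^d]$ is semisimple, so $\ell$ is at most the number of irreducible summands, counted with multiplicity. To bound that count, note that the sum of squares of multiplicities $\sum_j m_j^2$ is at most $\dim_\field \operatorname{End}_{\field G}(\field[\B^d])$ (with equality over a splitting field; over a general field of characteristic $0$ it is still an upper bound on $\sum m_j^2$ up to the division-algebra factors, which only help). This endomorphism dimension equals the number of $G$-orbits on $\B^d \times \B^d = \B^{2d}$, since $G$-equivariant matrices are constant on orbits of pairs. Hence $\ell \le \sum_j m_j \le \sum_j m_j^2 \le N_{2d}$, where $N_{2d}$ is the uniform orbit bound for dimension $2d$. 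We apply condition~(\ref{item:oligomorphic-approximation-orbits}) with parameter $2d$, which is allowed since the definition quantifies over every $d$, and choose $\Bclass$ accordingly. This bound is independent of the chain, which gives finite length.

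The main obstacle is the step from a strict chain to a bound via orbit counting. Specifically, we must justify the endomorphism-algebra bound over an arbitrary (not necessarily algebraically closed) field of characteristic zero. First we would extend scalars to the algebraic closure $\overline{\field}$. Strictness and $G$-invariance survive tensoring, because $W_i \otimes \overline{\field}$ remain distinct submodules. Over $\overline{\field}$ the identity $\dim \operatorname{End} = \sum m_j^2$ is exact. The orbit count of $G$ on $\B^{2d}$ does not depend on the field. A minor point is that the witnesses must lie inside $\Lin_\field \B^d$, which holds because $S \subseteq \B$.
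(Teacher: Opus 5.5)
Your proof is correct and follows the paper's argument. You restrict a witnessed chain to $\Lin \B^d$ for some $\B \in \Bclass$, where $\Bclass$ is chosen for the parameter $2d$, using homogeneity. You then apply Maschke's Theorem and bound the number of simple summands by the dimension of the space of equivariant endomorphisms, which equals the number of orbits in $\B^{2d}$. The only difference is that the paper obtains this bound directly: the $n$ projections onto the simple summands are $n$ linearly independent equivariant endomorphisms. That makes your detour through $\sum_j m_j^2$, division algebras and extension of scalars to $\overline{\field}$ unnecessary, though that detour is also valid.
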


Combining Theorems~\ref{thm:have-oligomorphic-approximation} and~\ref{thm:weak-smooth-approximation-finite-length}, we can get the following results, both old and new, concerning the finite length property.

\begin{corollary}\label{cor:weak-finite-length}
    Over any field of characteristic $0$, the following structures have the finite length property: 
    (\ref{item:smooth-implies-oligomorphic-approximation}) all smoothly approximated structures;
    (\ref{item:eq-has-oligomorphic-approximation}) the equality atoms; 
    (\ref{item:vec-has-oligomorphic-approximation}) the vector atoms; 
    (\ref{item:rado-has-oligomorphic-approximation}) the Rado atoms.
\end{corollary}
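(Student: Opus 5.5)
The plan is to bound the length of $\Lin_\field \A^d$ (enough by Theorem~\ref{thm:ACP} and the remark after Definition~\ref{def:finite-length-property}) by transferring any chain of equivariant subspaces down to a finite substructure $\B$. There the problem becomes classical representation theory of the finite group $\Aut(\B)$. Fix $d$. Let $\Bclass$ be the family given by Definition~\ref{def:oligomorphic-approximation} for dimension $2d$ (not $d$). Let $N$ be the uniform bound on the number of $\Aut(\B)$-orbits in $\B^{2d}$ for $\B \in \Bclass$. I aim to show that every chain of equivariant subspaces of $\Lin_\field \A^d$ has length at most $N$.

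\textbf{Step 1 (restriction to a finite piece).} Take a strict chain $V_0 \subsetneq V_1 \subsetneq \cdots \subsetneq V_k$ of equivariant subspaces. Pick witnesses $v_i \in V_i \setminus V_{i-1}$. Let $S \subseteq \A$ be the finite set of atoms occurring in the tuples that appear in these finitely many vectors. By condition~\eqref{item:oligomorphic-approximation-embedding}, choose $\B \in \Bclass$ containing $S$, and set $W_i = V_i \cap \Lin_\field \B^d$. Each $W_i$ is $\Aut(\B)$-invariant. Indeed, any $\pi \in \Aut(\B)$ is an isomorphism between finite substructures of $\A$, so by homogeneity it extends to some $\sigma \in \Aut(\A)$. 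Then for $v \in W_i$ we have $\pi(v) = \sigma(v) \in V_i$, and $\pi(v)$ is still supported in $\B^d$. Moreover $v_i \in W_i \setminus W_{i-1}$, so $W_0 \subsetneq \cdots \subsetneq W_k$ is a strict chain of subrepresentations of the finite-dimensional $\Aut(\B)$-representation $M = \Lin_\field \B^d$.

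\textbf{Step 2 (bounding the length of $M$).} Since $\field$ has characteristic $0$, Maschke's theorem applies to the finite group $G = \Aut(\B)$. Hence $M$ is semisimple, $M \cong \bigoplus_j U_j^{m_j}$ with pairwise non-isomorphic irreducible $U_j$. The length of $M$ as a $G$-module is $\sum_j m_j$, by Jordan--Hölder.

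On the other hand, $\operatorname{End}_G(M) \cong \prod_j \mathrm{Mat}_{m_j}(D_j)$ for division algebras $D_j = \operatorname{End}_G(U_j)$. Its $\field$-dimension is therefore at least $\sum_j m_j^2 \ge \sum_j m_j$. This needs no algebraic closedness of $\field$.

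A $G$-equivariant endomorphism of the permutation module $\Lin_\field \B^d$ is a matrix indexed by $\B^d \times \B^d$ that is constant on $G$-orbits. So $\dim \operatorname{End}_G(M)$ equals the number of $G$-orbits on $\B^d \times \B^d = \B^{2d}$, which is at most $N$ by condition~\eqref{item:oligomorphic-approximation-orbits}. Combining, $k \le \sum_j m_j \le N$, independently of the chain, which gives the finite length property.

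\textbf{Main obstacle.} The essential point is Step~2, specifically converting the orbit bound into a length bound. This is exactly where characteristic $0$ is used: semisimplicity is needed for every finite $\Aut(\B)$ at once, and the orders of these groups are unbounded, so no fixed positive characteristic works. It is also why the family is taken for dimension $2d$ rather than $d$: orbits on pairs of $d$-tuples are what control the endomorphism algebra. Step~1 is routine given homogeneity, but it relies on each automorphism of $\B$ extending to $\A$ and on the chain being witnessed by finitely many vectors. That is why we pass to a single strict chain of fixed length $k$ and bound $k$, rather than handling all subspaces simultaneously.
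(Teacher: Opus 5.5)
Your argument is correct as a proof that oligomorphic approximation implies finite length in characteristic~$0$. It is essentially the paper's proof of Theorem~\ref{thm:weak-smooth-approximation-finite-length}:
\begin{itemize}
\item the same restriction $W_i = V_i \cap \Lin \B^d$ to a member of the family taken for dimension $2d$;
\item Maschke's Theorem;
\item the identification of $\dim \operatorname{End}_G(\Lin \B^d)$ with the number of orbits in $\B^{2d}$.
\end{itemize}
Your Wedderburn count $\sum_j m_j^2 \dim D_j \ge \sum_j m_j$ is a slightly heavier substitute for the paper's Lemma~\ref{lem:dim-bounds-length}. There, the $n$ projections onto the summands of a decomposition into length-$1$ pieces are $n$ linearly independent equivariant endomorphisms. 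Both work over any field of characteristic~$0$.

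What is missing is the other half of the corollary. When you write ``let $\Bclass$ be the family given by Definition~\ref{def:oligomorphic-approximation}'', you assume the structure has oligomorphic approximation. The statement, however, concerns four specific classes of structures, and nothing in your proposal shows that they admit such families. This is exactly Theorem~\ref{thm:have-oligomorphic-approximation}, which you must invoke:
\begin{itemize}
\item For smoothly approximated structures, work over the canonical vocabulary so that $\A$ is homogeneous. Two tuples of $\B^d$ are then in the same $\Aut(\B)$-orbit iff they are in the same orbit of $\A^d$, so oligomorphicity bounds the orbit count.
\item For the equality and vector atoms, take all finite subsets, respectively all finite-dimensional subspaces; these are homogeneous.
\item For the Rado atoms, use the symplectic graphs over the two-element field (Appendix~\ref{sec:appendix-symplectic}). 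They embed every finite graph, and their number of orbits on $d$-tuples is bounded in terms of $d$ alone.
\end{itemize}
The Rado case is the genuinely nontrivial input. The Rado graph is not smoothly approximated, so condition~\eqref{item:oligomorphic-approximation-orbits} cannot be obtained from homogeneous finite substructures and needs this separate construction. With that theorem cited, your proof is complete.
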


As mentioned in the introduction, the finite length property was already known for the equality atoms~--~even over arbitrary fields. 
The results for the vector atoms and the Rado atoms are new. 
The assumption on characteristic zero is important, at least in the case of the vector atoms 
where the finite length property is known to fail over finite fields: see~\cite[Sec.~4.4]{BFKM24}. 
Later on in this paper, we will prove the result for the Rado atoms again using a different method that works for any field.

The rest of this section is devoted to proving Theorem~\ref{thm:weak-smooth-approximation-finite-length}.

\begin{proof}
    [Proof of Theorem~\ref{thm:weak-smooth-approximation-finite-length}]
Fix a structure $\A$ with oligomorphic approximation and a field of characteristic zero. 
Since the field is fixed, we omit the field subscript and write $\Lin X$ for linear combinations of elements in $X$ that use coefficients from that field. 
Fix some power $d \in \set{1,2,\ldots}$. 
Our goal is to show that $\Lin \A^d$ has the finite length property. 
For technical reasons, we apply the assumption on oligomorphic approximation not to $d$, but to $2d$, yielding  some class $\Bclass$ of finite structures that satisfies Definition~\ref{def:oligomorphic-approximation}.

We will now proceed in three steps. 
For convenience, we summarise these steps in Figure~\ref{fig:proof-summary-cogless}. 

    \begin{lemma}\label{lem:reduce-to-finite-substructures}
        For every $d \in \set{1,2,\ldots}$ we have 
        \begin{align*}
        \text{length of }\Lin \A^d
        \quad \leq \quad  \sup_{\B \in \Bclass} \text{\ length of }\Lin \B^d.
        \end{align*}
    \end{lemma}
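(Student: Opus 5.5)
Given a strictly increasing chain $V_0 \subsetneq V_1 \subsetneq \cdots \subsetneq V_k$ of equivariant subspaces of $\Lin \A^d$, I will produce a strictly increasing chain of the same length $k$ in $\Lin \B^d$ for some $\B \in \Bclass$. Here "equivariant" in $\Lin \B^d$ means invariant under $\Aut(\B)$. This gives the inequality directly.

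\textbf{Step 1: witnesses and a finite set of atoms.} For each $i \in \set{1,\dots,k}$ I pick a witness vector $v_i \in V_i \setminus V_{i-1}$. Each $v_i$ is a finite linear combination of tuples, so all the $v_i$ together mention only a finite set $S \subseteq \A$ of atoms. By condition~\eqref{item:oligomorphic-approximation-embedding} there is some $\B = \B_S \in \Bclass$ containing $S$, and then every $v_i$ lies in $\Lin \B^d \subseteq \Lin \A^d$.

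\textbf{Step 2: the restricted chain.} I set $W_i = V_i \cap \Lin \B^d$. These are subspaces of $\Lin \B^d$, and $v_i \in W_i \setminus W_{i-1}$, so the chain $W_0 \subseteq \cdots \subseteq W_k$ is strictly increasing. It remains to show that each $W_i$ is invariant under $\Aut(\B)$, and this is the only real point of the proof. By homogeneity of $\A$, every automorphism $\sigma$ of the finite substructure $\B$ is an isomorphism between finite substructures of $\A$. It therefore extends to an automorphism $\pi$ of $\A$ with $\pi(\B) = \B$, and $\pi$ agrees with $\sigma$ on $\B$. For $w \in W_i$ we then get $\sigma(w) = \pi(w) \in V_i$, because $V_i$ is equivariant. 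We also have $\sigma(w) \in \Lin \B^d$, so $\sigma(w) \in W_i$.

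\textbf{Conclusion and remark.} The length of $\Lin \B^d$ is therefore at least $k$. Since $k$ was arbitrary, the length of $\Lin \A^d$ is at most the supremum over $\B \in \Bclass$, and this also covers the case where that supremum is infinite. I expect no real obstacle here. The main care needed is in the extension argument of Step 2, where homogeneity of $\A$ is used. This lemma also needs neither characteristic zero nor the bound on orbits in condition~\eqref{item:oligomorphic-approximation-orbits}. Those hypotheses will presumably enter in the later steps, which bound the length of $\Lin \B^d$ uniformly, for instance via Maschke's theorem and counting orbits of $\B^{2d}$.
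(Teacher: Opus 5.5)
Your proposal is correct and matches the paper's proof step for step. You pick witnesses $v_i \in V_i \setminus V_{i-1}$, take $\B \in \Bclass$ containing their atoms, set $W_i = V_i \cap \Lin \B^d$, and use homogeneity of $\A$ to extend automorphisms of $\B$ to automorphisms of $\A$, so that each $W_i$ is $\Aut(\B)$-equivariant; your Step 2 just spells out this extension argument in more detail than the paper does.
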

    \begin{proof}
        Consider some chain of equivariant subspaces 
        \begin{align*}
            V_0 \subset V_1 \subset \dots \subset V_n \subseteq \Lin \A^d,
        \end{align*}
        where equivariance is with respect to automorphisms of $\A$. 
        For each $i \in \set{1,\ldots,n}$, choose some vector that is in $V_i$ but not in $V_{i-1}$. 
        Let $S$ be the (finite) set of atoms that appear in these chosen vectors, 
        and use~(\ref{item:oligomorphic-approximation-embedding}) to find some $\B \in \Bclass$ that contains all atoms from $S$. 
        Define  
        \begin{align*}
            W_i = V_i \cap \Lin \B^d.
        \end{align*}
        By homogeneity, every automorphism of $\B$ extends to an automorphism of $\A$, 
        and therefore the space $W_i \subseteq \Lin \B^d$ is equivariant with respect to automorphisms of $\B$. 
        The chain of $W_i$'s continues to be strictly growing, 
        since it contains vectors that witness the growth of the original chain. 
        Hence, the new chain forces the length of $\Lin \B^d$ to be at least $n$. 
    \end{proof}

\begin{figure}
\centering
\begin{tikzcd}[row sep=tiny]
	{\text{length of $\Lin \A^d$} } & \\[-1ex]
	& {\text{(Lemma~\ref{lem:reduce-to-finite-substructures})}} \\[-2ex]
	{\displaystyle\sup_{\B \in \Bclass} \text{length of $\Lin \B^d$}} \\[-2ex]
	& {\text{(Lemma~\ref{lem:dim-bounds-length})}} \\[-1ex]
	{\displaystyle\sup_{\B \in \Bclass} \text{dimension of } \lineqfun{\Lin \B^d}{\Lin \B^d}} \\[-2ex]
	& {\text{(Lemma~\ref{lem:bound-on-dim-for-finite-set})}} \\[-2ex]
	{\displaystyle\sup_{\B \in \Bclass} \text{number of orbits in $\B^{2d}$}} \\[-2ex]
	& {\text{(Definition~\ref{def:oligomorphic-approximation})}} \\[-2ex]
	\infty
	\arrow["\geq"{marking, allow upside down}, draw=none, from=3-1, to=1-1]
	\arrow["\geq"{marking, allow upside down}, draw=none, from=5-1, to=3-1]
	\arrow["{=}"{marking, allow upside down}, draw=none, from=7-1, to=5-1]
	\arrow["{>}"{marking, allow upside down}, draw=none, from=9-1, to=7-1]
\end{tikzcd}
\caption{Summary of the proof of Theorem~\ref{thm:weak-smooth-approximation-finite-length}.}\label{fig:proof-summary-cogless}
\end{figure}

    Thanks to the above lemma, it remains to show that the length of $\Lin \B^d$ is bounded by some number that depends only on $d$. 
    In our proof, this bound will be the number of orbits in $\B^{2d}$. 
    What we have gained by moving from $\A$ to $\B \in \Bclass$ is that our vector spaces now have finite (albeit unbounded) linear dimension, 
    and are acted upon by finite (albeit unbounded) groups. 
    This will let us leverage a well-known result from representation theory called Maschke's Theorem (see e.g.~\cite[Thm.~6.3]{RepTheoryTextbook}).
    In order to be more precise, recall that we write $V = V_1 \oplus V_2$ if each $v \in V$ is equal to $v_1 + v_2$ for some unique $(v_1, v_2) \in V_1 \times V_2$.

    \begin{fact}[Maschke's Theorem]
        Let $V$ be a finite-dimensional vector space over a field of characteristic zero, equipped with a linear action of a finite group $G$. 
        Then $V$ can be decomposed as 
        \begin{align*}
            V = V_1 \oplus \cdots \oplus V_n, 
        \end{align*}
        where each $V_i$ is an equivariant subspace (with respect to the action of $G$) 
        that has length $1$, 
        i.e.~the only equivariant subspaces of $V_i$ are the zero space and the full space $V_i$.
    \end{fact}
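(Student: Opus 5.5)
This Fact is Maschke's Theorem, and the proof I would give is the standard averaging argument. I would argue by induction on $\dim V$. If $V = 0$, take $n = 0$. If $V$ has length $1$, we are done with $n = 1$. Otherwise I would pick a nonzero proper equivariant subspace $W \subset V$ of minimal dimension; by minimality, $W$ has length $1$. The goal is then to produce an equivariant complement $W'$ with $V = W \oplus W'$, and apply the induction hypothesis to $W'$. This is legitimate because $W'$ is again a finite-dimensional space with a linear $G$-action and has strictly smaller dimension. Concatenating $W$ with the decomposition of $W'$ gives the claimed decomposition, since equivariant subspaces of $W'$ are in particular equivariant subspaces of $V$.

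The key step is constructing the complement, and this is the only place where characteristic zero enters. First, by ordinary linear algebra, I would choose any linear projection $p \colon V \to W$ that is the identity on $W$; it need not be equivariant. Then I would average it over the group:
\begin{align*}
  q(v) \;=\; \frac{1}{|G|} \sum_{g \in G} g\bigl(p(g^{-1} v)\bigr).
\end{align*}
Division by $|G|$ makes sense because the field has characteristic $0$, so $|G| \neq 0$. I would check three things:
\begin{itemize}
  \item $q$ maps into $W$, because $W$ is equivariant;
  \item $q$ is the identity on $W$, because for $w \in W$ each summand equals $g(g^{-1}w) = w$;
  \item $q(hv) = h\,q(v)$ for all $h \in G$, by reindexing the sum with $g \mapsto hg$.
\end{itemize}
Hence $q$ is an equivariant projection onto $W$. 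Its kernel $W' = \ker q$ is therefore an equivariant subspace with $V = W \oplus W'$, using the usual decomposition $v = q(v) + (v - q(v))$ and $W \cap \ker q = 0$.

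I expect no real obstacle here. The only subtle point is the averaging step: it requires $|G|$ to be invertible in the field, which is exactly why the paper assumes characteristic zero (any characteristic not dividing $|G|$ would also suffice). Everything else is routine finite-dimensional linear algebra, together with the observation that a minimal nonzero equivariant subspace has length $1$.
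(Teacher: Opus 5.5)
Your argument is correct and is the standard averaging proof of Maschke's Theorem. The paper does not prove this Fact itself but cites it from a representation-theory textbook, where it is established by essentially the same argument: average a projection onto an invariant subspace to get an equivariant complement, then induct on the dimension, using only that $|G|$ is invertible in the field.
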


    We will use this theorem to bound the lengths of the vector  spaces $\Lin \B^d$ for $\B \in \Bclass$. 
    For two vector spaces $V$ and $W$ equipped with a linear action of the same group $G$, let us write 
    \[
        \lineqfun{V}{W} = 
        \left\{
            \psi : V \to W
            \ \middle\vert\
            \text{ $\psi$ is linear and }
            \forall g \in G, \forall v \in V : g(\psi(g^{-1}(v))) = \psi(v)\
        \right\}
    \]
    for the set of all those linear maps from $V$ to $W$ that are equivariant with respect to the action of $G$. 
   This set is closed under taking linear combinations, and therefore it can be seen as a vector space. 
    In particular, it is meaningful to talk about the dimension of this space. 
    \begin{lemma}\label{lem:dim-bounds-length}
        Let $V$ be a finite-dimensional vector space over a field of characteristic zero, 
        equipped with a linear action of a finite group $G$. 
        Then 
        \begin{align*}
            \text{length of\, $V$} 
            \quad \leq \quad 
            \text{dimension of } \lineqfun{V}{V}.
        \end{align*}
    \end{lemma}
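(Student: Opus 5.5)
We combine Maschke's Theorem with the Jordan--Hölder Theorem and a dimension count of equivariant endomorphisms. First, by Maschke's Theorem, write $V = V_1 \oplus \cdots \oplus V_n$ with each $V_i$ an equivariant subspace of length $1$ (an irreducible representation). We then argue that the length of $V$ equals $n$. The direct sum gives a chain $0 \subset V_1 \subset V_1 \oplus V_2 \subset \cdots \subset V$ of length $n$, each successive quotient being isomorphic to some irreducible $V_i$. So this is a composition series. By the Jordan--Hölder Theorem for modules over the group algebra $\FF[G]$ (applicable since $V$ is finite-dimensional, hence of finite length), every strictly increasing chain of equivariant subspaces can be refined to a composition series, and all composition series have length $n$. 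Thus the length of $V$ is exactly $n$.

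Next, we bound $n$ by the dimension of $\lineqfun{V}{V}$. For each $i$, let $p_i \colon V \to V$ be the projection onto $V_i$ along $\bigoplus_{j\neq i} V_j$. Since both $V_i$ and the complementary sum are equivariant, each $p_i$ is linear and commutes with the action of $G$, so $p_i \in \lineqfun{V}{V}$. The maps $p_1,\ldots,p_n$ are linearly independent. Indeed, if $\sum_i \lambda_i p_i = 0$, apply this to a nonzero $v \in V_j$ (such a vector exists since a length-$1$ space is nonzero); this yields $\lambda_j v = 0$, so $\lambda_j = 0$. Hence $n \leq \dim \lineqfun{V}{V}$, which proves the lemma.

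No step here is a serious obstacle. The only point requiring care is justifying that the length equals $n$, i.e. that no chain is longer than the Maschke decomposition. This is exactly Jordan--Hölder for finite-length modules. Alternatively, one can argue directly: in characteristic zero every equivariant subspace has an equivariant complement, obtained by averaging a projection over $G$. A strict chain $U_0 \subset \cdots \subset U_m$ then yields a decomposition $V = U_0 \oplus C_1 \oplus \cdots \oplus C_m \oplus C'$ with nonzero equivariant $C_k$. Their projections are $m$ linearly independent elements of $\lineqfun{V}{V}$, which bounds $m$ directly and even avoids Jordan--Hölder. I would present this direct version, since the averaging argument is the same one underlying Maschke's Theorem, where characteristic zero guarantees that $|G|$ is invertible.
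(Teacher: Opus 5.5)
Your proof is correct. Your first argument is the paper's proof: Maschke's decomposition $V = V_1 \oplus \cdots \oplus V_n$, the fact that the length is $n$ (the paper cites additivity of length over direct sums, you cite Jordan--H\"older, which amounts to the same thing), and the $n$ linearly independent equivariant projections.

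The direct version you say you would present is a genuinely leaner route. You take an arbitrary strict chain $U_0 \subset \cdots \subset U_m$. Using only the averaging lemma (equivariant subspaces have equivariant complements when $|G|$ is invertible), you split $U_k = U_{k-1} \oplus C_k$ with each $C_k$ nonzero and equivariant. The projections onto the $C_k$ then give $m$ independent elements of $\lineqfun{V}{V}$.

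This needs neither the full Maschke decomposition nor Jordan--H\"older or additivity of length, so it is more self-contained. What it gives up is the exact value of the length: the paper's route shows the length equals the number of irreducible summands. That value is not needed for the inequality in the lemma.
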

    \begin{proof}
        Apply Maschke's Theorem, yielding a decomposition 
        \begin{align*}
        V = V_1 \oplus \cdots \oplus V_n,
        \end{align*}
        where the subspaces $V_1,\ldots,V_n$ are equivariant and of length $1$, with respect to the action of the group $G$. 
        The length is additive with respect to direct sums (see e.g.~\cite[Prop.~3.17]{RepTheoryTextbook}), i.e.
        \begin{align*}
            \text{length of $V$} 
            = \text{length of $V_1$} + \cdots + \text{length of $V_n$},
        \end{align*}
        so the length of $V$ is equal to $n$. 
        
        We will now consider the right-hand side of the inequality in the statement of the lemma.
        For every $i \in \set{1,\ldots,n}$, there is an equivariant linear map from $V$ to itself that 
        \begin{itemize}
            \item is the identity on $V_i$; and
            \item maps vectors from the other components to zero.
        \end{itemize}
        This gives us at least $n$ equivariant linear maps from $V$ to itself. 
        None of these maps can be spanned by the others, 
        and hence the dimension is no less than $n$. 
    \end{proof}

    Thanks to Lemmas~\ref{lem:reduce-to-finite-substructures} and~\ref{lem:dim-bounds-length}, 
    the length of the vector space $\Lin \A^d$ is bounded by the dimensions of the vector spaces
    \begin{align}\label{eq:endo-maps-b-d}
        \lineqfun{ \Lin \B^d}{\Lin \B^d}, 
        \tag{$*$}
    \end{align}
    where $\B$ ranges over the family $\Bclass$.
    To complete the proof of the theorem, it still remains to show that these dimensions are bounded by some number that depends only on $d$.
    We do so now.

    \begin{lemma}\label{lem:bound-on-dim-for-finite-set} 
        For every $\B \in \Bclass$, 
        the dimension of the vector space in~\eqref{eq:endo-maps-b-d} is 
        precisely the number of orbits in $\B^{2d}$.
    \end{lemma}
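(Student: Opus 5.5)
Since $\B$ is finite, $\Lin \B^d$ has the finite basis $\B^d$. So a linear map $\psi : \Lin \B^d \to \Lin \B^d$ is exactly the same data as a matrix $M : \B^d \times \B^d \to \field$, with entries $\psi(x) = \sum_{y} M(x,y)\, y$ for each basis vector $x \in \B^d$. The plan is to identify the equivariant maps with the invariant matrices. Then an explicit basis indexed by orbits in $\B^{2d} \cong \B^d \times \B^d$ gives the dimension count. Here $G$ is the automorphism group of $\B$, acting diagonally on pairs.

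First I will check that $\psi$ is equivariant exactly when $M(g x, g y) = M(x,y)$ for all $g \in G$ and $x,y \in \B^d$. Write out $g(\psi(x)) = \sum_y M(x,y)\, g y$ and reindex the sum by $y' = g y$; this gives $\sum_{y'} M(x, g^{-1} y')\, y'$. Comparing coefficients with $\psi(g x) = \sum_{y'} M(gx, y')\, y'$ shows that $g\psi = \psi g$ holds for all $g$ if and only if $M(gx, y') = M(x, g^{-1} y')$ for all $g$, $x$, $y'$. That is the stated invariance after substituting $y' = g y$. Both directions hold because $\B^d$ is a basis, so equivariance only needs to be checked on basis vectors.

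Second, the invariant functions $\B^d \times \B^d \to \field$ are precisely those constant on each $G$-orbit of $\B^d \times \B^d$. For each orbit $O$, let $\psi_O$ be the map whose matrix is the indicator function of $O$. These maps are equivariant by the first step. They are linearly independent because their matrices have disjoint nonempty supports. They span, since every invariant $M$ equals $\sum_O c_O \mathbf{1}_O$, where $c_O$ is the constant value of $M$ on $O$. Hence the dimension of~\eqref{eq:endo-maps-b-d} equals the number of $G$-orbits on $\B^d \times \B^d$.

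Finally, the diagonal action on $\B^d\times\B^d$ is exactly the componentwise action on $\B^{2d}$ under the obvious bijection, so the orbits correspond and the count is the number of orbits in $\B^{2d}$. I expect no real obstacle; the only care needed is the index bookkeeping with $g^{-1}$ in the first step. Note also that characteristic zero is not used here; it is only needed for Maschke's Theorem in Lemma~\ref{lem:dim-bounds-length}.
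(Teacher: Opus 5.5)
Your proof is correct and follows the same route as the paper: identify equivariant endomorphisms of $\Lin \B^d$ with $\Aut(\B)$-invariant matrices on $\B^d \times \B^d$, which have one free parameter per orbit of $\B^{2d}$. You also spell out two points the paper leaves implicit: the $g^{-1}$ reindexing, which gives both directions of the equivalence between equivariance and invariance, and the explicit basis of orbit indicators.
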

    \begin{proof}
        A linear map $\psi$ in the $\FF$-vector space~\eqref{eq:endo-maps-b-d} can be seen as a square matrix indexed by $\B^d$, 
        i.e.~a function
        \begin{align*}
            {\B^d \times \B^d} \to \field, 
            \quad
            \text{defined by } (x, y) \mapsto \lambda_{x,y} \text{ where } \psi(1 \cdot x) = \sum_{z} \lambda_{x,z} \cdot z.
        \end{align*}
        This function must be equivariant with respect to automorphisms of $\B$, 
        i.e.~for any automorphism $g$ and $x, y \in \B^d$ we must have $\lambda_{g(x),g(y)} = \lambda_{x, y}$.
        So to define such a function, we need to choose one element of $\field$ for each orbit of $\B^d \times \B^d$. 
        The dimension of the space in~\eqref{eq:endo-maps-b-d} is therefore equal to the number of orbits of $\B^{2d}$.
    \end{proof}
This number of orbits has a finite upper bound that depends only on $d$, by condition~(\ref{item:oligomorphic-approximation-orbits}) of Definition~\ref{def:oligomorphic-approximation}. 
This completes the proof of Theorem~\ref{thm:weak-smooth-approximation-finite-length}.
\end{proof}

The inequalities shown in Figure~\ref{fig:proof-summary-cogless} give us upper bounds on the length of $\Lin \A^d$. 
In the case of the equality atoms, this bound is the $2d$-th Bell number. 
In the case of the vector atoms from Example~\ref{ex:vector-space-atoms}, the bound is the number of linear dependency patterns for $2d$-tuples over a finite field. 
Such a pattern is described by: 
(a)~indicating a subset of the coordinates which is a basis for the tuple; 
and (b)~indicating the basis decompositions for the remaining coordinates. 
This can be done in at most $2^{(2d)^2}$ ways.
A similar upper bound can be obtained for the Rado graph by an analysis of orbits in symplectic graphs.

%% file: src/Duals.tex
\section{Function spaces and  weighted automata}\label{sec:duals}

The original motivation to introduce orbit-finitely spanned vector spaces in~\cite{BFKM24} was the study of weighted orbit-finite automata. 
In this section, we recall this motivation and discuss how it relates to our new results. 
This discussion also involves the issue of function spaces, arguably more fundamental, so we begin with that.
 
\subsection{Function spaces} 
Given two orbit-finitely spanned vector spaces $V$ and $W$ over the same atom structure, there are two natural ideas for a function space: 
the space of all linear maps from $V$ to $W$, and the subspace consisting of equivariant linear maps. 
As it turns out, the most relevant function space lies between them.

\begin{definition}[Finitely supported function space]\label{def:function-space}
    For two orbit-finitely spanned vector spaces $V$ and $W$, we define their \emph{finitely supported function space}, denoted by 
    \begin{align*}
     \linfsfun{V}{W},
    \end{align*}
    to be the vector space of linear maps $f$ that satisfy the following {finite support condition}: 
    there is some finite set of atoms $S \subset \A$ such that,
    for every automorphism $\pi$ of $\A$,
    \begin{align*}
        \forall a \in S : \pi(a)=a
        \quad \implies \quad 
        \forall v \in V : \pi(f(v)) = f(\pi(v)).
    \end{align*}
\end{definition}

This notion of finite support is the same as the one used in Section~\ref{sec:orbit-finite-sets}, 
here applied to the space of linear maps from $V$ to $W$. 
It is also the standard restriction used in the study of nominal sets~\cite[Thm.~2.19]{PittsAM:nomsns}.
As argued in~\cite[Sec.~8.3]{bojanczyk_slightly} on category-theoretic grounds, 
the finitely supported function space is the ``right'' function space.
We hence consider:

\begin{definition}[Function space property]\label{def:function-space-property}
    An atom structure $\A$ has the \emph{function space property} 
    if orbit-finitely spanned vector spaces over $\A$ are closed under taking finitely supported function spaces.
\end{definition}

The only structure known to have this property is the equality atoms~\cite[Thm.~8.16]{bojanczyk_slightly}.
If we only require function spaces of the form \[
    \linfsfun{V}{\field}
\] to be orbit-finitely spanned, we accordingly speak of the \emph{dual space property}. 
This weaker property is also satisfied by the ordered atoms~\cite[Cor.~6.8]{BFKM24} and, following \cite[Thm.~3.7]{Przybylek_2024}, by all $\omega$-stable oligomorphic structures~--~a strict subclass \cite[p.~440]{KLM89} of the smoothly approximated structures mentioned after Definition~\ref{def:oligomorphic-approximation}.
In contrast, the Rado atoms (which are not smoothly approximated but have the finite length property) fail to have even the dual space property, as we explain below.

\begin{example}\label{ex:no-function-spaces}
    Assume that $\A$ is the Rado atoms and the field $\FF$ is arbitrary~--~in particular, a field of characteristic zero as considered in Section~\ref{sec:characteristic-zero}.
    (A variant of this example for the two-element field was shown in~\cite[Ex.~6.9]{BFKM24}, when the finite length property had not been established yet.)
    We will show that
    \[
        \linfsfun{\Lin_\field \A}{\field}
    \]
    is not orbit-finitely spanned.
    This function space is easily seen to be in a linear equivariant bijection with the vector space
    \begin{align}\label{eq:dual-space}
        \fsfun{\A}{\field}
    \end{align}
    consisting of functions (not linear maps) from $\A$ to $\field$ that are finitely supported in the sense of Definition~\ref{def:function-space}.
    We will show that this space is not orbit-finitely spanned.
    
    For a finite set $S \subset \A$ of atoms, define a function $f_S : \A \to \field$ by
    \begin{align*}
        a \mapsto
        \begin{cases}
            1 & \text{if $a$ is a neighbour of all atoms in $S$}\\
            0 & \text{otherwise}.
        \end{cases}
    \end{align*}
    Define $V$ to be the subspace of~\eqref{eq:dual-space} that is spanned by the functions $f_S$, where $S$ ranges over finite sets of atoms.
    The spanning set $\set{f_S}_S$ is not orbit-finite, and
    as we prove in Appendix~\ref{sec:appendix-no-function-spaces}, 
    no orbit-finite spanning set can be found for this subspace.  
    But being orbit-finitely spanned is closed under taking subspaces, which follows from Theorem~\ref{thm:ACP} and the fact that the Rado atoms have the ascending chain property
    (by Theorem~\ref{thm:weak-smooth-approximation-finite-length} for fields of characteristic zero, and by the upcoming Corollary~\ref{cor:free-finite-length} for arbitrary fields). 
    Hence~\eqref{eq:dual-space} cannot be orbit-finitely spanned. 
\lipicsEnd\end{example}

\subsection{Weighted automata} 
We now explain how the issues with function spaces have an impact on the theory of weighted automata. 
There are several ways of defining these; 
we choose one viewing them as deterministic automata, in which the states are endowed with a vector space structure.  

\begin{definition}\label{def:orbit-finite-weighted}
    An \emph{orbit-finite weighted automaton} is given by: 
    \begin{itemize}
        \item $\Sigma$, an orbit-finite set (the alphabet);
        \item $Q$, an orbit-finitely spanned vector space (of states);
        \item $q_0$, an equivariant element of $Q$ (the initial state);
        \item $\delta$, an equivariant function of type $Q \times \Sigma \to Q$ (the transition function) that becomes a linear map from $Q$ to itself after fixing any  letter from $\Sigma$;
        \item $F$, an equivariant linear map of type $Q \to \field$ (the output map).
    \end{itemize}
    Such an automaton computes a function of type $\Sigma^* \to \field$, defined in the same way as for deterministic automata. 
\end{definition}

From the finite length property of the underlying atom structure, we can deduce decidability results about orbit-finite weighted automata. 
(Though, to be able to effectively represent the maps $\delta$ and $F$, we should assume that $Q$ has an orbit-finite basis.)
For example, a nondeterministic algorithm for non-equivalence is derived in~\cite[Sec.~5]{BFKM24}. 
In light of our present results, such an algorithm exists if we use the Rado graph as the atoms~--~or any other structure with the finite length property. 
Also, weighted automata can be minimized~\cite[Sec.~7]{BFKM24}; 
the bound for chains allows us to decide whether two input words are syntactically congruent.

However, certain other results on weighted automata depend on the function space property. 
Let us give one such example. 
Our definition of weighted automata is deterministic in the left-to-right direction. 
One could also imagine a model where the input word is processed right-to-left. 
Are these equivalent? 
If the atoms have the function space property and the ascending chain property, then one can introduce a symmetric model based on monoids (or more precisely, unital associative $\field$-algebras) to show that 
the left-to-right and right-to-left variants of weighted automata are equivalent: see, essentially,~\cite[Thm.~7.4]{BFKM24}. 
However, as we show in the following example, this equivalence can fail without the function space property.

\begin{example}
    Consider the Rado atoms and any field. 
    We shall prove that the left-to-right and right-to-left variants of Definition~\ref{def:orbit-finite-weighted} are not equivalent. 
    The counterexample is the characteristic function of the language ``the first letter is adjacent to all later ones'', 
    i.e.~the function $f$ defined by 
    \begin{align*}
        a_1 \cdots a_n \in \A^* \quad \mapsto \quad 
        \begin{cases}
            1 & \text{if $a_1$ is adjacent to all of $a_2,\ldots,a_{n}$}\\
            0 & \text{otherwise (e.g.~if $n = 0$).}
        \end{cases}
    \end{align*}
    We will show that this function is computed by a left-to-right orbit-finite weighted automaton, but not by a right-to-left one. 
    
    To prove this, for an input word $w \in \A^*$, we define two functions of type $\A^* \to \field$ as follows:
    \begin{align*}
        \myunderbrace{u \mapsto f(wu)}{left derivative}
        \quad \text{and} \quad 
        \myunderbrace{u \mapsto f(uw).}{right derivative}
    \end{align*}
    Using the usual Myhill--Nerode construction, 
    one can show that a function is computed by a left-to-right orbit-finite weighted automaton 
    if and only if (the space spanned by) the set of its left derivatives is orbit-finitely spanned~\cite[Thm.~6.19]{aliceBob}, 
    and similarly for its right derivatives. 

    The set of left derivatives is already orbit-finite: 
    there is one left derivative for each $a \in \A$, plus $f$ itself and one extra derivative for the always-zero function. 
    Hence the set of left derivatives is certainly orbit-finitely spanned.
    On the other hand, the set of right derivatives restricted to $\A$ is precisely the spanning set of the vector space $V$ from Example~\ref{ex:no-function-spaces}; 
    that space is not orbit-finitely spanned.
\lipicsEnd\end{example}

%% file: src/Rado-cogs.tex
\section{Finite length from free amalgamation with a generic order}\label{sec:free-amalg}
\renewcommand{\cal}{\mathcal}

As we saw in Non-example~\ref{non-ex:weak-smooth-approximation}, the ordered atoms do not have oligomorphic approximation. 
Nonetheless they do have the finite length property over any field~\cite[Thm.~4.8]{BFKM24}, not just over those of characteristic zero. 
In this section, we shall generalise that result to a wide class of structures. 
In particular, we will deduce the finite length property of the Rado atoms and its variants.

The structures that we consider here are Fraïssé limits satisfying certain conditions, which are defined by the underlying amalgamation classes.
We shall now state our assumptions and results, with proofs in Section~\ref{sec:cogs-turn}.

\begin{description}
    \item[Graph vocabulary.] 
    Consider a finite relational vocabulary $\sigma_0$ consisting of unary and binary relations only. 
    This allows us to talk about, in essence, directed graphs with coloured vertices and edges.

    \item[Free amalgamation class.] 
    Let $\Cclass_0$ be a \emph{free} amalgamation class of finite $\sigma_0$-structures. 
    The precise definition may be found in~\cite[Sec.~2.1]{Macpherson11}, 
    but informally it means that when we perform amalgamation, we do not need to glue together any new elements or introduce new relations.
    Here is a useful characterisation~\cite[Lem.~4.5]{SS20} of when an amalgamation class $\Cclass_0$ is free.
    Knowing that $\Cclass_0$ is closed under substructures and isomorphisms, 
    we see that it consists of all the finite $\sigma_0$-structures which do not embed any structure from $\Fclass$, 
    where $\Fclass$ consists of all the minimal (with respect to substructures) finite $\sigma_0$-structures that do not belong to $\Cclass_0$;
    we write \[
        \Cclass_0 = \Forb(\Fclass).
    \]
    That $\Cclass_0$ is a free amalgamation class means that in each $F \in \Fclass$, 
    every two elements $x, y$ are either equal or satisfy at least one of $R(x, y)$ and $R(y, x)$ for some binary relation $R \in \sigma_0$; in that case we will say that $x$ and $y$ are \emph{related}.
    Conversely, and conveniently, given a family $\Fclass$ of finite $\sigma_0$-structures where every two elements are related,
    the class $\Forb(\Fclass)$ of finite $\sigma_0$-structures is a free amalgamation class.    

\end{description}

\begin{example}\label{ex:free-amalg-equality}
    Let $\sigma_0$ be empty. 
    Then $\Cclass_0 = \Forb(\{\})$ is a free amalgamation class consisting of all finite pure sets.
\lipicsEnd\end{example}

\begin{example}\label{ex:free-amalg-graphs}
    Let $\sigma_0$ consist of a single binary relation $E$. 
    A (i)~\emph{graph}, (ii)~\emph{digraph}, (iii)~\emph{tournament} is a $\sigma_0$-structure where $E$ is irreflexive and (i)~symmetric, (ii)~antisymmetric, (iii)~antisymmetric and total.
    For instance, the ordered atoms can be viewed as a tournament once we interpret $E$ as the order relation.

    For more examples, consider $\sigma_0$-structures:
    \[\arraycolsep=20pt\def\arraystretch{1.2}
    \begin{array}{ccccc}
    	\xymatrix@C=15pt{\bigdot\ar@(d,r)[]}
	&
	\quad\xymatrix@C=15pt{\bigdot&\bigdot}
	&
	\xymatrix@C=15pt{\bigdot\ar[r]&\bigdot}
	&
	\xymatrix@C=15pt{\bigdot\ar@<.5ex>[r]&\bigdot\ar@<.5ex>[l]}
	&
	\vcenter{\xymatrix@C=15pt@R=6pt{\bigdot\ar@<.5ex>[dd]\ar@<.5ex>[rd] \\ &\bigdot\ar@<.5ex>[lu]\ar@<.5ex>[ld] \\ \bigdot\ar@<.5ex>[uu]\ar@<.5ex>[ru]}} \\
	\quad A & \quad B & C & K_2 & K_3
	\end{array}
    \]
    %
%
    Then:
    \begin{itemize}
        \item 
        $\Forb(\{A, C\})$ is a free amalgamation class and consists of all finite graphs;
        
        \item
        $\Forb(\{A, C, K_3\})$ is a free amalgamation class and consists of all finite triangle-free graphs;
        
        \item 
        $\Forb(\{A, K_2\})$ is a free amalgamation class and consists of all finite digraphs;

        \item 
        $\Forb(\{A, B, K_2\})$ consists of all finite tournaments. 
        It is an amalgamation class,
        but not a free one: 
        in the minimal forbidden structure $B$, the two elements are not related.
        \lipicsEnd
    \end{itemize}
\end{example}

\begin{description}
    \item[Irreflexivity.] 
    For technical reasons, we restrict attention to classes $\Cclass_0$ where all structures are {\em irreflexive}, i.e. such that if $R(x,y)$ then $x\neq y$ for each binary relation $R$.
    This does not lose generality, since we can replace every binary relation $R$ in the vocabulary with its irreflexive fragment $R_{\neq}(x,y)$ and a unary relation $R_=(x)$; see~\cite[Sec.~2.4]{Herwig1998} or~\cite[p.~121]{SS20}.
    All examples considered here are already irreflexive.
    
    \item[Generically ordered expansion.] 
    Let $\sigma$ consist of $\sigma_0$ together with a new binary relation $<$.
    Consider the class $\Cclass$ of $\sigma$-structures obtained from $\Cclass_0$ 
    by interpreting $<$ in any $\sigma_0$-structure there as any total order.
    Note that $\Cclass$ is an amalgamation class.
    Indeed, let $X, Y_1, Y_2$ be $\sigma$-structures in $\Cclass$ with $X \subseteq Y_1 \cap Y_2$.
    Then we can amalgamate $Y_1, Y_2$ over $X$ as $\sigma_0$-structures and as $\{<\}$-structures, 
    both using the disjoint union of $Y_1, Y_2$ over $X$ as the underlying set. 
    Superposing these relations gives a $\sigma$-structure in $\Cclass$, which is the desired amalgamation.
    (Because of the total order, this amalgamation in $\Cclass$ is not free except in trivial cases.)
    Denote the Fraïssé limit of $\Cclass$ by $\A$.    
\end{description}

Our main result of this section is that this ordered structure $\A$ has the finite length property over any field.
For clarity, we summarise our assumptions:
\begin{theorem}\label{thm:ordered-free-amalg-has-finite-length}
    Consider:
    \begin{itemize}
        \item $\sigma_0$\quad a finite, at-most-binary relational vocabulary;
        \item $\Cclass_0$\quad a free amalgamation class of finite, irreflexive $\sigma_0$-structures;
        \item $\Cclass$\quad all total orderings of all structures in $\Cclass_0$, over the vocabulary $\sigma_0 \uplus \{<\}$;
        \item $\A$\quad the Fraïssé limit of $\Cclass$~--~a homogeneous and oligomorphic $(\sigma_0 \uplus \{<\})$-structure.
    \end{itemize}
    Then the totally ordered structure $\A$, even with finitely many constants named (i.e.~added as unary relations to the vocabulary), has the finite length property over any field.
\end{theorem}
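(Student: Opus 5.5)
We generalise the ordered-atoms argument of~\cite[Thm.~4.8]{BFKM24}, arguing by induction on the dimension $d$ and on the number of named constants. By Theorem~\ref{thm:ACP} it suffices to bound the length of $\Lin_\field X$ for $X$ a single orbit of $\A^d$ (possibly over a finite set $C$ of constants). Since $\A$ is homogeneous, such an orbit is a quantifier-free type of $d$ distinct elements. The order then lets us list the coordinates increasingly, so $X$ is determined by a finite labelled ordered structure in $\Cclass$ together with its relations to $C$.

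\textbf{Step 1: restriction to ``extremal'' coordinate.} Take the largest coordinate $x_d$ of a tuple in $X$. The key structural feature we use is that, over $\bar a=(x_1,\dots,x_{d-1})$ and $C$, the $1$-types of $x_d$ realised above $\max\bar a$ form a set whose realisations are essentially generic. By free amalgamation, $x_d$ may be chosen unrelated (in $\sigma_0$) to anything we like, subject only to the type. In particular, we get ``cog'' operators: for an equivariant subspace $V\subseteq \Lin X$, consider the map sending a vector to formal differences obtained by replacing the last coordinate by another realisation of the same type over $\bar a C$. Following \cite{BFKM24}, we define for $V$ its \emph{leading part}: the subspace of $\Lin Y$, with $Y$ the projection of $X$ to the first $d-1$ coordinates together with a named parameter, obtained by fixing $x_d$ to a new constant $c$. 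We also define its \emph{derivative}: the subspace obtained by taking differences in the last coordinate. Both are equivariant over a structure with one more constant, or of lower dimension, so induction applies.

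\textbf{Step 2: injectivity of the invariant.} We show that the assignment $V\mapsto(\text{leading part},\text{derivative})$ is strictly monotone. That is, if $V\subsetneq V'$ then one of the two components grows strictly. This yields that the length of $\Lin X$ is at most the sum of the lengths of finitely many spaces of lower complexity, which closes the induction. The complexity measure is lexicographic: dimension first, with named constants allowed to grow while dimension drops. The base case $d=0$ is the one-dimensional space $\Lin\{\ast\}$. Naming constants preserves the hypotheses, because $\A$ with constants named is again a structure of the same kind over the types of the constants, and free amalgamation relative to the constants still holds. This is exactly why the theorem is stated with constants.

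\textbf{Main obstacle.} Step~2 is where the work lies. The issue is a vector $v\in V'\setminus V$ whose derivative and leading part both lie in those of $V$. We must show that subtracting suitable elements of $V$ kills $v$. The plan is to use the order to pick a fresh value for the maximal coordinate beyond all atoms in the support of $v$. We then use free amalgamation to choose it unrelated to the rest, i.e.\ having the ``free'' type over everything except where forced by the orbit. Doing this exhibits $v$ as (a translate of) its leading part plus a telescoping sum of derivatives. The binary restriction on $\sigma_0$ is essential here: the type of the new point over the support is determined by its pairwise relations. Hence a generic such point exists, and the telescoping can be performed one atom at a time. I expect handling the relations that the orbit $X$ forces between $x_d$ and earlier coordinates to be the delicate case. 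For this I would first try replacing $X$ by the orbit with those coordinates promoted to constants.
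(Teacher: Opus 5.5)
\paragraph{Step 2 is the whole theorem, and as set up it is false.}
Your Step~2 is not a detail to fill in later. It carries all the difficulty of the theorem, and as you set it up it is false. Suppose the lower-dimensional component is the projection $V|^{-d}$ (summing out $x_d$) and the leading part is $\{v^{d:c}|^{-d} : v\in V\}$. Then injectivity already fails for the ordered atoms with $d=2$. Take $X=\{(x,y) : x<y\}$ and the proper equivariant subspace $W=\{v : \sum_y v(a,y)=\sum_x v(x,a)\ \text{for all } a\}$ of $\Lin X$. It contains $v=(0,1)-(0,2)+(1,2)$. The projection of $v$ is the single atom $1$, and its fibre over $x_2=1$ is the single atom $0$. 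By equivariance, $W$ and $\Lin X$ therefore have the same projection and the same leading part.

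The split that works is $V\mapsto (V|^{-d},\ \text{leading part of } V\cap\ker(-)|^{-d})$. Everything then rests on injectivity of the leading part on equivariant subspaces of $\ker(-)|^{-d}$. For $d=2$, given the easy one-dimensional case, this injectivity already forces every balanced vector (one killed by all coordinate projections) to be a linear combination of cogs $\sum_{J}(-1)^{|J|}\,a|^{I\setminus J}b|^J$. That is exactly the paper's main technical result, Theorem~\ref{thm:cog-span-generally}.

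\paragraph{Why the generic fresh point does not exist.}
To make progress, the fresh top point $z$ must serve as last coordinate for several fibres at once. If each prefix or fibre gets its own fresh point, the telescoping only renames last coordinates, and the number of distinct last coordinates never drops. So $z$ must satisfy $pz\in X$, with all the relations the orbit forces, for every prefix $p$ in the support simultaneously. These requirements can fail in two ways:
\begin{itemize}
\item They can be inconsistent, because one atom occurs in different coordinate positions in different tuples (an \emph{equational conflict}).
\item They can be pairwise consistent but create a forbidden substructure (a \emph{relational conflict}).
\end{itemize}
The paper's extended example (Section~\ref{ex:extended-example}: ordered triangle-free graph, increasing adjacent pairs) has both. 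There $z$ would have to be adjacent to the adjacent atoms $a$ and $b$, and $g$ is both a first and a second coordinate.

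Binarity only tells you that the type of $z$ is determined by pairwise data. It does not tell you that the data is realisable. Free amalgamation only lets you free the pairs the orbit does not constrain. Promoting coordinates to constants does not help, since the clash is between different tuples of the support.

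\paragraph{What is missing.}
You need the paper's conflict resolution. First subtract auxiliary cogs built from fresh copies (Lemma~\ref{lem:free-fresh}) to move conflicting locations away, by induction on the number of equational and then relational conflicts. Only for a conflict-free vector is a common $z$ built. Its existence is checked by mapping any forbidden $F\ni z$ into $\A_0$ via $z\mapsto a$, $x_i\mapsto a^+_i$. That check uses conflict-freeness, binarity, and the fact that forbidden structures are pairwise related.

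A smaller point: $\A$ with constants named is not itself of the same kind. ``At most one point in a constant's predicate'' is not a free-amalgamation condition. Work instead relative to a fixed finite $S$, with $S$-ordered orbits, as the paper does.
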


\begin{remark}
    We do not know how to drop the arity restriction from our proofs, 
    so it is interesting that Evans recently developed a Ramsey-theoretic approach in~\cite{Evans_pm1} which could be used to show that $\Lin_\field \A^2$, 
    for free amalgamation classes over vocabularies of arbitrarily high arity, has finite length.
    We do not know how to combine these results.

    We also do not know whether the finite length property of a general structure $\A$ implies the finite length property of the same structure with finitely many constants fixed.
    There is a quick, positive answer for the equality and the ordered atoms~\cite[Thm.~4.10]{BFKM24},
    but that argument relies on a property that these atoms have~\cite[Lem.~2.21]{BodirskyBodorMarimon_25} and the Rado atoms lack~\cite{no-constant-interpreted-in-Rado}.
\lipicsEnd\end{remark}

Before we prove Theorem~\ref{thm:ordered-free-amalg-has-finite-length} in Section~\ref{sec:cogs-turn}, let us state an easy consequence of it and give some examples (with more details in Appendix~\ref{sec:appendix-lachlan-woodrow}).
Note that the Fraïssé limit $\A_0$ of $\Cclass_0$ is a first-order reduct of $\A$.
To see this, notice that $\A$, when viewed as a $\sigma_0$-structure, embeds the same finite $\sigma_0$-structures as $\A_0$~--~namely, $\Cclass_0$.
Moreover, it follows from a back-and-forth argument~\cite[Lem.~7.1.4]{hodges1993model} that the $\sigma_0$-structure $\A$ is also homogeneous and therefore isomorphic to $\A_0$.
So we may assume that $\A$ and $\A_0$ have the same underlying set; we then have $\Aut(\A) \subseteq \Aut(\A_0)$, where $\Aut(X)$ denotes the group of automorphisms of $X$.
We thus call $\A$ the \emph{generically ordered expansion} of $\A_0$, or simply the \emph{ordered $\A_0$}.
\begin{corollary}\label{cor:free-finite-length}
    The reduct $\A_0$ of $\A$, even with finitely many constants fixed, has the finite length property over any field.
\end{corollary}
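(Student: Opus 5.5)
We derive the corollary from Theorem~\ref{thm:ordered-free-amalg-has-finite-length} by a reduct argument. Identify the underlying sets of $\A$ and $\A_0$ as explained above, so that $\Aut(\A) \subseteq \Aut(\A_0)$. Fix finitely many constants $c_1,\dots,c_k$. Write $\A_0^{\bar c}$ and $\A^{\bar c}$ for the structures $\A_0$ and $\A$ with these constants named. Then $\Aut(\A^{\bar c})$ is the pointwise stabiliser of $\bar c$ in $\Aut(\A)$. It is contained in the pointwise stabiliser of $\bar c$ in $\Aut(\A_0)$, which is $\Aut(\A_0^{\bar c})$. Both structures are oligomorphic: naming finitely many constants preserves oligomorphicity, because orbits of $d$-tuples under the stabiliser correspond to orbits of $(d+k)$-tuples extending $\bar c$. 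In light of Theorem~\ref{thm:ACP}, it suffices to bound the length of $\Lin_\field \A_0^d$ for each $d$, with the group $\Aut(\A_0^{\bar c})$ acting. The underlying vector space is literally the same as $\Lin_\field \A^d$, with the group $\Aut(\A^{\bar c})$ acting.

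The key observation is monotonicity of equivariance under shrinking the group. Every subspace of $\Lin_\field \A_0^d$ that is equivariant for the larger group $\Aut(\A_0^{\bar c})$ is in particular equivariant for the subgroup $\Aut(\A^{\bar c})$. Hence any strictly increasing chain of $\Aut(\A_0^{\bar c})$-equivariant subspaces is also a strictly increasing chain of $\Aut(\A^{\bar c})$-equivariant subspaces of $\Lin_\field \A^d$. Its length is therefore bounded by the finite bound on the length of $\Lin_\field \A^d$ over $\A^{\bar c}$. That bound exists by Theorem~\ref{thm:ordered-free-amalg-has-finite-length}, applied to the ordered structure with the same constants named.

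To pass from $\A_0^d$ to an arbitrary orbit-finite set $X$ over $\A_0^{\bar c}$, we use the remark after Definition~\ref{def:finite-length-property}. Equivalently, we realise $X$ as a quotient of an equivariant subset of some power, and note that chains in $\Lin X$ pull back to chains of the same length in $\Lin$ of that power. The one point to check, which I expect to be the only mild obstacle, is that $\A_0^d$ with the $\Aut(\A_0^{\bar c})$-action is also an orbit-finite set over $\A^{\bar c}$. This holds because its action is the restriction along the group inclusion, and $\A^{\bar c}$ is oligomorphic. The argument therefore goes through with no further calculation.
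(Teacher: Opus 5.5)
Your proposal is correct and is essentially the paper's own argument: a chain of subspaces of $\Lin_\FF \A_0^d = \Lin_\FF \A^d$ that are $\Aut(\A_0/\bar c)$-equivariant (i.e.\ supported by the named constants in $\A_0$) is also a chain of $\Aut(\A/\bar c)$-equivariant subspaces, so Theorem~\ref{thm:ordered-free-amalg-has-finite-length} bounds its length. Your extra remarks make explicit what the paper leaves implicit: that oligomorphicity survives naming constants, and that one passes from $\A_0^d$ to an arbitrary orbit-finite $X$ by pulling chains back along $\Lin Y \to \Lin X$.
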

\begin{proof}
    A chain of subspaces in $\Lin_\FF \A_0^d = \Lin_\FF \A^d$ each supported by $S \subset \A_0$ is also a chain of subspaces supported by $S \subset \A$; 
    the latter has a bounded length by the theorem above.
\end{proof}

\begin{example}\label{ex:generically-ordered-equality}
    Continuing from Example~\ref{ex:free-amalg-equality},
    the ordered atoms (Example~\ref{ex:order-atoms}) are the generically ordered expansion of the equality atoms (Example~\ref{ex:equality-atoms}).
\lipicsEnd\end{example}

\begin{example}\label{ex:generically-ordered-graphs}
    Continuing from the first two items of Example~\ref{ex:free-amalg-graphs}, we obtain generically ordered expansions of the Rado graph and of \emph{Henson's triangle-free graph}.
    The ordered Rado graph was studied in~\cite{BPP15} along with its first-order reducts (e.g.~the Fraïssé limit of all finite tournaments). The ordered triangle-free graph will be studied in an extended example in Section~\ref{ex:extended-example}.
\lipicsEnd\end{example}

\begin{corollary}\label{cor:lachlan-woodrow}
    The following structures have the finite length property over any field:
    \begin{romanenumerate}
        \item the ordered atoms and the equality atoms;
        \item all three \cite{Lachlan1984} countable homogeneous tournaments;
        \item all of the countably many \cite{LachlanWoodrow_80} countable homogeneous graphs, including the Rado graph and Henson's triangle-free graph;
        \item uncountably many \cite[Thm.~2.4]{Henson1972} countable homogeneous digraphs obtained by forbidding tournaments.
    \end{romanenumerate}
\end{corollary}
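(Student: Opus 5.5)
The plan is to derive each item from Theorem~\ref{thm:ordered-free-amalg-has-finite-length} or Corollary~\ref{cor:free-finite-length}, together with one general transfer principle. If $\A'$ is a first-order reduct of a structure that has the finite length property, possibly with finitely many constants named, then $\A'$ has it as well. The reason is that $\Aut$ of the richer structure is contained in $\Aut(\A')$, so every $\Aut(\A')$-equivariant subspace of $\Lin_\FF \A'^d$ is also equivariant for the richer structure. Hence chains for $\A'$ are chains for the richer structure, and their length is bounded.

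More generally, suppose $\A'$ is first-order interpretable, or at least has its automorphism group containing (up to a finite-index or pointwise-stabiliser twist) that of some structure with finitely many constants named. Then orbit-finite sets over $\A'$ are orbit-finite over that structure, and the same inclusion argument applies. This is why the constants clause in Theorem~\ref{thm:ordered-free-amalg-has-finite-length} is stated: several homogeneous structures arise as definable pieces of a free amalgamation limit after fixing a point.

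For (i), Example~\ref{ex:generically-ordered-equality} says the ordered atoms are the generically ordered expansion of the equality atoms. The theorem then covers the ordered atoms, and Corollary~\ref{cor:free-finite-length} covers the equality atoms.

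For (ii), Lachlan's list of tournaments is the dense linear order, the random tournament, and the local order $S(2)$.
\begin{itemize}
\item The dense linear order is handled in (i).
\item The random tournament is a reduct of the ordered Rado graph: orient $x\to y$ iff ($x<y$ and $E(x,y)$) or ($y<x$ and $\neg E(x,y)$). One checks via the extension property that this definable tournament is the random tournament.
\item $S(2)$ is definable in the ordered atoms with a constant named, or alternatively in $(\Q,<)$ via the circular order. It is a reduct of the dense circular order, which itself is definable from $(\Q,<)$.
\end{itemize}

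For (iii), the Lachlan--Woodrow list consists of disjoint unions $I_m\otimes K_n$ of complete graphs and their complements, the Rado graph, the Henson graphs $H_n$ (forbidding $K_n$), and the complements of all of these. Complementation preserves the automorphism group, so it suffices to treat one graph from each complementary pair.
\begin{itemize}
\item The Rado graph and $H_n$ are limits of free amalgamation classes $\Forb(\{A,C,K_n\})$, so Corollary~\ref{cor:free-finite-length} applies.
\item The graphs $I_m\otimes K_n$, with $m,n\le\omega$, are reducts of the equality atoms (with finitely many constants, or over a finite set) and are handled via (i). For finite $m$, use $\omega\times m$, which is interpretable over equality atoms with $m$ constants. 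For $m=\omega$, use a quotient structure, equivalently the free amalgamation limit of an equivalence-like unary/binary class: take the binary relation ``non-equivalent" forbidding the $3$-element pattern two related plus one unrelated. Note that this pattern is not pairwise related, so the class is not free. Instead I would interpret $\omega\times\omega$ in the equality atoms via pairs, and use that orbit-finite sets over an interpretable structure are orbit-finite over the base.
\end{itemize}

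For (iv), Henson's digraphs are $\Forb(\Fclass)$ for arbitrary sets $\Fclass$ of finite tournaments. Tournaments are pairwise related, so each such class is a free amalgamation class over the single binary relation $E$, and Corollary~\ref{cor:free-finite-length} applies.

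The main obstacle is (ii) and the imprimitive graphs in (iii). These are not themselves free amalgamation limits, so for each one I must exhibit a concrete interpretation inside a covered structure and check that it really yields the intended homogeneous structure. I expect $S(2)$ and $\omega\times\omega$ to need the most care; for both I would try interpretations over $(\Q,<)$ or the equality atoms on pairs first.
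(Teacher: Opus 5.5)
\paragraph{Assessment.} Items (i) and (iv), the random tournament in (ii), and the Rado and Henson graphs in (iii) follow the paper's argument. A few details in those items need tidying:
\begin{itemize}
\item In (iv) you must also forbid the loop and the $2$-cycle. Both are still pairwise related, so the class stays free.
\item For the imprimitive graphs $I_m\otimes K_n$ you use explicit interpretations in the equality atoms, such as $\A\times\{1,\dots,m\}$, or non-repeating pairs grouped by their first entry. The paper instead cites an interpretation in the ordered atoms. Your route is fine, but these are interpretations, not reducts.
\item The case $m=\omega$ with $n$ finite needs $\A\times\{1,\dots,n\}$ grouped by first coordinate.
\end{itemize}

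\paragraph{The gap: the dense local order $S(2)$ in (ii).} Neither of your routes works.
\begin{itemize}
\item $S(2)$ is not a reduct of the dense circular order. That structure's only relation is ternary, so by homogeneity its automorphism group is $2$-transitive, and it defines no tournament at all. The inclusion goes the other way: the circular order is a reduct of $S(2)$.
\item $S(2)$ is not a reduct of $(\Q,<,\bar c)$ for finitely many constants $\bar c$. In that structure the non-constant points split into finitely many open intervals. Any $\Aut(\Q,<,\bar c)$-invariant tournament must linearly order each interval and relate it uniformly to every point outside it. In $S(2)$, however, take $p\to q$. Any point on the short arc from $p$ to $q$, or on the arc from $q$ to the antipode of $p$, relates non-uniformly to $\{p,q\}$. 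So such a linearly ordered module containing two points must absorb all non-constant points, which is impossible because $S(2)$ minus finitely many points still contains $3$-cycles.
\end{itemize}
Your fallback, ``try interpretations over $(\Q,<)$'', is left open, so (ii) is not established.

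\paragraph{The missing idea.} Use a unary vocabulary, which Theorem~\ref{thm:ordered-free-amalg-has-finite-length} allows. Take $\sigma_0=\{P\}$ and let $\Cclass_0$ be all finite $\{P\}$-structures; nothing is forbidden, so the class is free. On $\mathbb{S}=\{e^{ir}\mid r\in\Q\}$, let $P$ be the open right half-circle and define
\[
x<y \iff x\neq y \wedge \bigl((P(x)\Leftrightarrow P(y))\iff x\to y\bigr).
\]
This $<$ orders $P$ by argument and $\neg P$ by the argument of the antipode, merged into a single dense linear order in which $P$ is dense and codense. Hence $(\mathbb{S},P,<)$ is the generically ordered expansion of $(\mathbb{S},P)$, and it has the finite length property by the theorem. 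The local order is recovered by the symmetric formula
\[
x\to y \iff x\neq y\wedge\bigl((P(x)\Leftrightarrow P(y))\iff x<y\bigr).
\]
So $S(2)$ is a reduct of $(\mathbb{S},P,<)$ and inherits the property.
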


\section{How the cogs turn: proof of Theorem~\ref{thm:ordered-free-amalg-has-finite-length}}\label{sec:cogs-turn}
To prove Theorem~\ref{thm:ordered-free-amalg-has-finite-length} we will proceed in several steps, 
with the general idea similar to~\cite[Sec.~4.1]{BFKM24}, 
but with significant new complications arising from the presence of non-trivial relations in $\A_0$.
Throughout we fix $\A$ from the statement of Theorem~\ref{thm:ordered-free-amalg-has-finite-length}.

\subsection{Orbits and projections}
To start with, let us view $\A^d$ as $\A^{\{1, \dots, d\}}$. 
More generally, it will be convenient to consider $\A^I$ for a finite totally ordered indexing set $I$ 
(say contained in $\Q$, so that we may take unions later).
Fix a finite support $S \subset \A$.
We shall say that a tuple $a\in\A^I$ is {\em ($S$-)ordered} if $a_i \not\in S$ for all $i$, and $a_i < a_j$ whenever $i < j$. 
Then the orbit $\cal O = \Aut(\A/S) \cdot a$ only contains $S$-ordered tuples, 
and we shall call the orbit $S$-ordered as well.%
\footnote{Here and in the following, $\Aut(\A/S)$ is the (sub)group of those automorphisms of $\A$ that fix every element of $S$. 
So ``$\Aut(\A/S)$-equivariant'' is synonymous with ``supported by $S$''.}
If $a$ is not $S$-ordered, by removing the entries that repeat or come from $S$ and reordering the rest, 
we can always find an $\Aut(\A/S)$-equivariant bijection from $\cal O$ to an $S$-ordered orbit.

To study the lengths of orbit-finitely spanned spaces, we may focus on a single ordered orbit at a time:
\begin{claim}\label{claim:reduction-to-one-orbit}
    The following are equivalent for every finite $S \subset \A$:
    \begin{alphaenumerate}
        \item 
        For each $d \in \{1, 2, \dots\}$, there is some number $l_d$ such that chains of $\Aut(\A/S)$-equivariant subspaces in $\Lin_\FF \A^d$ have length at most $l_d$;
        
        \item 
        For each $d \in \{1, 2, \dots\}$ and every $S$-ordered orbit $\cal O \subseteq \A^d$,
        $\Lin_\FF \cal O$ has finite length with respect to $\Aut(\A/S)$.
    \end{alphaenumerate}
\end{claim}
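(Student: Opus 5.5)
For (a)$\Rightarrow$(b), fix $d$ and an $S$-ordered orbit $\cal O \subseteq \A^d$. Since $\cal O$ is an $\Aut(\A/S)$-invariant subset of $\A^d$, the space $\Lin_\FF \cal O$ is an $\Aut(\A/S)$-equivariant subspace of $\Lin_\FF \A^d$. Hence any chain of $\Aut(\A/S)$-equivariant subspaces of $\Lin_\FF \cal O$ is also such a chain in $\Lin_\FF \A^d$, and its length is at most $l_d$.

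For (b)$\Rightarrow$(a), fix $d$. The first step is to note that $\A^d$ splits into finitely many $\Aut(\A/S)$-orbits $\cal O_1,\dots,\cal O_k$. This holds because $\A$ is oligomorphic: orbits of $\Aut(\A/S)$ on $\A^d$ correspond to orbits of $\Aut(\A)$ on $\A^{d+|S|}$ containing a fixed enumeration of $S$. Consequently
\begin{align*}
  \Lin_\FF \A^d = \Lin_\FF \cal O_1 \oplus \cdots \oplus \Lin_\FF \cal O_k
\end{align*}
as $\Aut(\A/S)$-representations.

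Length, meaning the supremum of lengths of chains of equivariant subspaces, is additive on direct sums. More robustly, it is subadditive along short exact sequences. Given a chain $V_0 \subset \cdots \subset V_n$ in $U \oplus W$ with $\pi \colon U\oplus W \to W$ the projection, consider the chains $V_i \cap U$ in $U$ and $\pi(V_i)$ in $W$. At each strict step $V_{i-1}\subset V_i$, at least one of these two chains strictly grows, since otherwise $V_{i-1}=V_i$. So $n \le \mathrm{len}(U)+\mathrm{len}(W)$. Inducting over the $k$ summands gives
\begin{align*}
  \mathrm{len}\,\Lin_\FF \A^d \le \sum_j \mathrm{len}\,\Lin_\FF \cal O_j.
\end{align*}
It then suffices to show that each summand has finite length.

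For an arbitrary orbit $\cal O_j$, use the remark preceding the claim. There is an $\Aut(\A/S)$-equivariant bijection from $\cal O_j$ to an $S$-ordered orbit $\cal O'$ in some $\A^{d'}$ with $d' \le d$. It is obtained by deleting repeated entries and entries from $S$, then sorting; it is injective on $\cal O_j$ because the deleted data are determined by the orbit. This bijection extends linearly to an equivariant linear isomorphism $\Lin_\FF \cal O_j \cong \Lin_\FF \cal O'$, which preserves chains of equivariant subspaces. By (b) applied at $d'$, the length is finite. Taking $l_d$ to be the resulting finite sum completes the argument.

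The only step needing care is this bijection. Specifically, one must check that the reduced orbit lies in a positive dimension (and treat the case where every entry lies in $S$). In that degenerate case $\cal O_j$ is a single point fixed by $\Aut(\A/S)$, so $\Lin_\FF \cal O_j$ is one-dimensional and has length $1$. Apart from this case, the argument is routine bookkeeping.
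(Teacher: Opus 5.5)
Your proof is correct and follows the same route as the paper. You decompose $\A^d$ into finitely many $\Aut(\A/S)$-orbits, identify each with an $S$-ordered orbit of dimension at most $d$ via an equivariant bijection, and bound the length using (sub)additivity over $\bigoplus_i \Lin_\FF \mathcal{O}_i$; your intersection/projection argument for subadditivity and your explicit handling of orbits lying entirely in $S$ just fill in details the paper leaves implicit.
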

\begin{claimproof}
    Recall that $\A$ is oligomorphic: given any $S$ and $d$, 
    we know that $\A^d$ is in an $\Aut(\A/S)$-equivariant bijection with a finite disjoint union $\biguplus_i \cal O_i$ of $S$-ordered orbits (possibly in lower dimensions).
    Hence the length of $\Lin_\FF \A^d$, under the action of $\Aut(\A/S)$, equals \[
        \text{length of $\Lin_\FF(\biguplus_i \cal O_i)$} = \text{length of $\bigoplus_i \Lin_\FF \cal O_i$} = \sum_i \text{length of $\Lin_\FF \cal O_i$},
    \]
    which is finite if and only if each summand is finite.
\end{claimproof}

So fix $S$ and an $S$-ordered orbit $\cal O = \Aut(\A/S) \cdot o \subseteq \A^I$.
From here we take an inductive approach.
By $o |^J$ we mean the restriction of $o : I \to \A$ to $J \subseteq I$;
particularly, we will often write $o |^{-i}$ instead of $o |^{ I \setminus \{i\} }$.
The image $\cal O|^J$ of $\cal O$ under this projection agrees with $\Aut(\A/S) \cdot o |^J$ and is still ordered.
The function $(-)|^J$ lifts to a linear $\Aut(\A/S)$-equivariant map
\begin{align*}
    (-)|^J : \Lin_\FF \cal O &\to \Lin_\FF \cal O|^J, 
    \quad \text{defined by} \quad
    v \mapsto \sum_{a \in \cal O|^J} \sum_{b \in \cal O, b|^J = a} v(b) \cdot a.
\end{align*}
Many cancellations can occur under $(-)|^J$. 
Following the terminology used in~\cite[Eq.~(4)]{BFKM24}, 
by the \emph{balanced vectors} of $\Lin_\field \cal O$ we mean the $\Aut(\A/S)$-equivariant subspace below:
\[
    \Ker_\FF \cal O = \bigcap_{i \in I} \ker{ (-)|^{-i} }.
\]

\begin{claim}\label{claim:reduction-to-kernel}
    The following are equivalent for every finite $S \subset \A$:
    \begin{alphaenumerate}
        \item\label{item:LinO-finite-length}
        $\Lin_\FF \cal O$ has finite length with respect to $\Aut(\A/S)$ for every $S$-ordered orbit $\cal O$;

        \item\label{item:balancedLinO-finite-length}
        $\Ker_\FF \cal O$ has finite length with respect to $\Aut(\A/S)$ for every $S$-ordered orbit $\cal O$.
    \end{alphaenumerate}
\end{claim}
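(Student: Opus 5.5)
The direction (\ref{item:LinO-finite-length}) $\Rightarrow$ (\ref{item:balancedLinO-finite-length}) is immediate. $\Ker_\FF \cal O$ is an $\Aut(\A/S)$-equivariant subspace of $\Lin_\FF \cal O$, so every chain of equivariant subspaces of $\Ker_\FF \cal O$ is also a chain in $\Lin_\FF \cal O$.

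For the converse we induct on $|I|$, the dimension of the ordered orbit, and prove finite length of $\Lin_\FF \cal O$ for all $S$-ordered orbits simultaneously.

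\textbf{Base case.} If $|I| = 0$, then $\cal O$ is a single point. Then $\Lin_\FF \cal O = \FF$ has length $1$. (If empty tuples are excluded, the base case is $|I|=1$ and is handled by the same step below, with $\cal O|^{-i}$ a point.)

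\textbf{Inductive step.} Consider the equivariant linear map
\[
  \Phi : \Lin_\FF \cal O \to \bigoplus_{i \in I} \Lin_\FF \cal O|^{-i},
  \qquad v \mapsto \bigl(v|^{-i}\bigr)_{i \in I}.
\]
By definition its kernel is exactly $\Ker_\FF \cal O$. Hence we have a short exact sequence of $\Aut(\A/S)$-representations
\[
  0 \to \Ker_\FF \cal O \to \Lin_\FF \cal O \to \Phi(\Lin_\FF \cal O) \to 0 .
\]
Length is additive on short exact sequences: a chain in the middle term yields chains in the sub and the quotient through intersection and image, and this is the usual Jordan--Hölder/modular-law argument. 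So
\[
  \text{length of } \Lin_\FF \cal O \;\le\; \text{length of } \Ker_\FF \cal O + \text{length of } \Phi(\Lin_\FF \cal O).
\]
The first term is finite by assumption (\ref{item:balancedLinO-finite-length}). The image $\Phi(\Lin_\FF \cal O)$ is an equivariant subspace of the finite direct sum $\bigoplus_i \Lin_\FF \cal O|^{-i}$. Each $\cal O|^{-i}$ is an $S$-ordered orbit indexed by a set of size $|I|-1$, so the induction hypothesis gives each summand finite length. Length is additive over finite direct sums, as used in Claim~\ref{claim:reduction-to-one-orbit}, and length is monotone under passing to equivariant subspaces. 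Therefore the second term is finite too, which completes the induction.

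There is no serious obstacle here. The only points needing care are the additivity of length along short exact sequences in the category of representations, which is standard, and the remark that projections of $S$-ordered orbits are again $S$-ordered orbits, which was noted just before the definition of balanced vectors. The genuine difficulty is deferred to proving (\ref{item:balancedLinO-finite-length}) itself.
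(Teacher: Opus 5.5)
Your proposal is correct and is essentially the paper's own proof. Both argue by induction on $|I|$ with base case $I=\emptyset$ of length $1$, assemble the projections $(-)|^{-i}$ into a single equivariant map whose kernel is $\Ker_\FF \mathcal{O}$, and bound the length of $\Lin_\FF \mathcal{O}$ by the length of $\Ker_\FF \mathcal{O}$ plus $\sum_{i \in I} \text{length of } \Lin_\FF \mathcal{O}|^{-i}$; your justification of subadditivity (intersections with the kernel, images, and the modular law) simply spells out a step the paper leaves implicit.
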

\begin{claimproof}
    That (\ref{item:LinO-finite-length}) implies (\ref{item:balancedLinO-finite-length}) is clear, as $\Ker_\FF \cal O \subseteq \Lin_\FF \cal O$.
    
    To prove the other implication, assume (\ref{item:balancedLinO-finite-length}) and let $\cal O \subseteq \A^I$.
    We proceed by induction on $|I|$.
    If $I = \emptyset$, then $\cal O$ must be the entire singleton $\A^\emptyset = \{ () \}$; 
    as $\Lin_\FF \cal O$ has no non-trivial subspaces (let alone finitely supported ones), it has length $1$.
    Now if $|I| \geq 1$, assemble all $|I|$ projection maps into a single map
    \begin{align*}
        \Lin_\FF \cal O \to \bigoplus_{i \in I} \Lin_\FF \cal O|^{-i},
        \quad \text{defined by} \quad
        v \mapsto ( v|^{-i} )_{i \in I},
    \end{align*}
    whose kernel is precisely $\Ker_\FF \cal O$.
    We thus have
    \[
        \text{length of $\Lin_\FF \cal O$}  - \text{length of $\Ker_\FF \cal O$}
        \leq \sum_{i \in I} \text{length of $\Lin_\FF \cal O|^{-i}$}, 
    \]
    which shows that the length of $\Lin_\FF \cal O$ is finite from the assumptions.
\end{claimproof}

As we will see shortly, there exist plenty of balanced vectors.

\subsection{Cogs}
From now on we will use a lightweight notation for combining tuples of atoms: 
for disjoint indexing sets $I$ and $J$ (contained in $\Q$), 
if $a\in\A^I$ and $b\in\A^{J}$ are both ordered, 
then $ab\in \A^{I\cup J}$ will denote their obvious combination. 
We only use this notation if this $ab$ is ordered. 
As an obvious example, for any $S$-ordered $a\in\A^I$ and $J\subseteq I$, we have $a|^{I\setminus J}a|^J=a$.

\begin{definition}\label{def:duo}
    Let $\cal O \subseteq \A^I$ be an $S$-ordered orbit. 
    An \emph{$\cal O$-duo} $a \parallel b$ consists of tuples $a, b \in \cal O$ such that:
    \begin{bracketenumerate}
        \item\label{item:cog-ai-bi} 
        $a_i < b_i$ for all $i\in I$;
        
        \item\label{item:cog-bi-aj}  
        $b_i < a_j$ for all $i<j\in I$;

        \item 
        for any binary $R$ in $\sigma_0$ (which we assumed to be irreflexive) and $i,j\in I$:
        \[
            R(a_i,b_j) \iff R(a_i,a_j) \stackrel{a,b\in\cal O}{\iff} R(b_i,b_j) \iff R(b_i,a_j).           
        \]
     \end{bracketenumerate}
\end{definition}
\begin{remark}\label{rem:duo}
Conditions (\ref{item:cog-ai-bi}) and (\ref{item:cog-bi-aj}) specify a total order on the $2|I|$ atoms in a duo.
Moreover, thanks to irreflexivity, 
each $a_i$ is unrelated to its counterpart $b_i$.
Further, given any $J \subseteq I$, the combined tuple $a|^{I\setminus J} b|^J$ satisfies the same relations as $a$ (and $b$), so it lies in $\cal O$. 
In particular, taking $J=\{i\}$, there is an automorphism $\pi_i$ that sends $a_i$ to $b_i$ and fixes all the other elements of $a$, $b$, and $S$.
\lipicsEnd\end{remark}

For the special case of the ordered atoms (Examples~\ref{ex:order-atoms} and~\ref{ex:generically-ordered-equality}) the following construction was studied in~\cite[p.~11]{BFKM24}, and it already appeared as early as in~\cite[p.~125]{Gray97} under the name of ``polytabloids''.

\begin{definition}
    Given an $\cal O$-duo $a \parallel b$, the corresponding \emph{$\cal O$-cog} is the vector
    \[
        a \between b =
        \sum_{J \subseteq I} (-1)^{|J|}
        (a|^{I \setminus J} b|^{J})
    \]
    in $\Lin_\FF \cal O$. 
    The linear span of all $\cal O$-cogs is denoted by $\Cog_\FF \cal O$.
\end{definition}
Note that, for a fixed $S$-ordered orbit $\cal O$, all $\cal O$-duos (hence all $\cal O$-cogs) are in the same $\Aut(\A/S)$-equivariant orbit.
As a result, $\Cog_\FF \cal O$ is an $\Aut(\A/S)$-equivariant subspace of $\Lin_\FF \cal O$ and it is generated by any single cog.

\begin{claim}\label{claim:cogs-are-balanced}
    $\Cog_\FF \cal O\subseteq \Ker_\FF \cal O$.
\end{claim}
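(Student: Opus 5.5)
Since $\Ker_\FF \cal O$ is a subspace and $\Cog_\FF \cal O$ is the linear span of the $\cal O$-cogs, it is enough to show that each single cog $a \between b$ lies in $\ker (-)|^{-i}$ for every $i \in I$. So fix an $\cal O$-duo $a \parallel b$ and an index $i \in I$. We will compute $(a \between b)|^{-i}$ directly from the definitions and show that the terms cancel in pairs.

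First, by Remark~\ref{rem:duo}, every combined tuple $a|^{I\setminus J} b|^J$ lies in $\cal O$. Hence $a \between b$ really is an element of $\Lin_\FF \cal O$, and by linearity
\[
    (a \between b)|^{-i} = \sum_{J \subseteq I} (-1)^{|J|} \bigl(a|^{I \setminus J} b|^{J}\bigr)|^{-i}.
\]
Next we pair off the subsets $J \subseteq I$ according to whether they contain $i$. The involution $J \mapsto J \triangle \{i\}$ matches each $J \not\ni i$ with $J' = J \cup \{i\}$. The tuples $a|^{I\setminus J} b|^J$ and $a|^{I\setminus J'} b|^{J'}$ differ only in coordinate $i$: one has $a_i$ there and the other has $b_i$. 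Their restrictions to $I \setminus \{i\}$ are therefore the same tuple, namely $a|^{I\setminus (J\cup\{i\})} b|^{J}$. Since $|J'| = |J|+1$, the two terms carry opposite signs and cancel in $\Lin_\FF \cal O|^{-i}$. Summing over all such pairs gives $(a \between b)|^{-i} = 0$.

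The only point needing care is the projection map itself. On a basis vector $c \in \cal O$, the map $(-)|^{J}$ sends $c$ to the basis vector $c|^J$; this is how it is defined, as the coefficient-collecting formula. So the images of the two paired terms are literally the same basis element of $\Lin_\FF \cal O|^{-i}$, and the cancellation is legitimate. The other ingredient is that all the tuples $a|^{I\setminus J}b|^J$ lie in $\cal O$, which Remark~\ref{rem:duo} provides. I expect no real obstacle beyond this bookkeeping.
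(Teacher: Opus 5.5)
Your proposal is correct and is essentially the paper's own argument: pair each $J \not\ni i$ with $J \cup \{i\}$, note that the two tuples differ only in the $i$-th entry and carry opposite signs, and conclude that they cancel under $(-)|^{-i}$. The only additions are bookkeeping the paper leaves implicit, namely that all the combined tuples lie in $\cal O$ by Remark~\ref{rem:duo} and that $(-)|^{-i}$ sends the basis vector $c$ to the basis vector $c|^{-i}$.
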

\begin{claimproof}
    Let $\cal O \subseteq \A^I$, let $a\parallel b$ be an $\cal O$-duo, and take any $i \in I$.
    The subsets of $I$ come in pairs of $J$ and $J \cup \{i\}$, where $J\subseteq I \setminus \{i\}$.
    The two tuples $a|^{I\setminus J} b|^{J}$ and $a|^{I \setminus (J \cup \{i\})}b|^{J \cup \{i\}}$ are present in 
    $a\between b$ with the opposite coefficients, and they differ only on the $i$-th entry.
    Therefore they cancel out under $(-)|^{-i}$; hence $(a \between b )|^{-i} = 0$.
\end{claimproof}

It would be remiss not to address the fact that so far, we have not shown that $\cal O$-duos and $\cal O$-cogs even exist in general.
Let us rectify this by showing that they can be found in every non-zero $\Aut(\A/S)$-equivariant subspace of $\Lin_\FF{\cal O}$.

\subsection{Finding cogs}\label{subsec:finding-cogs}
We begin with a technical lemma, which combines the free amalgamation in $\Cclass_0$ and the generic order of $\A$.
\begin{lemma}\label{lem:free-fresh}
    Let $X, Y, \{z\} \subset \A$ be disjoint and finite.
    Then there is an automorphism $\tau \in \Aut(\A)$ such that
    \begin{bracketenumerate}
        \item\label{item:tau-fixes-X} 
        $\tau$ fixes every $x \in X$;

        \item\label{item:tau-frees-from-Yz} 
        $\tau(z)$ is unrelated to all $y \in Y$ and to $z$;
        
        \item\label{item:tau-adds-epsilon} 
        $\tau(z) > z$.
    \end{bracketenumerate}
\end{lemma}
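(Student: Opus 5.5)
By homogeneity of $\A$, it suffices to find a single new point. Concretely, we want an element $z' \in \A$ with $z' \notin X$ such that:
\begin{itemize}
\item the substructure on $X \cup \{z'\}$ is isomorphic to the one on $X \cup \{z\}$ via the map fixing $X$ and sending $z \mapsto z'$;
\item $z'$ is unrelated to every $y \in Y$ and to $z$;
\item $z' > z$.
\end{itemize}
Given such a $z'$, the partial isomorphism $\mathrm{id}_X \cup \{z \mapsto z'\}$ between finite substructures extends to an automorphism $\tau \in \Aut(\A)$. Conditions (\ref{item:tau-fixes-X})--(\ref{item:tau-adds-epsilon}) then hold by construction. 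The whole task is therefore to realise a suitable one-point extension of the finite structure $X \cup Y \cup \{z\}$ inside $\A$.

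To find $z'$, I will build an abstract finite $\sigma$-structure $D$ on the set $X \cup Y \cup \{z\} \cup \{z^*\}$, where $z^*$ is a new point.

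\textbf{The $\sigma_0$-part.} On $X \cup Y \cup \{z\}$ the relations are those inherited from $\A$. The unary relations of $z^*$ are copied from $z$. The binary relations between $z^*$ and each $x \in X$, in both directions, are copied from those between $z$ and $x$. There are no binary relations at all between $z^*$ and any element of $Y \cup \{z\}$, and $z^*$ carries no loops; this is consistent with irreflexivity. In other words, the $\sigma_0$-reduct of $D$ is the free amalgam of $X \cup Y \cup \{z\}$ and a copy $X \cup \{z^*\}$ of $X \cup \{z\}$ over their common part $X$.

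\textbf{The order.} We need a total order on $D$ that extends the order of $\A$ on $X \cup Y \cup \{z\}$, puts $z^*$ in the same cut relative to $X$ as $z$, and has $z^* > z$. Take $z^*$ to be immediately above $z$: greater than $z$, and smaller than every element of $X \cup Y$ that lies above $z$. Then $z^*$ has the same order relations to all points of $X$ as $z$, and $z < z^*$ as required.

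\textbf{Membership in $\Cclass$.} The $\sigma_0$-reduct of $D$ lies in $\Cclass_0$ because $\Cclass_0$ is closed under free amalgamation; this is where the free amalgamation hypothesis is used. Alternatively, use $\Cclass_0 = \Forb(\Fclass)$ directly: any copy of some $F \in \Fclass$ inside $D$ that contains $z^*$ cannot contain any point of $Y \cup \{z\}$, since every pair of points in $F$ is related and $z^*$ is related to none of those points. Such a copy therefore lies inside $X \cup \{z^*\} \cong X \cup \{z\}$, which is a substructure of $\A$ — a contradiction. Since $D$ is a total ordering of a structure in $\Cclass_0$, we get $D \in \Cclass$.

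\textbf{Realising $D$ in $\A$.} By the extension property of the Fraïssé limit, the identity embedding of $X \cup Y \cup \{z\}$ into $\A$ extends to an embedding of $D$ into $\A$. Let $z'$ be the image of $z^*$. It satisfies all three requirements: its quantifier-free type over $X$ matches that of $z$; it is unrelated to $Y \cup \{z\}$; and $z' > z$. Because $D$ is a structure with $z^*$ a point distinct from all of $X \cup Y \cup \{z\}$, the embedding also gives $z' \notin X \cup Y \cup \{z\}$.

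The main obstacle is verifying $D \in \Cclass$: we must check both that freeness of the amalgamation class really permits adding $z^*$ with no edges to $Y \cup \{z\}$, and that the chosen order placement is compatible with the copied order type over $X$. The rest is routine use of homogeneity.
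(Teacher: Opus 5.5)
Your proposal is correct and follows essentially the same route as the paper: you form the free amalgam of $X\cup Y\cup\{z\}$ with a copy of $X\cup\{z\}$ over $X$, place the new point immediately above $z$, embed the result into $\A$ over $X\cup Y\cup\{z\}$, and extend $\mathrm{id}_X\cup\{z\mapsto z'\}$ to an automorphism by homogeneity. Your explicit check that the ordered amalgam lies in $\Cclass$ (via freeness, or via $\Forb(\Fclass)$) spells out a step the paper leaves implicit.
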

\begin{proof}
Form the free amalgam in ${\Cclass}_0$ of $X\cup Y\cup\{z\}$ and $X\cup\{z\}$ over the common part $X$. The resulting structure can be presented as  $X\cup Y\cup\{z,z'\}$ for some new element $z'$. To make this a $\sigma$-structure, inherit the order on $X \cup Y \cup \{z\}$ from $\A$, and declare that $z < z'$, as well as $z' < a$ whenever $z<a$ for $a\in X \cup Y$. By homogeneity, $X \cup Y \cup \{z, z'\}$ embeds into $\A$ via some $f$ which is the identity on $X \cup Y \cup \{z\}$;
again by homogeneity, we may extend the embedding 
\[          x \in X \mapsto x, \quad z \mapsto f(z')     \] 
to some automorphism $\tau$ that satisfies (\ref{item:tau-fixes-X}), (\ref{item:tau-frees-from-Yz}), and (\ref{item:tau-adds-epsilon}).
\end{proof}

\begin{lemma}\label{lem:cog-fresh-single}
    Suppose $a\parallel b$ is an $\cal O$-duo, where $\cal O \subseteq \A^I$ is $S$-ordered.
    Given $z \in S$, 
    \begin{itemize}
        \item write $S' = S \setminus \{z\}$;
        \item let $j \not\in I$ be such that $a z \in \A^{I \cup \{j\}}$~--~thus ${\cal O'}=\Aut(\A / S') \cdot az \subseteq \A^{I \cup \{j\}}$~--~is $S'$-ordered;
        \item let $X \subset \A$ be any finite set containing $\{a_i, b_i \mid i \in I\} \cup S'$ but not $z$.
    \end{itemize}
    Denote $z'=\tau(z)$, where $\tau \in \Aut(\A/X)$ is afforded by Lemma~\ref{lem:free-fresh} (with an arbitrary $Y$). Then $az \parallel bz'$ is an $\cal O'$-duo.
\end{lemma}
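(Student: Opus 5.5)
We verify the three conditions of Definition~\ref{def:duo} for the pair $az$, $bz'$, indexed by $I\cup\{j\}$. Before that, we must check that $bz'$ is a well-formed ordered tuple lying in the same orbit $\cal O'$ as $az$. The key observation is that $\tau$ fixes $X \supseteq \{a_i,b_i\}\cup S'$ pointwise. Hence $\tau$ maps $az$ to $a\,\tau(z)=az'$, and $az'$ is in $\cal O'$, since $\tau\in\Aut(\A/S')$.

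Next we pass from $az'$ to $bz'$. By Remark~\ref{rem:duo}, the atoms of $a$ and $b$ are related in matching patterns, and each $b_i$ occupies the same order position relative to the $a$'s and $b$'s as prescribed. The plan is to use homogeneity. We show that the map $a\mapsto b$, $z'\mapsto z'$, $s\mapsto s$ for $s\in S'$ is a partial isomorphism, so that it extends to an element of $\Aut(\A/S')$ sending $az'$ to $bz'$. For this we need three facts.
\begin{itemize}
  \item $b$ relates to $S'$ as $a$ does. This holds because $a,b\in\cal O=\Aut(\A/S)\cdot o$, so $b$ relates to all of $S$, in particular to $S'$, as $a$ does.
  \item $b$ has the same $\sigma_0$-relations to $z'$ as $a$ has to $z'$.
  \item $b$ has the same order relations to $z'$ as $a$ has to $z'$.
\end{itemize}

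The second and third facts are the heart of the argument. Since $a,b\in\cal O$ with $z\in S$, each $b_i$ relates to $z$ exactly as $a_i$ does, and $a_i$ relates to $z'=\tau(z)$ exactly as $a_i=\tau(a_i)$ relates to $z$. However, this does not directly give the relation between $b_i$ and $z'$. Here we use that $b_i\in X$ is also fixed by $\tau$. Since $\tau$ is an automorphism fixing $b_i$, the relation of $b_i$ to $z'$ matches that of $b_i$ to $z$, which matches that of $a_i$ to $z$, which matches that of $a_i$ to $z'$. The same reasoning applies verbatim to the order, so $bz'\in\cal O'$. We must also check that $bz'$ is ordered with $j$ in the same position, and that $z'\notin S'$; both follow since $\tau$ fixes $S'$ and is injective.

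The duo conditions now follow. For condition~(\ref{item:cog-ai-bi}), we have $a_i<b_i$ for $i\in I$, and $z<z'$ by Lemma~\ref{lem:free-fresh}(\ref{item:tau-adds-epsilon}). Condition~(\ref{item:cog-bi-aj}) requires the following comparisons.
\begin{itemize}
  \item For $i<j'$ in $I$, this is inherited from the duo $a\parallel b$.
  \item For $i<j$, we need $b_i<z$. We know $a_i<z$ because $az$ is ordered and $i<j$. Then $b_i<z$ follows because $b_i$ and $a_i$ compare to $z$ identically, as $a,b\in\cal O$ and $z\in S$.
  \item For $j<i$, we need $z'<a_i$. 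We know $z<a_i$, and $\tau$ fixes $a_i$, so $z'=\tau(z)<\tau(a_i)=a_i$.
\end{itemize}
For condition~(3), pairs within $I$ are inherited from $a\parallel b$. Relations between the $j$-th entry and others are of the following kinds.
\begin{itemize}
  \item $R(a_i,z')$ versus $R(a_i,z)$, and similarly $R(b_i,z)$ and $R(b_i,z')$ in both directions. These are all equal by the chain of equivalences established above.
  \item $R(z,z')$ versus $R(z,z)$. By Lemma~\ref{lem:free-fresh}(\ref{item:tau-frees-from-Yz}), $z'$ is unrelated to $z$, and $R(z,z)$ fails by irreflexivity, so both are false.
  \item $R(z',z')$ versus $R(z,z)$. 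Both are false by irreflexivity.
\end{itemize}

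The main obstacle is bookkeeping the mixed pairs, namely $b_i$ against $z$ and $z'$, in both order and $\sigma_0$-relations. The decisive point is that $X$ contains the $b_i$ as well as the $a_i$, so that $\tau$ fixes them. The freeness of $z'$ from $z$ given by Lemma~\ref{lem:free-fresh} is used only for the pair $(z,z')$; the choice of $Y$ is irrelevant.
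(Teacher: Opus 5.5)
Your proof is correct and follows essentially the same route as the paper: a direct check of the duo conditions, using that $\tau$ fixes every $a_i$ and $b_i$, that $a,b$ lie in the same $\Aut(\A/S)$-orbit while $z\in S$, and that $z'$ is unrelated to $z$ (in both directions), together with irreflexivity. The only detour is your homogeneity argument for $bz'\in\mathcal{O}'$; it can be shortened to $bz'=\tau\sigma(az)$, where $\sigma\in\Aut(\A/S)$ is any automorphism with $\sigma(a)=b$, since both $\sigma$ and $\tau$ fix $S'$.
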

\begin{proof}
    First, notice that $bz'\in {\cal O}'$ and that we have the required order relations with $z$ and $z'$. 
    The remaining condition (3) of Definition~\ref{def:duo}, for any binary $R$ in $\sigma_0$, splits into the following cases (and their symmetric counterparts):
\begin{itemize}
\item $R(a_i,b_{i'})\iff R(a_i,a_{i'})$ since $a \parallel b$ is an $\cal O$-duo;
\item $R(a_i,z')\iff R(a_i,z)$ since $\tau$ is an automorphism that fixes all $a_i$;
\item $R(a_i,z)\iff R(b_i,z)$ since $a, b \in \cal O$ and $z\in S$;
\item $R(z,z')$ and $R(z,z)$ are both false: $z'$ is unrelated to $z$ by Lemma~\ref{lem:free-fresh}, and $R$ is irreflexive. \qedhere
\end{itemize}
\end{proof}

Starting from an empty duo, we may apply Lemma~\ref{lem:cog-fresh-single} inductively and obtain:
\begin{lemma}\label{lem:cog-fresh-full}
    Let $\cal O \subseteq \A^I$ be an $S$-ordered orbit.
    Then any $a \in \cal O$ can be extended to an ${\cal O}$-duo $a\parallel b$.
\end{lemma}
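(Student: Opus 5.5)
We argue by induction on $|I|$, peeling the largest index off $a$ and feeding it back in via Lemma~\ref{lem:cog-fresh-single}. The subtle point is that Lemma~\ref{lem:cog-fresh-single} extends a duo by a coordinate that currently lies in the support, so each induction step must enlarge the support by one element.

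\textbf{Base case.} Take $I=\emptyset$. Then $a=b=()$ gives an $\cal O$-duo, since all three conditions of Definition~\ref{def:duo} are vacuous.

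\textbf{Setup for the inductive step.} For $|I|\ge 1$, let $j=\max I$, $I_0=I\setminus\{j\}$ and $z=a_j$. Since $a$ is $S$-ordered, $z\notin S$. Put $S^+=S\cup\{z\}$. Then $a|^{I_0}$ is $S^+$-ordered:
\begin{itemize}
\item its entries avoid $S$ and are distinct from $z$;
\item they are increasing.
\end{itemize}
So $\cal O_0=\Aut(\A/S^+)\cdot a|^{I_0}\subseteq\A^{I_0}$ is an $S^+$-ordered orbit on fewer indices. By the induction hypothesis, applied with support $S^+$, there is an $\cal O_0$-duo $a|^{I_0}\parallel b_0$.

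\textbf{Applying Lemma~\ref{lem:cog-fresh-single}.} Use the support $S^+$ with the distinguished element $z\in S^+$, so that $S^+\setminus\{z\}=S$. Since $j=\max I$, the combined tuple $a|^{I_0}z=a$ is ordered and is indexed by $I_0\cup\{j\}=I$. Hence the orbit $\cal O'$ of that lemma is $\Aut(\A/S)\cdot a=\cal O$. Choose $X=\{a_i,(b_0)_i\mid i\in I_0\}\cup S$, which does not contain $z$. Lemma~\ref{lem:free-fresh} supplies $\tau\in\Aut(\A/X)$, and with $z'=\tau(z)$, Lemma~\ref{lem:cog-fresh-single} gives an $\cal O$-duo $a\parallel b_0z'$. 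Setting $b=b_0z'$ finishes the step.

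\textbf{Main obstacle.} The only delicate point is the bookkeeping of supports. The statement is quantified over all finite $S$ and all $S$-ordered orbits, and the induction hypothesis must be invoked for the larger support $S^+$, not for $S$. We also need to check that Lemma~\ref{lem:cog-fresh-single} produces the combined tuple $bz'$ in the correct position, which holds because $z=a_j$ is the top coordinate.

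Two conditions of that lemma should be confirmed explicitly:
\begin{itemize}
\item $X$ contains $S'=S$ and all entries of the smaller duo;
\item $z\notin X$, which holds because $z\notin S$ and $z$ is not an entry of $a|^{I_0}$ or of $b_0$. For $b_0$ this is because $b_0\in\cal O_0$ is $S^+$-ordered, so its entries avoid $z$.
\end{itemize}
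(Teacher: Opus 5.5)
Your proof is correct and follows the paper's own argument: the paper proves this lemma in one line, by starting from the empty duo and applying Lemma~\ref{lem:cog-fresh-single} inductively. You make the bookkeeping that the paper leaves implicit explicit, namely applying the induction hypothesis over the enlarged support $S\cup\{a_j\}$ so that $a_j$ can be released via Lemma~\ref{lem:cog-fresh-single}, and you verify the needed side conditions ($z\notin X$, and the new orbit $\cal O'$ equals $\cal O$).
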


As some $a \in \cal O$ always exists, it follows that Claim~\ref{claim:cogs-are-balanced} was not vacuous:
we now know $\Cog_\FF{\cal O}$, and hence $\Ker_\FF{\cal O}$, is non-zero~--~but barely so.
Indeed, as the result below shows, $\Cog_\FF{\cal O}$ admits no non-trivial $\Aut(\A/S)$-equivariant subspaces.
\begin{theorem} \label{thm:cogs-arise-everywhere}
    Any non-zero $\Aut(\A/S)$-equivariant subspace $V\subseteq \Lin_\FF \cal O$ contains $\Cog_\FF \cal O$.
\end{theorem}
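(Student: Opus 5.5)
The plan is to start from an arbitrary non-zero $v\in V$ and apply to it a product of operators of the form $\mathrm{id}-\tau$, with $\tau\in\Aut(\A/S)$. Since $V$ is an $\Aut(\A/S)$-equivariant subspace, it is closed under all such operators. The automorphisms $\tau$ will be chosen so that the result is a non-zero scalar multiple of a single $\cal O$-cog $a\between b$. Once this is done, we are finished: all $\cal O$-cogs lie in one $\Aut(\A/S)$-orbit, so $\Cog_\FF\cal O$ is generated by any single cog and is therefore contained in $V$.

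Concretely, write $v=\sum_{c}\lambda_c\, c$ as a finite combination of tuples $c\in\cal O\subseteq\A^I$, and fix one tuple $a$ with $\lambda_a\neq 0$. Enumerate $I$ as $i_1<\dots<i_n$. We construct atoms $b_{i_1},\dots,b_{i_n}$ and automorphisms $\tau_{i_1},\dots,\tau_{i_n}$ inductively.

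At step $k$, apply Lemma~\ref{lem:free-fresh} with $z=a_{i_k}$, an arbitrary $Y$, and
\[
X_k = S\cup\{\text{all atoms occurring in tuples } c \text{ with } \lambda_c\neq 0\}\cup\{b_{i_1},\dots,b_{i_{k-1}}\}\setminus\{a_{i_k}\}.
\]
This yields $\tau_{i_k}\in\Aut(\A/X_k)$, and we set $b_{i_k}=\tau_{i_k}(a_{i_k})$.

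We then check that $a\parallel b$ is an $\cal O$-duo.
\begin{itemize}
\item Since $\tau_{i_k}$ fixes every $a_j$ with $j\neq i_k$, the tuple $b$ lies in $\cal O$.
\item We have $b_{i_k}>a_{i_k}$ by construction.
\item We have $b_{i_k}<a_{i_{k+1}}$, because $\tau_{i_k}$ preserves the order and fixes $a_{i_{k+1}}$.
\item For condition~(3) of Definition~\ref{def:duo}: relations between $b_{i_k}$ and $a_j$, or between $b_{i_k}$ and $b_j$ for $j<i_k$, are copied from those with $a_{i_k}$, since the other endpoint is fixed by $\tau_{i_k}$. Finally, $b_{i_k}$ is unrelated to $a_{i_k}$ by Lemma~\ref{lem:free-fresh}\eqref{item:tau-frees-from-Yz}.
\end{itemize}
This is essentially the argument of Lemma~\ref{lem:cog-fresh-single}, run one coordinate at a time. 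As in Remark~\ref{rem:duo}, for every $J\subseteq I$ the composite $\tau_J$ of the $\tau_i$ with $i\in J$ then sends $a$ to $a|^{I\setminus J}b|^J$.

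Next, compute $w=(\mathrm{id}-\tau_{i_n})\cdots(\mathrm{id}-\tau_{i_1})v\in V$. Expanding gives
\[
w=\sum_c\lambda_c\sum_{J\subseteq I}(-1)^{|J|}\tau_J(c).
\]
Consider any $c$ that omits some atom $a_i$. The atoms of $\tau_J(c)$, for $J\not\ni i$, are all in $X_i$, so $\tau_i$ fixes $\tau_J(c)$. The terms indexed by $J$ and $J\cup\{i\}$ therefore cancel, just as in Claim~\ref{claim:cogs-are-balanced}. Only tuples containing every $a_i$ survive. Since each $c$ is an ordered tuple of length $|I|$, the only such tuple is $a$ itself. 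Hence $w=\lambda_a\,(a\between b)\neq0$, and so $a\between b\in V$.

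I expect the main obstacle to be the bookkeeping in this cancellation step. One must make sure that $\tau_i$ fixes every atom of every intermediate tuple other than $a_i$, including the $b_j$ produced by the other automorphisms. A fixed composition order ($\tau_{i_1}$ first) together with the inclusion of the earlier $b_j$'s in $X_k$ is meant to handle this. Still, one should double-check that a later $b_{i_m}$, for $m>k$, cannot collide with an atom that $\tau_{i_k}$ moves; this holds because $\tau_{i_k}$ moves only atoms outside $X_k$, and within the relevant tuples the only such atom is $a_{i_k}$.
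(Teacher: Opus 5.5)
Your argument is essentially the paper's. You pick $a$ with $v(a)\neq 0$ and extend it to a duo $a\parallel b$ whose new atoms are fresh with respect to every atom occurring in $v$. You then apply $\prod_i(\mathrm{id}-\tau_i)$ and observe that only the terms coming from $a$ survive, leaving $v(a)\cdot a\between b$. Your construction of $b$ and your verification of the duo conditions are fine.

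The step that is misjustified is the cancellation. Suppose $J\not\ni i$ contains some $m>i$ and $c$ contains $a_m$. Then $\tau_J(c)$ contains $b_m$, which lies outside $X_i$. So it is false that all atoms of $\tau_J(c)$ lie in $X_i$.

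Moreover, in your composition order $\tau_i$ acts before $\tau_m$, so ``$\tau_i$ fixes $\tau_J(c)$'' is not the relevant fact. Write $J_{<i}=\{j\in J\mid j<i\}$ and $J_{>i}=\{j\in J\mid j>i\}$. Then $\tau_{J\cup\{i\}}=\tau_{J_{>i}}\circ\tau_i\circ\tau_{J_{<i}}$ and $\tau_J=\tau_{J_{>i}}\circ\tau_{J_{<i}}$. So what you need is that $\tau_i$ fixes $\tau_{J_{<i}}(c)$.

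That does hold. By induction, the atoms of $\tau_{J_{<i}}(c)$ lie among those of $c$ together with $\{b_j\mid j\in J_{<i}\}$. When $c$ omits $a_i$, all of these lie in $X_i$. Note that $b_j\neq a_i$, because $b_j\notin X_j$ while $a_i\in X_j$.

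Your closing remark is inaccurate for the same reason. $\tau_{i_k}$ may well move a later $b_{i_m}$, since it lies outside $X_k$. What saves you is only that, in your order, $\tau_{i_k}$ never acts on a tuple containing it.

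The paper sidesteps this asymmetry. It enlarges the support to $S^*$, consisting of $S$ and all atoms of $v$ other than those of $a$, and takes an $\mathcal{O}^*$-duo from Lemma~\ref{lem:cog-fresh-full}. It then uses the automorphisms $\pi_i$ of Remark~\ref{rem:duo}. Each $\pi_i$ fixes $S^*$ and every atom of $a$ and $b$ except $a_i$, so the $\pi_i$ act independently on all relevant tuples and the composition order is irrelevant.
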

\begin{proof}
    Pick any $v \in V$ and $a \in \cal O$ with $v(a)\neq 0$;
    it is enough to show that $V$ contains $a \between b$ for some $b\in \cal O$.
    Define:
    \[
        S^* = S \cup \{c_i \mid v(c) \neq 0, i \in I\} \setminus \{a_i \mid i \in I\} \supseteq S
    \]
    and put $\cal O^* = \Aut(\A / S^*) \cdot a \subseteq \cal O$~--~then $\cal O^*$ is $S^*$-ordered.
    By Lemma~\ref{lem:cog-fresh-full}, we can find $b \in \cal O^*$ such that $a \parallel b$ is an $\cal O^*$-duo and \emph{a fortiori} an $\cal O$-duo.
    Take the automorphisms $\pi_{i_1}, \dots, \pi_{i_d}$ from Remark~\ref{rem:duo}, where $i_1, \dots, i_d$ enumerate $I$.
    Now define $v^{(0)} = v$ and \[
        v^{(k)} = v^{(k-1)} - \pi_{i_k} v^{(k-1)}.
    \]
    Then each $v^{(k)}$ is in $V$.
   By induction on $k$, we have:
    \[
        v^{(k)} = \sum_{c \in C_k} \sum_{J \subseteq \{i_1, \dots, i_k\}} (-1)^{|J|} v(c) \left(\prod_{j \in J} \pi_j c\right),
    \]
    where
    \[C_k = \{ c \mid\ v(c) \neq 0 \text{ and } \{c_{i_1}, \dots, c_{i_k}, \dots, c_{i_d}\} \supseteq \{a_{i_1}, \dots, a_{i_k}\}\ \}.
    \]
    But $\{c_{i_1}, \dots, c_{i_d}\} \supseteq \{a_{i_1}, \dots, a_{i_d}\}$ means that $c = a$, so $C_d=\{a\}$ and $\frac{1}{v(a)}v^{(d)}=a\between b$.
\end{proof}

\begin{corollary}\label{cor:cog-simple}
    $\Cog_\FF \cal O$ has length $1$.
\end{corollary}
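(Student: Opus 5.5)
This is a direct consequence of Theorem~\ref{thm:cogs-arise-everywhere}, together with the fact that $\Cog_\FF \cal O$ is non-zero.

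First I would check non-triviality. By Lemma~\ref{lem:cog-fresh-full}, any $a \in \cal O$ extends to an $\cal O$-duo $a \parallel b$. The cog $a \between b$ has coefficient $1$ at the tuple $a$, which corresponds to $J = \emptyset$. Every other summand $a|^{I\setminus J} b|^J$ with $J \neq \emptyset$ is a different tuple, because $a_i < b_i$ for $i \in J$. Hence $a \between b \neq 0$, and $\Cog_\FF \cal O$ is a non-zero $\Aut(\A/S)$-equivariant subspace. It is equivariant because it is spanned by an equivariant orbit of cogs.

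Next, let $V \subseteq \Cog_\FF \cal O$ be any non-zero $\Aut(\A/S)$-equivariant subspace. Then $V$ is also a non-zero equivariant subspace of $\Lin_\FF \cal O$. By Theorem~\ref{thm:cogs-arise-everywhere}, $V \supseteq \Cog_\FF \cal O$, so $V = \Cog_\FF \cal O$.

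Therefore the only equivariant subspaces of $\Cog_\FF \cal O$ are $0$ and the whole space. Every chain has the form $0 \subset \Cog_\FF \cal O$, so the length is exactly $1$. No step here is a real obstacle; the only point needing care is that the cog is a non-zero vector, which follows from the strict order condition in the duo.
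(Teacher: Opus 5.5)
Your proof is correct and follows the paper's route: non-triviality of $\Cog_\FF \mathcal{O}$ comes from Lemma~\ref{lem:cog-fresh-full}, and minimality comes from applying Theorem~\ref{thm:cogs-arise-everywhere} to a non-zero equivariant subspace $V \subseteq \Cog_\FF \mathcal{O}$, which forces $V = \Cog_\FF \mathcal{O}$. Your explicit check that a cog is non-zero (its $2^{|I|}$ tuples are pairwise distinct because $a_i < b_i$) fills in a detail the paper leaves implicit.
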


In light of Claims~\ref{claim:reduction-to-one-orbit},~\ref{claim:reduction-to-kernel} and~\ref{claim:cogs-are-balanced}, 
for the finite length property it is enough to prove that $\Ker_\FF \cal O \subseteq \Cog_\FF \cal O$. 
In words, we need to show that every balanced vector in $\Lin_\FF{\cal O}$ is a linear combination of $\cal O$-cogs.
Before stating this (as Theorem~\ref{thm:cog-span-generally}), let us illustrate the key ideas of a proof on an example.

\subsection{Spanning by cogs: an extended example}
\label{ex:extended-example}
Let $\A_0$ be the universal triangle-free (undirected) graph, introduced by Henson~\cite{Henson1971}, and let $\A$ be its generically totally ordered version. 
Consider nine atoms $\{a,\ldots,i\}$, ordered by~$<$ alphabetically, with the edge relation as shown here:
\[\scalebox{0.8}{
\rotatebox{7}{
    \xymatrix@R=6pt@C=15pt{
    \rotatebox{-7}{$h$} & & & & & & \rotatebox{-7}{$i$} \\
    \\
    & & & \rotatebox{-7}{$g$}\ar@{-}[uulll]\ar@{-}[uurrr] \\
    & \rotatebox{-7}{$e$} & & & & \rotatebox{-7}{$f$} \\
    & & \rotatebox{-7}{$c$}\ar@{-}[ul]\ar@{-}[uur] & & \rotatebox{-7}{$d$}\ar@{-}[uul]\ar@{-}[ur] \\
    \rotatebox{-7}{$a$}\ar@{-}[rrrrrr]\ar@{-}[uuuuu]\ar@{-}[uur] & & & & & & \rotatebox{-7}{$b$}\ar@{-}[uul]\ar@{-}[uuuuu]
    }
}
}\]
This graph is drawn so that the total order of the atoms corresponds to the vertical order.

\pagebreak

Putting $S=\emptyset$ and $|I|=2$, let ${\cal O}$ be the ordered orbit of pairs of atoms which are adjacent. 
Consider the following vector:
\[
    v = ah - ae + ce - cg + dg - df + bf - bi + gi - gh \in \Lin_\FF{\cal O}.
\]
This can be graphically presented as the following graph:
\[\scalebox{0.8}{
\rotatebox{7}{
    \xymatrix@R=6pt@C=15pt{
    \rotatebox{-7}{$h$} & & & & & & \rotatebox{-7}{$i$} \\
    \\
    & & & \rotatebox{-7}{$g$}\ar@[blue][uulll]\ar@[red][uurrr] \\
    & \rotatebox{-7}{$e$} & & & & \rotatebox{-7}{$f$} \\
    & & \rotatebox{-7}{$c$}\ar@[red][ul]\ar@[blue][uur] & & \rotatebox{-7}{$d$}\ar@[red][uul]\ar@[blue][ur] \\
    \rotatebox{-7}{$a$}\ar@{-}[rrrrrr]\ar@[red][uuuuu]\ar@[blue][uur] & & & & & & \rotatebox{-7}{$b$}\ar@[red][uul]\ar@[blue][uuuuu]
    }
}
}\]
where edges with coefficient $+1$ are marked as red, and with $-1$ as blue. 
The arrows on the chosen edges remind us that the pairs in $\cal O$ are ordered, but this is mere decoration: 
the definition of $\cal O$ means that all arrows must point upwards.

Note that $v$ is balanced. 
Graphically, this means that every atom has as many outgoing red edges as outgoing blue edges, and as many incoming red edges as incoming blue edges.

It is easy to draw $\cal O$-cogs in this way. 
Given some additional atom $z>h$ which is adjacent to $a$ and $g$ but not to $h$, the $\cal O$-cog $ah\between gz$ can be drawn as:
\[\rotatebox{15}{
    \xymatrix{
    \rotatebox{-15}{$h$} & \rotatebox{-15}{$z$} \\
    \rotatebox{-15}{$a$}\ar@[red][u]\ar@[blue][ur] & \rotatebox{-15}{$g$}\ar@[blue][ul]\ar@[red][u]  
    }
}\]

We would like to present $v$ as a sum of such ${\cal O}$-cogs. 
Some additional atoms must be used for that, as no four atoms among the original nine form an $\cal O$-duo.
It would be very convenient to introduce a single new atom $z$ to form all the $\cal O$-duos that we will use. 
(Such a new atom is not necessary if $\A$ is the ordered atoms, as then we can simply take $z$ to be the largest atom present in the vector; 
see also the proof of~\cite[\text{Cl.~4.7}]{BFKM24}.)
We can naïvely require $z$ to be:
\begin{itemize}
    \item 
    larger than every atom in $v$,

    \item 
    adjacent to every atom that is a source of a directed edge in $v$ 
    (equivalently: that occurs as the first component of a pair in $v$), and
    
    \item 
    not adjacent to any atom that is a target of a directed edge in $v$ 
    (equivalently: that occurs as the second component of a pair in $v$).
\end{itemize}
However, such a $z$ does not exist in the ordered triangle-free graph $\A$. 
There are two problems:
\begin{itemize}
    \item 
    The atom $g$ occurs both as the first and as the second component in pairs present in $v$. 
    Our specification of whether $z$ is adjacent to $g$ is therefore inconsistent.
    
    \item 
    Atoms $a$ and $b$ both occur as first components in $v$, and they are adjacent in $\A$. 
    As a result, an atom $z$ as prescribed would create a triangle $abz$ in $\A$, which is forbidden.
\end{itemize}
We resolve such {\em conflicts} by considering auxiliary atoms $g'>g$ and $b'>b$, with just enough edges to make $gh\parallel g'i$ and $bf\parallel b'i$ valid ${\cal O}$-duos. 
Specifically, we postulate edges $E(g', h), E(g', i), E(b', f), E(b', i)$ and no more. 
Such atoms exist by Lemma~\ref{lem:free-fresh}. 
We then define:
\[
    v' = v + (gh \between g'i) - (bf \between b'i)
\]
which can be drawn as:
\[\scalebox{0.8}{
\rotatebox{7}{
    \xymatrix@R=6pt@C=15pt{
    \rotatebox{-7}{$h$} & & & & & & \rotatebox{-7}{$i$} \\
    \\
    & & & \rotatebox{-7}{$g$}\ar@{-}[uulll]\ar@{-}[uurrr] & \rotatebox{-7}{$g'$}\ar@[blue]@/_2ex/[uullll]\ar@[red]@/_1ex/[uurr] \\
    & \rotatebox{-7}{$e$} & & & & \rotatebox{-7}{$f$} \\
    & & \rotatebox{-7}{$c$}\ar@[red][ul]\ar@[blue][uur] & & \rotatebox{-7}{$d$}\ar@[red][uul]\ar@[blue][ur] \\
    \rotatebox{-7}{$a$}\ar@{-}[rrrrrr]\ar@[red][uuuuu]\ar@[blue][uur] & & & & & & \rotatebox{-7}{$b$}\ar@{-}[uul]\ar@{-}[uuuuu] & \rotatebox{-7}{$b'$}\ar@[red]@/_1ex/[uull]\ar@[blue]@/_1ex/[uuuuul]
    }
}
}\]
Now an atom $z$ as postulated above does not create any triangles:
\[\scalebox{0.8}{
\rotatebox{7}{
    \xymatrix@R=6pt@C=15pt{
    & & & \rotatebox{-7}{$z$}\ar@{-}@/_4ex/[llldddddd]\ar@{-}[lddddd]\ar@{-}[rddddd]\ar@{-}@/_-0.5ex/[rddd]\ar@{-}@/_-3ex/[rrrrdddddd] \\
    \rotatebox{-7}{$h$} & & & & & & \rotatebox{-7}{$i$} \\
    \\
    & & & \rotatebox{-7}{$g$}\ar@{-}[uulll]\ar@{-}[uurrr] & \rotatebox{-7}{$g'$}\ar@[blue]@/_2ex/[uullll]\ar@[red]@/_1ex/[uurr] \\
    & \rotatebox{-7}{$e$} & & & & \rotatebox{-7}{$f$} \\
    & & \rotatebox{-7}{$c$}\ar@[red][ul]\ar@[blue][uur] & & \rotatebox{-7}{$d$}\ar@[red][uul]\ar@[blue][ur] \\
    \rotatebox{-7}{$a$}\ar@{-}[rrrrrr]\ar@[red][uuuuu]\ar@[blue][uur] & & & & & & \rotatebox{-7}{$b$}\ar@{-}[uul]\ar@{-}[uuuuu] & \rotatebox{-7}{$b'$}\ar@[red]@/_1ex/[uull]\ar@[blue]@/_1ex/[uuuuul]
    }
}
}\]
and it is easy to calculate:
\[
v' = (ah \between g'z)  - (ae \between cz) - (cg \between dz) + (b'f \between dz) -(b'i \between g'z)
\]
which presents $v = v' - (gh \between g'i) + (bf \between b'i)$ as a linear combination of $\cal O$-cogs.

\subsection{All those equivariant subspaces}\label{sec:equivariant-subspaces}
Inspired by the above example, we shall now assert in somewhat greater generality:

\begin{theorem}\label{thm:cog-span-generally}
    Every $v \in \Ker_\EE \cal O$ can be written as
    \[
        v = \sum_{a \parallel b} \lambda_{a \parallel b} \cdot a \between b
    \]
    with $\lambda_{a \parallel b} \in \EE$,
    where $\EE$ is any subspace of some $\FF^n$.
\end{theorem}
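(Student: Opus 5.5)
We prove the statement by induction on $|I|$, imitating the extended example of Section~\ref{ex:extended-example}: first repair ``conflicts'' by adding suitable cogs, then pick one fresh atom $z$ that forms a duo with every tuple left in the support, and finally peel off the last coordinate. Working with $\EE\subseteq\FF^n$ instead of $\FF$ costs nothing: a vector in $\Ker_\EE\cal O$ is an $n$-tuple of balanced vectors over $\FF$, so it suffices to show that the $\FF$-span of cogs inside $\Lin_\FF^n$ contains it. The coefficients we produce are $\EE$-linear combinations (differences and signed copies) of the coefficients $v(c)$, so they stay in $\EE$ automatically. The base case $I=\emptyset$ is trivial: then $\Lin\cal O$ is one-dimensional, the cog is $()$ itself, and balance imposes no condition.

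For the inductive step let $v\in\Ker_\EE\cal O$ with $\cal O\subseteq\A^I$, and let $j=\max I$. Write $v=\sum_{c'} \sum_{x} v(c'x)\, c'x$, grouping tuples by their prefix $c'=c|^{-j}$. The aim is to find an atom $z$ greater than everything in $v$ such that, for every $c$ in the support of $v$, the pair $c\parallel c|^{-j}z'$ fails to be a duo only because of the last coordinate. Hence we want $c'x\parallel d'z$, with $d'$ a fresh shifted copy of $c'$, to be a duo. Then
\[
v-\sum_c v(c)\,\bigl(c\between d'z\bigr)
\]
has all of its terms with last coordinate equal to $z$, up to the terms in which some earlier coordinate has been swapped. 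More cleanly, we apply the induction hypothesis to the ``derivative'' vector $w=\sum_c v(c)\,c'$ in $\Lin\cal O|^{-j}$. This vector is balanced, because $v$ is balanced with respect to all $(-)|^{-i}$ with $i\neq j$, and it is in fact zero, because $v|^{-j}=0$. Concretely, the identity $c'x - c'z = $ (cog in the extended orbit) $+$ lower-order terms lets us rewrite $v=\sum v(c)(c'x-c'z)$, since $\sum_c v(c)c'z = (v|^{-j})z=0$. Each difference $c'x-c'z$ is the restriction to one coordinate of a cog, and the remaining coordinates are handled by applying induction to balanced vectors of $\cal O|^{-j}$ and tensoring with $x$, $z$ via Lemma~\ref{lem:cog-fresh-single}. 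That lemma extends an $\cal O|^{-j}$-duo by one coordinate, turning cogs of the smaller orbit into cogs of $\cal O$ whose $j$-th entries are $x$ and a freshly chosen $x'$.

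The main obstacle is the existence of $z$. It must be above all atoms, and for each support tuple $c$ it must satisfy, for every binary $R$, $R(c_i,z)\iff R(c_i,c_j)$, which mirrors condition~(3) of Definition~\ref{def:duo}. This fails in two ways: an atom occurring at different positions with incompatible requirements, and the required neighbourhood of $z$ creating a forbidden substructure from $\Fclass$. To fix this we first replace $v$ by $v+\sum(\text{cogs})$, where each conflicting tuple $c$ is moved via a cog $c\between \tilde c$ whose new atoms, supplied by Lemma~\ref{lem:free-fresh}, are related only to what the duo condition forces. After this step the atoms that $z$ must be related to, together with $z$, form a structure in which every related pair already occurs inside a single support tuple. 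Free amalgamation, that is, $\Forb(\Fclass)$ with every two elements of each $F\in\Fclass$ related, then shows that adding $z$ creates no forbidden $F$. Any such $F$ containing $z$ would have all its other elements related to each other and to $z$, which forces them into one tuple and hence into an existing orbit. The generic order lets us place $z$ on top. Checking that the repair step removes every conflict, with repeated positions handled by ordering the shifts, is where I expect most of the work. The arity-two assumption is used exactly here, since relations to $z$ are then determined pairwise.
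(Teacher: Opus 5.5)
Your architecture matches the paper's: remove conflicts, then use one fresh top atom $z$ obtained by free amalgamation, then peel off the last coordinate. However, neither of the two mechanisms you propose works. What is missing in both is the same idea: the induction hypothesis has to be applied to whole rows $v^{i:a}|^{-i}$, which are balanced by Lemma~\ref{lem:balanced-projected-subvector}.

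\textbf{Repair step.} Treating conflicting tuples one cog at a time cannot remove a conflict. Suppose $(i,a)$ is a conflicting location and $c_i=a$. Subtracting $v(c)\cdot(c\between\tilde c)$ deletes $c$, but adds the tuples $c|^{I\setminus J}\tilde c|^{J}$ for every nonempty $J\subseteq I\setminus\{i\}$, and all of these still carry $a$ in position $i$. With independently chosen fresh atoms these never cancel, so $(i,a)$ remains a conflicting location whenever $|I|\geq 2$. Your displayed $v-\sum_c v(c)\,(c\between d'z)$ fails for the same reason. In the extended example, the cog $gh\between g'i$ works only because it reuses the atom $i$ from the other tuple of the row, so it replaces the whole row $gi-gh$ by $g'i-g'h$. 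In general one needs several cogs with coordinated atoms. These come from a cog decomposition of $v^{i:a}|^{-i}$ in $\mathcal{O}^{i:a}|^{-i}$, obtained by induction on $|I|$ and lifted by Lemma~\ref{lem:cog-fresh-single} to duos $a^+a\parallel a^-a'$. Then $v-v^{i:a}+(v^{i:a}|^{-i})\,a'$ differs from $v$ by cogs and has lost the location $(i,a)$. Choosing $a'$ via Lemma~\ref{lem:free-fresh} to be unrelated to the atoms conflicting with $(i,a)$, an inner induction on the number of conflicting locations (equational first, then relational) terminates.

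\textbf{Merging step.} The vector $v|^{-j}$ to which you apply the induction hypothesis is $0$, so nothing is gained. The vectors that matter are $u_x=v^{j:x}|^{-j}$, one for each atom $x$ in position $j$. They satisfy $\sum_x u_x=v|^{-j}=0$, hence $v=\sum_x\bigl((u_x)\,x-(u_x)\,z\bigr)$. To express $(u_x)\,x-(u_x)\,z$ by cogs you need $\mathcal{O}$-duos $a^+x\parallel a^-z$ with a single $z$ serving several $x$'s. Lemma~\ref{lem:cog-fresh-single} cannot provide this: its fresh atom is tied to the duos at hand, so each row would merely move to its own $x'$. Moreover, cogs of $\mathcal{O}|^{-j}$ over $S$ need not even satisfy $a^+x\in\mathcal{O}$. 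The paper decomposes $u_x$ in $\mathcal{O}^{j:x}|^{-j}$ and then builds $z$ relative to all atoms of these decompositions. That is why it needs the strengthened conclusion of Claim~\ref{claim:!-free-decomposition} and the conflict-resolution Lemmas~\ref{lem:?} and~\ref{lem:!}. A requirement list like yours, mentioning only $S$ and the atoms of $\vsup{v}$, would suffice if you chose $z$ first. You would then apply the induction hypothesis to $u_x$ in the $\Aut(\A/S\cup\{x,z\})$-orbit of its tuples, and every resulting duo lifts automatically to $a^+x\parallel a^-z$. Finally, your free-amalgamation argument does not need related atoms to share a tuple. It needs that, in a conflict-free support, distinct atoms in the same position are unrelated, and atoms in different positions, when related, are related exactly as the corresponding coordinates of $\mathcal{O}$. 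A forbidden $F\ni z$ then contains no atom in position $j$ and maps injectively into a single tuple.
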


\begin{remark}\label{rem:FF-EE}
We must tread carefully here, as $\EE$ is not a field.
For example $\EE$ can be $\{ (\kappa, -\kappa) \mid \kappa \in \FF \}$ in $\FF^2$. 
Then $\Lin_\EE \cal O$ is an equivariant subspace of $\Lin_{\FF^2} \cal O$; 
the latter is more traditionally viewed as $\Lin_\FF \cal O \oplus \Lin_\FF \cal O$.

Given $\lambda \in \EE$ and $a \in \cal O$, we need to understand
\(
    \lambda \cdot a
\)
as a formal expression for an element of $\Lin_\EE \cal O$, rather than the result of a scalar multiplication.
Accordingly, we need to redefine $\Cog_\EE\cal O$ to be spanned by formal expressions $\lambda\cdot(a \between b)$ for $\lambda\in\EE$. 
We still have $\Lin_{\FF^n} \cal O \supseteq \Lin_\EE \cal O \supseteq \Ker_\EE \cal O \supseteq \Cog_\EE \cal O$ as equivariant spaces over $\FF$.
\lipicsEnd\end{remark}

The proof of Theorem~\ref{thm:cog-span-generally} follows the lines of the example in Section~\ref{ex:extended-example}, by carefully removing atom conflicts from a vector before inventing a fresh atom that creates enough duos and cogs. Technical details are given in Appendix~\ref{sec:appendix-cogs}.

Putting $\EE = \FF$ in Theorem~\ref{thm:cog-span-generally}, we have~--~as discussed at the end of Section~\ref{subsec:finding-cogs}~--~established Theorem~\ref{thm:ordered-free-amalg-has-finite-length}.
But by allowing $\EE$ to be finite-dimensional vector spaces, we can now describe all equivariant subspaces of an orbit-finite-dimensional vector space: 
we need only look at local sums of coefficients.

\begin{theorem}[Compare with~{\cite[Cor.~3.17]{Gray97},~\cite[Thm.~15]{HofmanLerouxTotzke_17}, \cite[Thm.~3.4]{HofmanRozycki_2022}, \cite[Sec.~6]{GhoshHofmanLasota_22}, \cite[Thm.~1.4]{Evans_pm1}}]\label{thm:coeff-approximation}
    Let $\A$ be as in Theorem~\ref{thm:ordered-free-amalg-has-finite-length} and fix $d \in \{1, 2, \dots\}$. 
    Then there exists a finite family of equivariant linear maps of the form
    \[
        \restriction_i : \Lin_\FF \A^d \to \Lin_{\FF^{n_i}} \cal O_i,
    \]
    where $\cal O_i$ is an equivariant ordered orbit of $\A^{d_i}$ with $d_i \leq d$, 
    such that every equivariant subspace $W \subseteq \Lin_\FF \A^d$ is equal to
    \[
        \{v \in \Lin_\FF \A^d \mid\ \forall i, \forall a \in \cal O_i : v{\restriction_i}(a) \in \EE_i\ \}
    \]
    with the finite-dimensional subspaces $\EE_i \subseteq \FF^{n_i}$ given by
    \[
        \EE_i = \{w {\restriction_i}(b) \mid\ w \in W, b \in \cal O_i\ \}.
    \]
\end{theorem}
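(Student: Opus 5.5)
Theorem~\ref{thm:coeff-approximation} is obtained by iterating the projection maps and applying Theorem~\ref{thm:cog-span-generally} with vector-valued coefficients. First we choose the maps $\restriction_i$. Via the $\Aut(\A)$-equivariant bijection of $\A^d$ with a disjoint union of ordered orbits (Claim~\ref{claim:reduction-to-one-orbit}), we may assume $\Lin_\FF\A^d=\bigoplus_k \Lin_\FF \cal O_k$. For each ordered orbit $\cal O\subseteq\A^I$ with $|I|\le d$ that arises, and each way of reaching it from some $\cal O_k$ by a sequence of projections $(-)|^J$, we have an equivariant linear map $\Lin_\FF\A^d\to\Lin_\FF\cal O$. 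There are finitely many such composites, by oligomorphicity and $|I|\le d$. We bundle all composites landing in the same $\cal O$ into a single map $\restriction:\Lin_\FF\A^d\to\Lin_{\FF^{n}}\cal O$, where $n$ is the number of composites and the $j$-th coordinate of $v{\restriction}(a)$ is the value at $a$ of the $j$-th composite. We also include the identity component for each $\cal O_k$.

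Fix an equivariant $W$, and let $\widehat W$ be the right-hand side of the statement. The inclusion $W\subseteq\widehat W$ is immediate from the definition of $\EE_i$. For the converse, take $v\in\widehat W$ and show $v\in W$ by induction on the orbit dimension, from orbits of dimension $0$ upward. Ordering the orbits by dimension, we peel off components as follows.

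For the base case, consider the $0$-dimensional orbit $\{()\}$. We have $v{\restriction}(())\in\EE$, so there is a $w\in W$ with the same coefficient vector, or, since $\EE$ is spanned by such vectors, a linear combination of elements of $W$. Replacing $v$ by $v-w$, we may assume all $0$-dimensional restrictions vanish.

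For the inductive step, suppose all restrictions of $v$ to orbits of dimension below $m$ vanish. Consider an orbit $\cal O$ of dimension $m$ and its bundled map. Because every further projection of $v{\restriction}$ is one of the lower restrictions, $v{\restriction}$ is balanced, so $v{\restriction}\in\Ker_\EE\cal O$, where $\EE$ is computed from $W$. By Theorem~\ref{thm:cog-span-generally} we can write $v{\restriction}=\sum\lambda_{a\parallel b}\, a\between b$ with $\lambda\in\EE$. Each $\lambda$ is a linear combination of vectors $w{\restriction}(c)$ with $w\in W$. By equivariance, moving $c$ to $a$, and by Theorem~\ref{thm:cogs-arise-everywhere}, applied coordinatewise in the spirit of its proof, we aim to produce $w'\in W$ with $w'{\restriction}=\lambda\cdot(a\between b)$ and with all lower restrictions of $w'$ equal to zero.

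This is the main obstacle: the differencing $w\mapsto w-\pi_i w$ used in Theorem~\ref{thm:cogs-arise-everywhere} must be arranged to isolate a single coefficient vector in every coordinate of the bundled map simultaneously. It must also leave lower-dimensional restrictions zero, which holds because differencing kills projections. The plan is to pick support $S^*$ as in that proof, so the same automorphisms $\pi_i$ work for all coordinates at once. Subtracting these $w'$ then kills the $\cal O$-restriction without disturbing the lower ones. After the top orbits, namely the identity components, are handled, what remains is $v-(\text{element of }W)=0$, so $v\in W$.
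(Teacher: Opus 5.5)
Your proposal is correct and is essentially the paper's proof. It uses the same bundled projection maps and peels $\widehat W$ in the same way, from the $0$-dimensional orbit upward via balancedness and Theorem~\ref{thm:cog-span-generally} with coefficients in $\EE$; your ``main obstacle'' is exactly Claim~\ref{claim:cogs-arise-everywhere}, settled by $w'=\prod_j(\mathrm{id}-\pi_j)w$ with the $\pi_j$ fixing every atom of $w$ outside $a$. The reason $w'$ disturbs nothing else is that any tuple in a restriction of $w$ that does not contain all atoms of $a$ is fixed by some $\pi_j$ and hence annihilated; you should state explicitly that this also kills the restrictions to the other orbits of the same dimension, since you process those one at a time (in the paper this is the computation $C_{i'}=\emptyset$ for every orbit $\mathcal{Q}_{i'}$ after $\mathcal{Q}_i$ in a linear ordering by decreasing dimension).
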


\begin{example}
    Let $\A$ be the ordered atoms. 
    For $d = 1$ there are two maps, $\restriction_{1} : \Lin_\FF \A \to \Lin_\FF \A$ and $\restriction_{2} : \Lin_\FF \A \to \FF$, given by  
    \[
        v {\restriction_1} (a) = v(a), \qquad
        v {\restriction_2} () = \sum_{x} v(x).
    \]
    For an equivariant vector space $V \subseteq \Lin_\FF \A$, the subspace $V {\restriction_1}(\A) \subseteq \FF$ is either $\{0\}$ or $\FF$.
    \begin{itemize}
        \item In the first case, $V$ must be the zero space.
        \item In the second case, we similarly distinguish two cases for $V {\restriction_2}() \subseteq \FF$:
        \begin{itemize}
            \item if it is the entire $\FF$, then $V$ must be the full space $\Lin_\FF \A$ by Theorem~\ref{thm:coeff-approximation};
            \item if it is $\{0\}$, then $V$ must be the zero-sum space (which exists and is spanned by $\{a - b \mid a, b \in \A\}$) again using Theorem~\ref{thm:coeff-approximation}.
        \end{itemize}
    \end{itemize}
    So $\Lin_\FF \A$ has the same structure as the space over the equality atoms in Example~\ref{ex:length-one-dim-one}.

    For $d = 2$, there are three maps, one of which is of the form $\restriction_2 : \Lin_\FF \A^2 \to \Lin_{\FF^5} \A$. 
    By considering subspaces of $\FF^5$, when the field is infinite we can find infinitely many equivariant subspaces in $\Lin_\FF \A^2$.
    Note that the length of $\Lin_\FF \A^2$ is still finite (and equal to $2^1 + 2^2 + 2^2$, as we will see below).
\lipicsEnd\end{example}

Going through the finite-dimensional subspaces of the $\FF^{n_i}$'s gives a complete list of the equivariant subspaces of $\Lin_\FF \A^d$, by Theorem~\ref{thm:coeff-approximation} (with proof in Appendix~\ref{sec:appendix-eqsubsp}), but note that it is far from being irredundant.
In particular, to bound the length of $\Lin_\FF \A^d$ from below, we still need to exhibit a long chain of equivariant subspaces.
It is fortunately straightforward to adapt \cite[Cor.~4.12]{BFKM24} and establish an exact length:
\begin{corollary}\label{cor:2d-length}
    The length of $\Lin_\FF \cal O$, where $\cal O \subseteq \A^d$ is an ordered orbit, is precisely $2^d$.
\end{corollary}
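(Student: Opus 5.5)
We prove that the length of $\Lin_\FF \cal O$ equals $2^d$ by induction on $d$, with $S$ fixed and equivariance meaning $\Aut(\A/S)$-equivariance. We establish the upper bound and the lower bound separately.

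\textbf{Upper bound.} Combining Theorem~\ref{thm:cog-span-generally} (with $\EE=\FF$) and Claim~\ref{claim:cogs-are-balanced} gives $\Ker_\FF\cal O=\Cog_\FF\cal O$. By Corollary~\ref{cor:cog-simple} this space has length exactly $1$. Consider the map $v\mapsto (v|^{-i})_{i\in I}$ from the proof of Claim~\ref{claim:reduction-to-kernel}. Its kernel is $\Ker_\FF\cal O$, so
\[
\text{length of }\Lin_\FF\cal O \;\le\; 1+\sum_{i\in I}\text{length of }\Lin_\FF\cal O|^{-i}.
\]
Each $\cal O|^{-i}$ is an $S$-ordered orbit in dimension $d-1$. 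Induction therefore only yields $1+d\,2^{d-1}$, which is too weak. The sum over all $i$ overcounts, because the images $v|^{-i}$ satisfy compatibility relations: $(v|^{-i})|^{-j}=(v|^{-j})|^{-i}$.

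To get the sharp bound I would instead project only onto the last coordinate deletion, with $m=\max I$. Let $K=\ker(-)|^{-m}$. The map $(-)|^{-m}$ is surjective: every $a'\in\cal O|^{-m}$ lifts to some tuple in $\cal O$, which is the image of a basis vector. Hence
\[
\text{length of }\Lin_\FF\cal O=\text{length of }\Lin_\FF\cal O|^{-m}+\text{length of }K.
\]
The first term is $2^{d-1}$ by induction. It then suffices to show that $K$ has length at most $2^{d-1}$. For this I would filter $K$ by the remaining projections. Writing $I'=I\setminus\{m\}$, the kernel $K$ decomposes along the fibres of $(-)|^{-m}$. For fixed $a'\in\cal O|^{-m}$, the fibre is the orbit of the last atom over $S\cup a'$, a $1$-dimensional ordered orbit with an enlarged support. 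The vectors of $K$ are those with zero sum on each fibre, which is a zero-sum space of the kind in Example~\ref{ex:length-one-dim-one}. I would then show that the maps $(-)|^{J\cup\{m\}}$ with $J\subseteq I'$ restrict to $K$ and induce a filtration whose successive quotients embed into balanced spaces $\Ker_\FF$ of orbits indexed by subsets $J\cup\{m\}$. Each such quotient has length at most $1$, and there are $2^{d-1}$ choices of $J$. This gives the bound.

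The cleaner formulation, which I would aim to prove, is a filtration of $\Lin_\FF\cal O$ itself indexed by the subsets $J\subseteq I$, as follows. Take the subspaces
\[
F_{\mathcal U}=\bigcap_{J\notin \mathcal U}\ker(-)|^{J},
\]
where $\mathcal U$ ranges over up-closed families of subsets of $I$, refined to a maximal chain of such families. Each step adds a single subset $J$. The corresponding quotient maps injectively via $(-)|^J$ into $\Ker_\FF(\cal O|^J)$: a vector killed by all $(-)|^{J'}$ with $J'\subsetneq J$ projects to a balanced vector. By Theorem~\ref{thm:cog-span-generally} and Corollary~\ref{cor:cog-simple}, each quotient has length at most $1$. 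This gives at most $2^d$ steps.

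\textbf{Lower bound.} I would show that each of these quotients is nonzero, which makes the chain strict and gives length exactly $2^d$. For a subset $J$, take an $\cal O|^J$-cog $a\between b$ (it exists by Lemma~\ref{lem:cog-fresh-full}) and extend each tuple occurring in it to a tuple of $\cal O$ by a common choice of the coordinates outside $J$. Such a common completion exists by homogeneity together with Lemma~\ref{lem:free-fresh}, which lets us pick fresh atoms unrelated to the others. The resulting vector $u$ satisfies $u|^J=a\between b\ne0$, and $u|^{J'}=0$ for every $J'$ not containing $J$, since the cancellation argument of Claim~\ref{claim:cogs-are-balanced} applies to any $i\in J\setminus J'$. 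So $u$ lies in the right filtration step but not in the one below it.

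The main obstacle is checking that the filtration steps nest correctly and that the quotient maps are injective into the balanced spaces, that is, correctly adapting the bookkeeping of \cite[Cor.~4.12]{BFKM24}. The extension of cogs in the lower bound must also respect the relations of $\sigma_0$. This is where free amalgamation, through Lemma~\ref{lem:free-fresh}, is needed.
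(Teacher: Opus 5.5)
Your final argument is correct, but for the upper bound it takes a genuinely different route from the paper.

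\textbf{Upper bound.} The paper first proves Theorem~\ref{thm:coeff-approximation}: every equivariant $W\subseteq\Lin_\FF\mathcal{O}$ is determined by the finite-dimensional data $W{\restriction_i}(\mathcal{Q}_i)\subseteq\FF^{\Jclass_i}$. That theorem needs Theorem~\ref{thm:cog-span-generally} for arbitrary coefficient spaces $\EE\subseteq\FF^n$, as well as the analysis of equivariant bijections between projected orbits. The bound $2^{|I|}=\sum_i|\Jclass_i|$ then follows by counting dimensions along a chain.

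Your filtration $F_{\mathcal U}$ by up-closed families is instead a composition-series argument. At the step that adds $J$, up-closure puts every proper subset of $J$ outside the family. Hence $(-)|^J$ embeds the subquotient equivariantly into $\Ker_\FF(\mathcal{O}|^J)=\Cog_\FF(\mathcal{O}|^J)$, which has length $1$ by Corollary~\ref{cor:cog-simple}. This route has several advantages:
\begin{itemize}
\item it uses Theorem~\ref{thm:cog-span-generally} only with $\EE=\FF$;
\item it needs no induction on $d$, so the intermediate paragraph about $(-)|^{-m}$ can simply be dropped;
\item it identifies the composition factors as the cog spaces $\Cog_\FF(\mathcal{O}|^J)$, one for each $J\subseteq I$.
\end{itemize}
What it does not give is the explicit description of all equivariant subspaces, which the paper wants anyway for deciding orbit-finite linear systems.

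\textbf{Lower bound.} Here you use essentially the paper's witnesses $v_J=\prod_{j\in J}(1-\pi_j)a$. The difference is that you use them to show your filtration is strict, whereas the paper builds a chain of orbit spans from them.

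One justification should be tightened. Your appeal to ``homogeneity together with Lemma~\ref{lem:free-fresh}'' for the common completion of an arbitrary $\mathcal{O}|^J$-cog is vague. The completion does exist, but the cleanest way to get it is different. Take a full $\mathcal{O}$-duo $a\parallel b$ from Lemma~\ref{lem:cog-fresh-full} and restrict it to $J$. Remark~\ref{rem:duo} already places every mixed tuple $a|^{I\setminus K}b|^{K}$ in $\mathcal{O}$, so no separate completion step is needed.
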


As described in~\cite[Sec.~9]{GhoshLasota_24}, results like Theorem~\ref{thm:coeff-approximation} allow us to decide the solvability of orbit-finite systems of linear equations. 
We briefly repeat the argument.
Checking whether the system $\mathbf{A} \mathbf{x} = \mathbf{b}$ admits a solution amounts to checking whether $\mathbf{b}$ is spanned by the columns of $\mathbf{A}$.
In the orbit-finite setting, we assume that the rows and columns are indexed by $\A^d$ and $\A^{d'}$, that each column has finitely many non-zero entries, and that the definition $j \mapsto \mathbf{A}_{-, j}$ of columns is equivariant.
That is, we ask whether $\mathbf{b} \in \Lin_\FF \A^d$ lies in the span of the $\mathbf{A}_{-, j}$'s. 
By Theorem~\ref{thm:coeff-approximation}, it suffices to check whether $\mathbf{b} {\restriction_i} (\cal O_i) \subseteq \mathbf{A}_{-, j_1} {\restriction_i} (\cal O_i) + \cdots + \mathbf{A}_{-, j_r} {\restriction_i} (\cal O_i)$ for all $i$, in the finite-dimensional spaces $\FF^{n_i}$, having chosen orbit representatives $j_1, \dots, j_r \in \A^{d'}$.

%% file: src/Conclusion.tex
\section{Conclusion}
With Theorems~\ref{thm:weak-smooth-approximation-finite-length} and~\ref{thm:ordered-free-amalg-has-finite-length} and their Corollaries~\ref{cor:weak-finite-length} and~\ref{cor:lachlan-woodrow}, 
we have extended the finite length property far beyond equality and ordered atoms, the two examples considered in~\cite{BFKM24}. 

We finish the paper by reiterating some general questions that nonetheless remain open:
\begin{itemize}
    \item \cite[Q.~2]{CaminaEvans_91}
    Does every oligomorphic structure have the ascending chain property? 
    
    \item \cite[Q.~1.2]{Evans_pm1}
    Does every structure that is homogeneous over a finite relational vocabulary have the finite length property?
\end{itemize}
We also ask a new one, prompted by the limited but growing stock of examples: 
\begin{itemize}
    \item 
    Does every oligomorphic structure have the finite length property over fields of characteristic $0$?
\end{itemize}
We do not even know the answer to these questions for some concrete and well-studied Fraïssé limits, 
notably the universal partial order or the countable atomless Boolean algebra.

%% file: src/appendix-acp.tex
\section{Proof of Theorem~\ref{thm:ACP}}\label{sec:appendix-modules}
    Let us recall the proof of (\ref{item:sub-ofs-is-ofs}) $\Leftrightarrow$ (\ref{item:ofs-implies-Noetherian}) from classical module theory. 
    For (\ref{item:ofs-implies-Noetherian}) $\Rightarrow$ (\ref{item:sub-ofs-is-ofs}), we build an orbit-finite spanning set for the subspace greedily, by adding orbit after orbit of vectors. 
    Thanks to (\ref{item:ofs-implies-Noetherian}), this must stabilise. 
    For (\ref{item:sub-ofs-is-ofs}) $\Rightarrow$ (\ref{item:ofs-implies-Noetherian}), we use (\ref{item:sub-ofs-is-ofs}) to get an orbit-finite spanning set for the union of the chain from (\ref{item:ofs-implies-Noetherian}), 
    and then we observe that orbits from this set can only be added finitely often in the chain. 

    Let us now prove the ``furthermore'' part. 
    \begin{itemize}
        \item 
        In one direction, consider equivariant subspaces $W \subseteq U \subseteq \Lin_\field \A^d$.
        As $\A$ is oligomorphic, the basis $\A^d$ is orbit-finite; 
        in particular $\Lin_\field \A^d$ is orbit-finitely spanned.
        But so is $U$ by~(\ref{item:sub-ofs-is-ofs}), as is the quotient $U/W$. 
        \item 
        In the other direction, take some orbit-finitely spanned vector space $V$. 
        By Definition~\ref{def:orbit-finite-set}, the spanning set can be obtained from some equivariant subset $X$ of $\A^d$ by quotienting under an equivariant equivalence relation. 
        This gives us a surjective equivariant linear map from the equivariant subspace $U = \Lin_\field X \subseteq \Lin_\field \A^d$ to $V$; 
        call its kernel $W$, so that we have a bijective equivariant linear map $U/W \to V$. 
    \end{itemize}

    Finally the implication (\ref{item:ofs-implies-Noetherian}) $\Rightarrow$ (\ref{item:LinAd-is-Noetherian}) is immediate, 
    whilst the converse implication follows from the ``furthermore'' part: 
    if there were an infinite chain $V_0 \subset V_1 \subset \cdots$ in $U/W$,
    putting \[
        U_i = \{u \in U \mid\text{ $u+W$ is in $V_i$ }\}
    \] would give rise to an infinite chain $U_0 \subset U_1 \subset \cdots$ in $U$ and also in the bigger $\Lin_\field \A^d$.
\qed

%% file: src/appendix-symplectic.tex
\section{Proof of Theorem~\ref{thm:have-oligomorphic-approximation}(\ref{item:rado-has-oligomorphic-approximation})}\label{sec:appendix-symplectic}
To show that the Rado graph is oligomorphically approximated, we introduce a family of graphs defined from symplectic vector spaces over the two-element field.
These spaces, described as a classical geometry, are treated in classical textbooks such as \cite[pp.~75--83]{Aschbacher_symplectic} and used as a basic building block in~\cite{KLM89}.
Nonetheless, we still find it useful to present a self-contained exposition here, aimed at a broader audience.
We will first recall the proof that the countable-dimensional symplectic vector space is smoothly approximated,
and then explain how this helps with the Rado graph.

\subsection{Symplectic vector spaces}
Fix a finite field $\ff$.
\begin{definition}
    A \emph{symplectic vector space} is a $\ff$-vector space $\W$ 
    equipped with a bilinear form $\omega: \W \times \W \to \ff$ that is
    \begin{bracketenumerate}
        \item \emph{alternating}: $\omega(v, v) = 0$ for all $v$; and
        \item \emph{non-degenerate}: if $\omega(v, w) = 0$ for all $w$, then $v = 0$.
    \end{bracketenumerate}    
    Model-theoretically, we start with the vocabulary of the vector atoms from Example~\ref{ex:vector-space-atoms} 
    and, to speak about $\omega$, add a binary relation $\omega(-, -) = \lambda$ for every $\lambda \in \ff$.
\end{definition}

\begin{example}\label{ex:canonical-symplectic-space}
    Let $\W_n$ be the $\ff$-vector space with basis $e_1  , \ldots, e_n, f_1, \ldots, f_n$.
    Define $\omega$ by bilinearly extending
    \begin{equation}\label{eq:symplectic-basis}
        \omega(e_i, f_i) = 1 = -\omega(f_i, e_i),\quad
        \omega(-, *) = 0 \text{ elsewhere;}
        \tag{$\ddagger$}
    \end{equation}
    one may straightforwardly check that $\omega$ is alternating and non-degenerate.
    Moreover, as $\W_0 \subseteq \W_1 \subseteq \W_2 \subseteq \cdots$,
    we obtain a countable-dimensional symplectic vector space $\bigcup_n \W_n$.
\lipicsEnd\end{example}

We will refer to any vectors $e_1, \dots, e_n, f_1, \dots, f_n$ satisfying~\eqref{eq:symplectic-basis} as a \emph{symplectic subbasis}. 
(This is not standard terminology.)
As the name suggests, such vectors must be linearly independent:
if \(
    v = \sum_i \lambda_i e_i + \mu_i f_i
\) is the zero vector, then $\lambda_i = \omega(v, f_i) = 0$ and $\mu_i = \omega(e_i, v) = 0$ for each $i$.
We shall see that symplectic subbases behave very much like linearly independent subsets.

\begin{lemma}\label{lem:symplectic-basis}
    Assume that $\W$ is a symplectic vector space that is at most countable.
    Then any finite symplectic subbasis $e_1, \dots, e_n, f_1, \dots, f_n$
    can be extended to a symplectic subbasis that spans the whole $\W$ (i.e.~a symplectic basis).
\end{lemma}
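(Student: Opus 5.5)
The approach is a symplectic Gram--Schmidt procedure. Enumerate $\W$ as $w_1, w_2, \dots$, which is possible since $\W$ is at most countable. We build an increasing chain of finite symplectic subbases $B_0 \subseteq B_1 \subseteq \cdots$, starting from $B_0 = \set{e_1,\dots,e_n,f_1,\dots,f_n}$, such that $w_k$ lies in the span of $B_k$. The union of the chain is then a symplectic subbasis: relations~\eqref{eq:symplectic-basis} only involve pairs of vectors, and any two vectors already lie in a common $B_k$. The union also spans $\W$, which gives the lemma.

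The key tool is the symplectic projection. For a finite symplectic subbasis $B$ with pairs $(e_i,f_i)$, $i \le m$, and any $v\in\W$, define
\[
  v^\perp = v - \sum_{i\le m} \omega(v,f_i)\, e_i + \sum_{i \le m} \omega(v,e_i)\, f_i .
\]
Bilinearity and~\eqref{eq:symplectic-basis} should give $\omega(v^\perp, e_j) = 0 = \omega(v^\perp, f_j)$ for every $j$; I expect this to be a routine check, though the signs need care. So $v^\perp$ lies in the $\omega$-orthogonal complement $B^\perp$ of $\operatorname{span} B$, and $v \in \operatorname{span}B + \set{v^\perp}$.

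The inductive step goes as follows. Given $B_{k-1}$, let $u = w_k^\perp$. If $u = 0$, then $w_k$ is already in the span and we set $B_k = B_{k-1}$. Otherwise we must find a partner $u' \in B_{k-1}^\perp$ with $\omega(u,u') = 1$, and then take $B_k = B_{k-1} \cup \set{u, u'}$, writing $e_{m+1}=u$ and $f_{m+1}=u'$. The alternating property gives $\omega(u,u)=0$. Alternating also implies skew-symmetry, since expanding $\omega(x+y,x+y)=0$ yields $\omega(x,y)=-\omega(y,x)$; this secures the relations $\omega(f_{m+1},e_{m+1}) = -1$. Orthogonality to $B_{k-1}$ holds because both new vectors are in $B_{k-1}^\perp$, so~\eqref{eq:symplectic-basis} holds for the enlarged family.

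The main obstacle is finding $u'$, which is where non-degeneracy is used. Because $u \neq 0$, non-degeneracy gives some $x \in \W$ with $\omega(u,x) \ne 0$, and after scaling, $\omega(u,x) = 1$. We then replace $x$ by $x^\perp$. Since $u \in B_{k-1}^\perp$, we have $\omega(u, e_i) = \omega(u,f_i) = 0$, so the correction terms in $x^\perp$ do not change the pairing: $\omega(u,x^\perp) = \omega(u,x) = 1$. Thus $u' = x^\perp$ works. The one point to verify carefully is that projecting $x$ does not destroy the pairing with $u$, and this is exactly the computation just described.
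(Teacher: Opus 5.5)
Your proposal is correct and follows essentially the same symplectic Gram--Schmidt argument as the paper. Your projection $v^\perp$ coincides, via skew-symmetry, with the paper's formula for $e_{n+1}$ and $f_{n+1}$, and non-degeneracy is used in the same way to find the partner vector; you simply make explicit the countable chain and the check that projecting $x$ keeps $\omega(u,x)=1$, which the paper leaves as ``in a similar manner''.
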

\begin{proof}
    Suppose that $e_1, \ldots, e_n, f_1, \ldots, f_n$ do not already span $\W$;
    take $v$ to be a witness (that is least with respect to some fixed enumeration of $\W$ 
    in the case it is infinite).
    Put
    \[
        e_{n+1} = v - \sum_{i=1}^n \omega(e_i, v) f_i + \sum_{i=1}^n \omega(f_i, v) e_i
    \]
    so that $\omega(e_i, e_{n+1}) = 0 = \omega(f_i, e_{n+1})$ for $1 \leq i \leq n$.
    This cannot be the zero vector lest we contradict the choice of $v$.
    As $\omega$ is non-degenerate, there is~--~rescaling if necessary~--~some $w$ such that $\omega(e_{n+1}, w) = 1$. 
    Now define
    \[
        f_{n+1} = w - \sum_{i=1}^n \omega(e_i, w) f_i + \sum_{i=1}^n \omega(f_i, w) e_i
    \]
    in a similar manner, 
    making $e_1, \ldots, e_n, e_{n+1}, f_1, \ldots, f_n, f_{n+1}$ a symplectic subbasis whose span contains $v$.
    We go through every element of $\W$ by continuing this way.
\end{proof}

We may even allow some $e_i$ to appear without a corresponding $f_i$, under a finiteness assumption:
\begin{lemma}\label{lem:symplectic-basis-and-a-half}
    Now assume $\W$ is a finite-dimensional symplectic vector space.
    Let 
    \begin{align*} 
        &e_1, \ldots, e_n, e_{n+1}, \ldots, e_{n+k}, \\
        &f_1, \ldots, f_n
    \end{align*}
    be linearly independent vectors satisfying \eqref{eq:symplectic-basis}.
    Then we can extend this collection to a symplectic basis of $\W$.
\end{lemma}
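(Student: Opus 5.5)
The plan is to reduce to Lemma~\ref{lem:symplectic-basis} by first supplying partners $f_{n+1},\dots,f_{n+k}$ for the unpaired vectors $e_{n+1},\dots,e_{n+k}$. We do this one index at a time, preserving linear independence and the relations \eqref{eq:symplectic-basis} at every stage. Once all $k$ partners are found, we have a finite symplectic subbasis, and Lemma~\ref{lem:symplectic-basis} extends it to a symplectic basis of $\W$; finite dimension is certainly at most countable.

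For the inductive step, suppose we have linearly independent vectors $e_1,\dots,e_{m},e_{m+1},\dots,e_{n+k}$ and $f_1,\dots,f_m$ satisfying \eqref{eq:symplectic-basis}, with $m<n+k$. We seek $f_{m+1}$ such that:
\begin{itemize}
\item $\omega(e_{m+1},f_{m+1})=1$;
\item $\omega(e_j,f_{m+1})=0$ for all $j\neq m+1$;
\item $\omega(f_i,f_{m+1})=0$ for $i\le m$.
\end{itemize}
These are $(n+k)+m$ linear conditions on $f_{m+1}$, one for each vector of the current collection.

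Non-degeneracy in finite dimension makes the map $\W\to\W^*$, $w\mapsto\omega(-,w)$, a linear isomorphism. The current vectors are linearly independent, so any prescribed values of $\omega(u,-)$ on them extend to a linear functional on $\W$. That functional is then represented by some $w$, and we take $f_{m+1}=w$. This is the only place where finite dimension is used: in the countable case $\W\to\W^*$ need not be surjective. Since the functional is determined on all of $\W$, this step has no real obstacle.

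It remains to check that the enlarged collection is still linearly independent. Suppose $f_{m+1}=\sum_j\lambda_je_j+\sum_{i\le m}\mu_if_i$. Pairing both sides with $e_{m+1}$ via $\omega(e_{m+1},-)$ gives $1$ on the left. On the right, $\omega(e_{m+1},e_j)=0$ for all $j$, and $\omega(e_{m+1},f_i)=0$ for $i\le m$ because $i\neq m+1$. So the right side gives $0$, a contradiction.

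The relations \eqref{eq:symplectic-basis} also hold for the new pair. Since $\omega$ is alternating, it is skew, so $\omega(f_{m+1},e_{m+1})=-1$; the remaining pairings vanish by construction and skewness. After $k$ steps the collection is a full symplectic subbasis, and Lemma~\ref{lem:symplectic-basis} finishes the proof.
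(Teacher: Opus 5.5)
Your proof is correct and follows the paper's approach. You pair the unpaired $e_{n+i+1}$ one at a time by realising the functional that is $1$ on $e_{n+i+1}$ and $0$ on the other current vectors as $\omega(-,f_{n+i+1})$, and then finish with Lemma~\ref{lem:symplectic-basis}; the only difference is cosmetic: the paper writes this functional down by an explicit formula in a symplectic basis of $\W$ (obtained from Lemma~\ref{lem:symplectic-basis}), where you use the isomorphism $\W\cong\W^*$, whose injectivity needs skew-symmetry together with non-degeneracy.
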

\begin{proof}
    Suppose we have found $f_{n+1}, \ldots, f_{n+i}$ already such that
    \begin{align*}
        &e_1, \ldots, e_n, e_{n+1}, \ldots, e_{n+i}, e_{n+i+1}, e_{n+i+2}, \ldots, e_{n+k},\\ 
        &f_1, \ldots, f_n, f_{n+1}, \ldots, f_{n+i}
    \end{align*}
    satisfy \eqref{eq:symplectic-basis}.
    Notice that these vectors are linearly independent:
    in a linear combination that sums to $0$,
    the coefficients of $e_1, \dots, e_{n+i}, f_1, \dots, f_{n+i}$ must be zero;
    but we assumed that $e_{n+i+1}, \dots, e_{n+k}$ are linearly independent.
    By extending these linearly independent vectors to a basis $B$ of $\W$, we may define a linear function 
    \(
        \psi : \W \to \ff
    \)
    that sends $e_{n+i+1}$ to $1$ but every other $b \in B$ to $0$.
    Now apply Lemma~\ref{lem:symplectic-basis} to obtain a new symplectic basis $e'_1, \dots, e'_m, f'_1, \dots, f'_m$ of $\W$, and put
    \[
        f_{n+i+1} = \sum_{j=1}^m \psi(e'_j) f'_j - \psi(f'_j) e'_j.
    \]
    Then $\omega(-, f_{n+i+1})$ agrees with $\psi$ on this symplectic basis, so by linearity they must be the same function.
    In particular 
    \(
        \omega(e_{n+i+1}, f_{n+i+1}) = \psi(e_{n+i+1}) = 1,
    \)
    whereas $\psi(e_1), \dots, \psi(e_{n+k}), \psi(f_1), \dots, \psi(f_{n+i})$ are all $0$.
    Thus we have \eqref{eq:symplectic-basis} as required.
    Finally, to complete this symplectic subbasis to a symplectic basis, we use Lemma~\ref{lem:symplectic-basis} again.
\end{proof}

Given two symplectic vector spaces $\W$ and $\W'$,
note that an embedding $\alpha$ from $X \subseteq \W$ to $X' \subseteq \W'$ is an injective linear map (from the span of $X$ to the span of $X'$) that satisfies $\omega(x_1, x_2) = \omega(\alpha(x_1), \alpha(x_2))$ for all $x_1, x_2 \in X$.
We call any map satisfying the latter condition \emph{isometric}. 
(This is more standard terminology.)

Now, if $e_1, \dots, e_n, f_1, \dots, f_n$ is a symplectic basis for $\W$ and $e'_1, \dots, e'_n, f'_1, \dots, f'_n$ is a symplectic basis for $\W'$ of the same size,
then $e_i \mapsto e'_i, f_i \mapsto f'_i$ defines an isometric linear bijection $\W \to \W'$, i.e.~an isomorphism.
It follows by Lemma~\ref{lem:symplectic-basis} that, up to isomorphism, the $\W_n$'s from Example~\ref{ex:canonical-symplectic-space} are all the finite-dimensional symplectic $\ff$-vector spaces;
similarly, $\bigcup_n \W_n$ is the unique countable(-dimensional) symplectic $\ff$-vector space.
We shall now show that they are all homogeneous:

\begin{lemma}[Witt's Extension Theorem] \label{lem:symplectic-witt-extension}
    For $n = 0,1,\dots$, any embedding $\alpha$ of $X \subseteq \W_n$ into $\W_n$ 
    can be extended to an automorphism of $\W_n$.
\end{lemma}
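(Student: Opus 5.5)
We reduce to the case where $X$ is a linearly independent set: an embedding $\alpha$ of $X$ is by definition an injective isometric linear map from $\operatorname{span} X$ onto $\operatorname{span} \alpha(X)$, so it suffices to extend it from $U = \operatorname{span} X$. We argue by induction on $\dim U$, adjusting the basis of $U$ so that it becomes a partial symplectic subbasis of the kind in Lemma~\ref{lem:symplectic-basis-and-a-half}. Once this is done, the images under $\alpha$ form a partial symplectic subbasis of the same shape, and we can finish with the two basis-extension lemmas.

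The key step is to find a basis of $U$ of the form
\[
e_1,\dots,e_n,e_{n+1},\dots,e_{n+k},\ f_1,\dots,f_n
\]
satisfying~\eqref{eq:symplectic-basis}, i.e.~a symplectic basis of a non-degenerate part of $U$ together with a basis of the radical $U \cap U^{\perp}$. We obtain it by a Gram--Schmidt style procedure restricted to $U$, following the proof of Lemma~\ref{lem:symplectic-basis}. If $\omega$ vanishes identically on $U$, any basis of $U$ works, with $n=0$. Otherwise we pick $u,w \in U$ with $\omega(u,w) \neq 0$, rescale so that it equals $1$, and set $e_1 = u$, $f_1 = w$. We then replace every other basis vector $x$ of $U$ by $x - \omega(e_1,x)f_1 + \omega(f_1,x)e_1$, which lies in $U$ and is orthogonal to $e_1$ and $f_1$. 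We repeat this on the span of the replaced vectors until $\omega$ is identically zero there; the remaining vectors are $e_{n+1},\dots,e_{n+k}$.

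Since $\alpha$ is linear, injective and isometric, the images $\alpha(e_1),\dots,\alpha(e_{n+k}),\alpha(f_1),\dots,\alpha(f_n)$ are linearly independent and also satisfy~\eqref{eq:symplectic-basis}. We apply Lemma~\ref{lem:symplectic-basis-and-a-half} twice in $\W_n$: once to extend the original family, and once to extend the image family, each to a symplectic basis of $\W_n$. Because $\dim \W_n$ is finite and even, both extensions have the same size. We match their elements so that $e_i \mapsto \alpha(e_i)$ and $f_i \mapsto \alpha(f_i)$ for the existing indices (including the $f_{n+1},\dots,f_{n+k}$ created by the lemma), and pair up the newly added pairs arbitrarily. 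This defines an isometric linear bijection of $\W_n$, which is an automorphism, and it agrees with $\alpha$ on a basis of $U$, hence on $X$.

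I expect the main obstacle to be the indexing bookkeeping in the last step. The matching works provided that applying Lemma~\ref{lem:symplectic-basis-and-a-half} to each family first creates partners $f_{n+1},\dots,f_{n+k}$ for the radical vectors, and only then completes to a full symplectic basis. This ordering is exactly what the proof of Lemma~\ref{lem:symplectic-basis-and-a-half} does, so the two extensions line up index by index, and the count of new pairs agrees by comparing dimensions.
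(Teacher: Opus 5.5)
Your proof is correct and follows the paper's argument. You put the span of $X$ into a basis of the shape required by Lemma~\ref{lem:symplectic-basis-and-a-half}, transport it by $\alpha$, extend both families to symplectic bases of $\W_n$, and match them. The only difference is how the adapted basis is built. The paper splits off the radical $X_0$, takes a complement $X_1$ on which $\omega$ is non-degenerate, and applies Lemma~\ref{lem:symplectic-basis} to $X_1$. You instead run a symplectic Gram--Schmidt directly inside $U$; also, rename your number of pairs $n$, since it clashes with the index of $\W_n$.
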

\begin{proof}
    By taking the span of $X$ and extending $\alpha$ linearly, we may assume that $X$ is a subspace of $\W_n$.
    Consider the subspace
    \[
        X_0 = \{ x \in X \mid\text{ $\omega(x, x') = 0$ for all $x' \in X$ }\} 
    \]
    of $X$, and find a complementary subspace $X_1$ so that $X = X_0 \oplus X_1$.  
    Notice that $\omega$ is still non-degenerate (and certainly alternating) when restricted to $X_1$:
    as $\omega(x_1, x'_0 + x'_1) = \omega(x_1, x'_1)$, if $x_1 \in X_1$ satisfies $\omega(x_1, x'_1) = 0$ for all $x'_1 \in X_1$, then $x_1$ must be in $X_0 \cap X_1 = \{0\}$.
    So $X_1$ is a symplectic subspace, and we may apply Lemma~\ref{lem:symplectic-basis} to find a symplectic basis $e_1, \dots, e_d, f_1, \dots, f_d$ for $X_1$. 
    
    Now let $x_1, \dots, x_k$ be any basis for $X_0$.
    Then the vectors
    \begin{align*}
        &e_1, \ldots, e_d, x_1, \ldots, x_k, \\
        &f_1, \ldots, f_d
    \end{align*}
    form a basis for $X$ and satisfy \eqref{eq:symplectic-basis}.
    As $\alpha$ is an isometric linear injection, the vectors
    \begin{align*}
        &\alpha(e_1), \ldots, \alpha(e_d), \alpha(x_1), \ldots, \alpha(x_k), \\
        &\alpha(f_1), \ldots, \alpha(f_d)
    \end{align*}
    form a basis for $\alpha(X)$ and also satisfy \eqref{eq:symplectic-basis}.
    Therefore, we may apply Lemma~\ref{lem:symplectic-basis-and-a-half} twice and extend these to two symplectic bases for $\W_n$.
    The two bases must have the same size, and we thus obtain an automorphism of $\W_n$ extending $\alpha$.
\end{proof}

\begin{corollary}\label{cor:W_infty-is-homogeneous}
    $\bigcup_n \W_n$ is also homogeneous.
\end{corollary}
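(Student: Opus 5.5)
The approach is a back-and-forth argument, using Lemma~\ref{lem:symplectic-witt-extension} on each finite level $\W_n$. Let $\alpha$ be an embedding of a finite $X \subseteq \bigcup_n \W_n$ into $\bigcup_n \W_n$. As in the proof of Lemma~\ref{lem:symplectic-witt-extension}, we first extend $\alpha$ linearly and assume $X$ is a finite-dimensional subspace. We must show that $\alpha$ extends to an isometric linear bijection of the whole space.

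The first step is to reduce to a single finite level. Since $X$ and $\alpha(X)$ are finite-dimensional, there is some $n$ with $X \cup \alpha(X) \subseteq \W_n$. Then $\alpha$ is an embedding of $X \subseteq \W_n$ into $\W_n$. Lemma~\ref{lem:symplectic-witt-extension} gives an automorphism $\beta$ of $\W_n$ extending $\alpha$.

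The second step is to extend $\beta$ to the whole space. Write $\bigcup_m \W_m$ with its standard symplectic basis $e_1, f_1, e_2, f_2, \dots$, and observe that $\W_n$ is spanned by $e_1,\dots,e_n,f_1,\dots,f_n$.
\begin{itemize}
\item The images $\beta(e_i), \beta(f_i)$ for $i \leq n$ form a symplectic basis of $\W_n$, since $\beta$ is an isometric bijection.
\item Take $\gamma$ to be $\beta$ on $\W_n$ and the identity on $e_m, f_m$ for $m>n$, extended linearly.
\item The vectors $e_m, f_m$ with $m > n$ are $\omega$-orthogonal to all of $\W_n$ by \eqref{eq:symplectic-basis}. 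So the family $\beta(e_1),\dots,\beta(f_n), e_{n+1}, f_{n+1},\dots$ again satisfies \eqref{eq:symplectic-basis} and spans the whole space.
\item Hence $\gamma$ maps a symplectic basis to a symplectic basis. It is therefore an isometric linear bijection, i.e.\ an automorphism extending $\alpha$.
\end{itemize}

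If one prefers not to use the explicit basis, there is an alternative for the second step. Extend the symplectic basis $\beta(e_i),\beta(f_i)$ of $\W_n$ to a symplectic basis of the countable space via Lemma~\ref{lem:symplectic-basis}, and do the same for $e_i, f_i$. Then match the two bases index by index. The resulting map is bijective because both bases span the whole space.

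The only point needing care is the observation that automorphisms of $\W_n$ extend at all. This is exactly where the orthogonal decomposition $\bigcup_m \W_m = \W_n \oplus \W_n^{\perp}$ supplied by the standard basis is used, so I expect no real obstacle.
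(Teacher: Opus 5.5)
Your proof is correct and is essentially the paper's argument. You choose $n$ with $X \cup \alpha(X) \subseteq \W_n$, extend $\alpha$ to some $\beta \in \Aut(\W_n)$ by Lemma~\ref{lem:symplectic-witt-extension}, and extend $\beta$ by the identity on $e_m, f_m$ for $m > n$; your check that this sends a symplectic basis to a symplectic basis, which the paper leaves implicit, is a welcome addition, but despite your opening sentence no back-and-forth is involved, since the argument is a direct one-step extension.
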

\begin{proof}
    Let $\alpha$ be an embedding of a finite subset $X \subseteq \bigcup_n \W_n$ into $\bigcup_n \W_n$.
    Then $\alpha$ is an embedding of $X \subseteq \W_{n'}$ into $\W_{n'}$ for some large enough $n'$,
    so by Lemma~\ref{lem:symplectic-witt-extension} some $\beta \in \Aut(\W_{n'})$ restricts to $\alpha$.
    But we can easily extend $\beta$ to an automorphism of $\bigcup_n \W_n$ by mapping $e_{n'+i} \mapsto e_{n'+i}, f_{n'+i} \mapsto f_{n'+i}$ for all $i \geq 1$.
\end{proof}

To prove that $\bigcup_n \W_n$ is smoothly approximated by the family of its finite-dimensional symplectic subspaces, 
we still need to show that $\bigcup_n \W_n$ is oligomorphic.
Whilst this follows from $\omega$-categoricity, we will use homogeneity to find an exact orbit count:

\begin{lemma}\label{lem:symplectic-oligomorphic}
    Write $q = \left\vert\ff\right\vert$.
    In the countable $\ff$-vector space and the countable symplectic $\ff$-vector space respectively, 
    there are
    \(
        \sum_{m = 0}^d \begin{bsmallmatrix}d \\ m \end{bsmallmatrix}_q
    \) 
    and
    \(
        \sum_{m = 0}^d \begin{bsmallmatrix}d \\ m \end{bsmallmatrix}_q \cdot q^{\binom{m}{2}}
    \) orbits of $d$-tuples.
    Here we use the Gaussian q-binomial coefficient \[
        \begin{bmatrix}d \\ m \end{bmatrix}_q 
        = \frac{
            (1 + q + \cdots + q^{d-1}) \cdot (1 + q + \cdots + q^{d-2}) \cdots (1 + q + \cdots + q^{d-m})
        }{
            (1 + q + \cdots + q^{m-1}) \cdot (1 + q + \cdots + q^{m-2}) \cdots (1 + q) \cdot 1
        }.
    \] 
\end{lemma}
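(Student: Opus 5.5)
By homogeneity of both structures (for the symplectic space this is Corollary~\ref{cor:W_infty-is-homogeneous}), two $d$-tuples lie in the same orbit if and only if the map sending one to the other is an embedding. In the plain vector space this means the two tuples satisfy the same linear relations. In the symplectic space they must also have the same values $\omega(x_i,x_j)$. So in each case the task reduces to counting the possible \emph{types} of $d$-tuples $(x_1,\dots,x_d)$, and then checking that every type is realised in the countable structure. Realisability is clear once we have the description of types below: in the symplectic case any alternating form on $\ff^m$ is realised inside some $\W_n$, for instance by placing the $m$ vectors into a large enough symplectic subbasis and adjusting.

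For the plain vector space, a linear type is determined by the kernel of the map $\ff^d \to \A$, $\lambda \mapsto \sum_i \lambda_i x_i$. Equivalently it is determined by this kernel's annihilator, the row space of the tuple. This row space can be any subspace of $\ff^d$. Let $m$ denote the rank, i.e.\ the dimension of the span. Then the number of types of rank $m$ is the number of $m$-dimensional subspaces of $\ff^d$, namely $\begin{bsmallmatrix}d\\m\end{bsmallmatrix}_q$. Summing over $m$ gives the first formula.

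For the symplectic space, fix the linear type of rank $m$. Then $\operatorname{span}(x_1,\dots,x_d)$ is identified with a fixed $m$-dimensional space $U$ spanned by the images of the $x_i$, and the remaining data is the restriction of $\omega$ to $U$. This restriction is an arbitrary alternating bilinear form on $U \cong \ff^m$. Such a form is determined by the values $\omega(u_k,u_l)$ for $k<l$ on a basis of $U$, so there are $q^{\binom m2}$ choices. The form need not be non-degenerate, since $U$ may contain radical vectors.

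The main step is realisability: every alternating form $\beta$ on $\ff^m$ arises as $\omega|_U$ for some $m$-dimensional $U$ inside $\bigcup_n\W_n$. To prove this, I would decompose $\ff^m$ as a non-degenerate part plus the radical of $\beta$, as in the proof of Lemma~\ref{lem:symplectic-witt-extension}. I would then choose a symplectic basis $e_1,\dots,e_r,f_1,\dots,f_r$ of the non-degenerate part and send the radical basis vectors to fresh vectors $e_{r+1},\dots,e_{r+k}$ in $\W_{r+k}$. This gives an isometric linear injection, so $\beta$ is realised. Multiplying the two counts and summing over $m$ yields $\sum_{m}\begin{bsmallmatrix}d\\m\end{bsmallmatrix}_q q^{\binom m2}$.
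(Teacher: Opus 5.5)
Your proof is correct. It follows the same overall strategy as the paper: homogeneity reduces orbits to realisable quantifier-free types, which are then counted. The bookkeeping, however, is genuinely different.

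The paper attaches to each tuple explicit normal-form data:
\begin{itemize}
\item the pivot set $I$ of positions not spanned by earlier entries;
\item the coefficients expressing each non-pivot entry in terms of earlier pivots;
\item the values of $\omega$ on pairs of pivots.
\end{itemize}
It realises any such data by the single formula $v_i = \sum_{i'<i}\mu(\{i>i'\})\,e_{i'} + f_i$. It then counts the linear data with $|I|=m$ by induction on $d$, using the $q$-Pascal recurrence.

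You instead describe a type invariantly, as a subspace $K$ of $\ff^d$ together with an alternating form on $\ff^d/K$. You quote the standard count of $m$-dimensional subspaces of $\ff^d$ with $|\ff|=q$; the paper's pivot data is just a reduced echelon form, so its induction is in effect a self-contained proof of that count. You realise an arbitrary alternating form by splitting off its radical, applying Lemma~\ref{lem:symplectic-basis} to a non-degenerate complement, and sending the radical to unused vectors $e_{r+1},\dots,e_{r+k}$; this reuses the decomposition from the proof of Lemma~\ref{lem:symplectic-witt-extension}.

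Your route makes it transparent why the symplectic count factors as the number of linear types times $q^{\binom m2}$: it is the number of strictly upper-triangular matrices on $\ff^m$. The paper's route needs no external counting fact, and its explicit realisation avoids the radical decomposition entirely.
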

\begin{proof}
    To each $d$-tuple $v_\bullet \in (\bigcup_n \W_n)^d$ we associate the following data:
    \begin{bracketenumerate}
        \item 
        pivot indices $I \subseteq \{1, \ldots, d\}$,
        where $i \in I$ if and only if $v_i$ is not spanned by the vectors $v_1, \ldots, v_{i-1}$ that come before;

        \item 
        an assignment $\lambda_j : \{i \in I \mid i < j\} \to \ff$, for each $j \not\in I$,
        such that $v_j = \sum_{i \in I, i < j} \lambda_j(i) \cdot v_i$;

        \item 
        and a map $\mu : \binom{I}{2} \to \ff$, where $\mu(\{i > i'\}) = \omega(v_{i}, v_{i'})$ for pivot indices $i, i' \in I$.
    \end{bracketenumerate}
    Then two $d$-tuples are:
    \begin{itemize}
        \item 
        in the same orbit under linear bijections $\bigcup_n \W_n \to \bigcup_n \W_n$ if and only if they have the same data~(1) and~(2), 
        since $\bigcup_n \W_n$ is homogeneous as a vector space (see Example~\ref{ex:vector-space-atoms}); and

        \item
        in the same orbit under isometric linear bijections $\bigcup_n \W_n \to \bigcup_n \W_n$ if and only if they have the same data~(1),~(2), and~(3),
        since $\bigcup_n \W_n$ is moreover homogeneous as a symplectic vector space by Corollary~\ref{cor:W_infty-is-homogeneous}.
    \end{itemize} 
    Also, given any $(I, \{\lambda_j\}_{j \not\in I}, \mu)$, we can find some $v_\bullet \in (\bigcup_n \W_n)^d$ with these data~--~put:
    \[
        v_i = \sum_{i' \in I, i' < i} \mu(\{i > i'\}) \cdot e_{i'} + f_i\text{ for $i \in I$,}\quad
        \text{and }v_j = \sum_{i \in I, i < j} \lambda_j(i) \cdot v_i \text{ for $j \not\in I$}.
    \]
    So it remains to check that, for $m \leq d$, there are $\begin{bsmallmatrix}d \\ m \end{bsmallmatrix}_{\left\vert\ff\right\vert}$ choices for $(I, \{\lambda_j\}_{j \not\in I})$ with $I$ being an $m$-element subset of $\{1, \dots, d\}$.

    We proceed by induction on $d$, noting that the counts match if $m = 0$:
    we have $\begin{bsmallmatrix}d \\ 0\end{bsmallmatrix}_{\left\vert\ff\right\vert} = 1$.
    Now if $I$ is an $(m+1)$-element subset of $\{1, \dots, d, d + 1\}$, 
    either $d+1 \in I$ or $I \subseteq \{1, \dots, d\}$.
    \begin{itemize}
        \item 
        In the first case, $I' = I \setminus \{d+1\}$ is an $m$-element subset of $\{1, \dots, d\}$, 
        and by the inductive hypothesis we have $\begin{bsmallmatrix}d \\ m\end{bsmallmatrix}_{\left\vert\ff\right\vert}$ choices for the $\lambda_j$'s with $j \in \{1, \dots, d\} \setminus I' = \{1, \dots, d, d+1\} \setminus I$.

        \item 
        In the second case, by the inductive hypothesis we have $\begin{bsmallmatrix}d \\ m+1\end{bsmallmatrix}_{\left\vert\ff\right\vert}$ choices for the $\lambda_j$'s with $j \in \{1, \dots, d\} \setminus I$, 
        and we have $\left\vert \ff^{m+1} \right\vert$ choices for $\lambda_{d+1}$~--~these can be made independently.
    \end{itemize}
    Altogether, there are
    \[
        \begin{bmatrix}d \\ m\end{bmatrix}_{\left\vert \ff \right\vert} + \begin{bmatrix}d \\ m+1\end{bmatrix}_{\left\vert \ff \right\vert} \cdot \left\vert \ff \right\vert^{m+1}
        = \begin{bmatrix}d+1 \\ m+1\end{bmatrix}_{\left\vert \ff \right\vert}
    \]
    choices for $(I, \{\lambda_j\}_{j \in \{1, \dots, d+1\} \setminus I})$.
\end{proof}

In light of the results above, we conclude:
\begin{corollary}
    For any finite field $\ff$, the countable symplectic $\ff$-vector space $\bigcup_n \W_n$ is smoothly approximated by all its finite-dimensional symplectic subspaces.
    By Theorems~\ref{thm:have-oligomorphic-approximation}(\ref{item:smooth-implies-oligomorphic-approximation}) and~\ref{thm:weak-smooth-approximation-finite-length}, 
    it has the finite length property over any field of characteristic $0$ (unlike $\ff$ itself).
\end{corollary}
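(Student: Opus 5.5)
The plan is to verify the two conditions of smooth approximation from Section~\ref{sec:characteristic-zero}, with the family $\Bclass$ consisting of all finite-dimensional symplectic subspaces of $\bigcup_n \W_n$, viewed as substructures. First I fix the ambient structure $\bigcup_n \W_n$ as in Example~\ref{ex:canonical-symplectic-space}; by the remarks after Lemma~\ref{lem:symplectic-basis}, this is the unique countable symplectic $\ff$-vector space. By Corollary~\ref{cor:W_infty-is-homogeneous} it is homogeneous, and by Lemma~\ref{lem:symplectic-oligomorphic} it has finitely many orbits of $d$-tuples for every $d$, so it is oligomorphic. That settles the part of condition~(2') that concerns the limit structure.

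Next I check that $\Bclass$ exhausts $\bigcup_n \W_n$. Any finite subset $S$ lies in the span of finitely many basis vectors $e_i, f_i$, hence inside some $\W_n$. Each $\W_n$ is a finite-dimensional symplectic subspace, since the form restricted to it is still alternating and non-degenerate. So every finite substructure is a substructure of some member of $\Bclass$, which gives condition~(\ref{item:oligomorphic-approximation-embedding}).

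It remains to show that each $\B \in \Bclass$ is homogeneous. Let $\B$ be a finite-dimensional symplectic subspace. By Lemma~\ref{lem:symplectic-basis}, $\B$ has a symplectic basis, so it is isomorphic to some $\W_m$. Witt's Extension Theorem (Lemma~\ref{lem:symplectic-witt-extension}) then says that any embedding between subsets of $\W_m$ extends to an automorphism of $\W_m$. Transporting this along the isomorphism shows that $\B$ is homogeneous, which completes~(2').

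The one subtle point is whether the substructure induced on $\B$ matches the intended vocabulary, namely the vector relations together with $\omega(-,-)=\lambda$. It does: the relations are defined pointwise, and $\B$ is closed under the operations, so the induced relations are exactly those of $\B$ as a symplectic space. The second sentence of the corollary then follows directly. Theorem~\ref{thm:have-oligomorphic-approximation}(\ref{item:smooth-implies-oligomorphic-approximation}) turns smooth approximation into oligomorphic approximation, and Theorem~\ref{thm:weak-smooth-approximation-finite-length} gives the finite length property over any field of characteristic $0$.
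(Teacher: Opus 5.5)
Your proposal is correct and is essentially the paper's argument. The paper justifies the corollary only with ``in light of the results above'', and you assemble exactly those results: homogeneity and oligomorphicity of $\bigcup_n \W_n$ (Corollary~\ref{cor:W_infty-is-homogeneous} and Lemma~\ref{lem:symplectic-oligomorphic}), exhaustion of finite subsets by the $\W_n$, and homogeneity of each finite-dimensional symplectic subspace via Lemma~\ref{lem:symplectic-basis} and Witt's Extension Theorem (Lemma~\ref{lem:symplectic-witt-extension}), followed by Theorems~\ref{thm:have-oligomorphic-approximation}(\ref{item:smooth-implies-oligomorphic-approximation}) and~\ref{thm:weak-smooth-approximation-finite-length}.
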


\subsection{Symplectic graphs}
Let us now explain the connection with the Rado graph and finish the proof of Theorem~\ref{thm:have-oligomorphic-approximation}(\ref{item:rado-has-oligomorphic-approximation}).
Here, let $\ff = \ff_2$ be the finite field with two elements.
\begin{definition}
    For $n = 0, 1, 2, \ldots$, the \emph{symplectic graph} $\overline{\W_n}$ has vertices $\W_n$ and edges
    \[
        E(v_1, v_2) \;:\Longleftrightarrow\; \omega(v_1, v_2) = 1.
    \]
    This is indeed an undirected graph: as $\omega$ is alternating, we have $\omega(v_1, v_2) = -\omega(v_2, v_1) = \omega(v_2, v_1)$ over $\ff_2$.
\end{definition}

\begin{lemma}\label{lem:symplectic-vs-graph}
    $\Aut(\overline{\W_n}) = \Aut(\W_n)$.
\end{lemma}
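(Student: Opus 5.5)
The inclusion $\Aut(\W_n) \subseteq \Aut(\overline{\W_n})$ is immediate: an automorphism of $\W_n$ is an isometric linear bijection, so it preserves $\omega(v_1,v_2)=1$ and hence the edge relation. The real content is the converse. Since $\ff=\ff_2$, the edge relation $E$ recovers $\omega$ completely: $\omega(v_1,v_2)$ is $1$ exactly when there is an edge and $0$ otherwise. (When $v_1=v_2$ both sides are $0$, because $\omega$ is alternating and the graph is irreflexive.) So a graph automorphism $g$ is a bijection $\W_n\to\W_n$ satisfying $\omega(g(v),g(w))=\omega(v,w)$ for all $v,w$. It remains to show that such a $g$ is linear.

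I would prove linearity using non-degeneracy. Over $\ff_2$ the only scalars are $0$ and $1$, so linearity amounts to $g(0)=0$ and additivity.
\begin{itemize}
\item For $g(0)=0$: for every $x$ we have $\omega(g(0),g(x))=\omega(0,x)=0$. Since $g$ is surjective, $g(0)$ is $\omega$-orthogonal to every vector, so $g(0)=0$ by non-degeneracy.
\item For additivity: put $u=g(v+w)-g(v)-g(w)$. For any $y$, write $y=g(x)$ using surjectivity. Bilinearity of $\omega$ and the isometry property then give
\[
\omega(u,y)=\omega(v+w,x)-\omega(v,x)-\omega(w,x)=0 .
\]
Again by non-degeneracy $u=0$, so $g(v+w)=g(v)+g(w)$.
\end{itemize}
Hence $g$ is an isometric linear bijection of $\W_n$, that is, $g\in\Aut(\W_n)$.

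There is no serious obstacle here. The one point to watch is that we must use the two-element field, so that the graph sees the full value of $\omega$ and not merely whether it is zero. We also need surjectivity of $g$ so that the orthogonality holds against all of $\W_n$.
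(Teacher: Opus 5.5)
Your proof is correct and follows essentially the same route as the paper. The paper likewise notes that graph automorphisms are automatically isometric over $\ff_2$, then gets linearity by pairing $f(\sum_i \lambda_i v_i) - \sum_i \lambda_i f(v_i)$ against every $f(w)$, using surjectivity and non-degeneracy. Your split into $g(0)=0$ plus additivity is the same argument specialised to the two scalars of $\ff_2$.
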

\begin{proof}
    Clearly each isometric linear bijection $\W_n \to \W_n$ is a graph automorphism of $\overline{\W_n}$.
    Conversely, every graph automorphism $f \in \Aut(\overline{\W_n})$ is evidently isometric.
    To show that $f$ is linear, take $\lambda_1, \lambda_2 \in \ff$ and $v_1, v_2 \in \W_n$.
    We calculate:
    \begin{align*}
        &\omega\Bigl( f(\sum_i \lambda_i v_i) - \sum_i \lambda_i f(v_i) ,\: f(w) \Bigr) 
        = \omega\Bigl( f(\sum_i \lambda_i v_i), f(w)\Bigr) - \sum_i \lambda_i \omega\bigl(f(v_i), f(w) \bigr) \\
        ={}& \omega\Bigl( \sum_i \lambda_i v_i, w \Bigr) - \sum_i \lambda_i \omega( v_i, w ) 
        = \omega(0, w) = 0
    \end{align*}
    for all $f(w) \in f(\W_n) = \W_n$;
    since $\omega$ is non-degenerate, 
    we conclude that $f(\sum_i \lambda_i v_i) = \sum_i \lambda_i f(v_i)$.
\end{proof}

So the number of orbits in $\overline{\W_n}^d$ is equal to the number of orbits in $(\W_n)^d$,
which by Lemma~\ref{lem:symplectic-witt-extension} and Corollary~\ref{cor:W_infty-is-homogeneous} is bounded from above by the number of orbits in $(\bigcup_n \W_n)^d$.
(We have equality if $n \geq d$.)
This last depends on $d$ and not on $n$.

\begin{remark}
    Let $\left\{\begin{smallmatrix}d \\ m\end{smallmatrix}\right\}$ count the number of equivalence relations with $m$ classes on a $d$-element set.
    We have the following orbit counts of $d$-tuples:\footnote{These are the $d$-th terms in the OEIS sequences \href{https://oeis.org/A335390}{A335390}, \href{https://oeis.org/A028361}{A028361}, and \href{https://oeis.org/A006116}{A006116} respectively.}
    \[
        \myunderbrace{\sum_{m=0}^d \left\{\begin{matrix}d \\ m \end{matrix}\right\} \cdot 2^{\binom{m}{2}}}{Rado graph}
        \quad\leq\quad \myunderbrace{\sum_{m = 0}^d \begin{bmatrix}d \\ m \end{bmatrix}_2 \cdot 2^{\binom{m}{2}}}{$\bigcup_n \W_n$}
        \quad\geq \myunderbrace{\sum_{m = 0}^d \begin{bmatrix}d \\ m \end{bmatrix}_2.}{$\ff_2$-vector space}
    \]
\end{remark}

So we only need to show that any finite graph $G$ can be embedded into some $\overline{\W_{n(G)}}$:

\begin{lemma}[{\cite[Thm.~8.11.2]{GR01}}]
    Every graph on at most $2n$ vertices embeds in $\overline{\W_n}$.
\end{lemma}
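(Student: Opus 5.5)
The plan is to embed the graph via symplectic vectors, by induction on the number of vertices. I use the characterisation that $\overline{\W_n}$ has an edge between $v$ and $w$ exactly when $\omega(v,w)=1$ over $\ff_2$. So an embedding of a graph $G$ on vertices $x_1,\dots,x_m$ with $m \leq 2n$ amounts to \emph{distinct} vectors $v_1,\dots,v_m \in \W_n$ with $\omega(v_i,v_j)=1$ iff $x_i x_j$ is an edge, for $i \neq j$. The diagonal is automatic because $\omega$ is alternating and $G$ is irreflexive.

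I build the $v_i$ one at a time, keeping the following invariant: after placing $v_1,\dots,v_k$, these vectors are linearly independent in $\W_n$, which is possible since $k \leq 2n = \dim \W_n$. To place $v_{k+1}$, I need a vector $v$ with prescribed values $\omega(v_i,v)=\epsilon_i \in \ff_2$ for $i \leq k$, and with $v$ not in the span of $v_1,\dots,v_k$.

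Since $\omega$ is non-degenerate, the map $v \mapsto (\omega(v_i,v))_{i\le k}$ from $\W_n$ to $\ff_2^k$ is surjective whenever the $v_i$ are linearly independent; this is dual to the independence of the functionals $\omega(v_i,-)$. Its solution set is therefore an affine subspace of dimension $2n-k$. It remains to avoid the span $U=\langle v_1,\dots,v_k\rangle$. If every solution lay in $U$, then the solution set, a coset of $U^\perp$ of dimension $2n-k$, would sit inside $U$. Hence $U^\perp \subseteq U$ and $2n-k \leq k$. So for $k<n$ there is nothing to check, but near $k=2n$ this naive invariant can fail.

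This is the main obstacle, and I would resolve it by dropping linear independence and requiring only distinctness. Then I only need a solution $v$ different from $v_1,\dots,v_k$. The solution set is a coset of $U^\perp$, which has size $2^{2n-\dim U} \geq 2^{2n-k}$. For $k \leq 2n-1$ this gives at least $2$ solutions, and more whenever $k<2n-1$.

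Surjectivity, though, needs independence, so instead I would fall back on the standard construction from~\cite[Thm.~8.11.2]{GR01}. There, vertex $x_i$ is sent to $e_i$ or to $f_i$-type combinations, for instance taking $v_i = f_i + \sum_{j<i,\, x_jx_i \in E} e_j$ for $i \leq n$, as in the proof of Lemma~\ref{lem:symplectic-oligomorphic}, and then handling the remaining $n$ vertices through the $e$-coordinates. Concretely, I would take $v_i = f_i + \sum_{j<i} \mu_{ij} e_j$, which realises any graph on $n$ vertices with independent vectors. For up to $2n$ vertices I would set up the unknown vectors triangularly in the basis $e_1,f_1,\dots,e_n,f_n$, so that each new vertex uses one fresh basis vector whose pairing partner is still free. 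Checking that the Gram matrix can be prescribed arbitrarily this way is the step I expect to require the most care.
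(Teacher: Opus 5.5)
\textbf{The case $n<m\le 2n$ is not proved, and the scheme you sketch for it cannot work.} Your reformulation and your greedy analysis are correct, but everything you actually establish stops at $n$ vertices. The independent greedy step is only guaranteed while $k<n$, and the family $v_i=f_i+\sum_{j<i}\mu_{ij}e_j$ uses the pair $e_i,f_i$ for the $i$-th vertex. The range $n<m\le 2n$ is the real content of the statement, and you leave it to your last paragraph.

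If, as you describe, each new vertex uses a fresh basis vector, then $v_1,\dots,v_m$ are linearly independent. Let $U$ be their span. The Gram matrix, which must equal the adjacency matrix over $\ff_2$, then has rank $m-\dim(U\cap U^\perp)\ge 2m-2n$, because $\dim U^\perp=2n-m$. The edgeless graph on $n+1$ vertices already violates this: it would need an $(n+1)$-dimensional totally isotropic subspace of $\W_n$. So linear dependencies among the $v_i$ are unavoidable. As you observed when surjectivity broke down, the prescribed adjacencies must then be compatible with those dependencies while the vectors stay distinct. Your proposal has no mechanism for this.

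The paper tries to supply one by induction on $n$. It sends the ends of an edge $st$ to $e_n$ and $f_n$. It embeds the remaining at most $2n-2$ vertices into $\overline{\W_{n-1}}$ for a suitably modified edge relation $E_{s,t}$, then shifts each image by $\lceil E(x,t)\rfloor e_n-\lceil E(x,s)\rfloor f_n$. Edgeless graphs are handled inside the $2^n\ge 2n$ vectors of $\langle e_1,\dots,e_n\rangle$.

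\textbf{No argument can complete that step, because the statement as written is false for small $n$.} For instance, $K_{1,3}$ does not embed in $\overline{\W_2}$. Its three leaves are distinct, pairwise orthogonal, and nonzero, since each pairs to $1$ with the centre $c$. So they span a totally isotropic subspace of $\W_2$, of dimension at most $2$. Hence they are the three nonzero vectors $a$, $b$, $a+b$ of a plane, and $\omega(c,a+b)=\omega(c,a)+\omega(c,b)=0$, a contradiction. Similarly, $K_{1,5}$ does not embed in $\overline{\W_3}$.

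Correspondingly, the paper's inductive step breaks down at injectivity. A vertex $x$ that is isolated in $G_{s,t}$ may be sent to $0$. Its image is then $e_n=f'(s)$ or $f_n=f'(t)$ whenever $x$ is adjacent to exactly one of $s,t$. For $K_{1,3}$ this is forced: $G_{s,t}$ is an edgeless pair, and $\overline{\W_1}$ has no two non-adjacent nonzero vertices.

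The surrounding argument only needs every finite graph to embed into \emph{some} symplectic graph. Your independent construction already gives this: every graph on $m$ vertices embeds into $\overline{\W_m}$.
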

\begin{proof}[Proof (without jargon)]

    Let $G$ be a graph on at most $2n$ vertices. 
    Note that if $G$ contains no edges, we can choose any $2n$ of the $2^n$ vectors in 
    the span of $e_1, \ldots, e_n$ in $\overline{\W_n}$.
    
    So suppose that $n \geq 1$ and $G$ has an edge $E(s, t)$.
    Let $G_{s,t}$ be the graph with vertices $G \setminus \{s, t\}$ and with edges $E_{s,t}$ that we will specify later.
    By induction, some embedding $f : G_{s, t} \to \overline{\W_{n-1}}$ exists.
    Define $f' : G \to \overline{\W_n}$ by
    \begin{align*}
        x \in G_{s, t} \mapsto f(x) - \lceil E(x, s) \rfloor f_n + \lceil E(x, t) \rfloor  e_n,\quad 
        s \mapsto e_n,\quad 
        t \mapsto f_n
    \end{align*}
    where $\lceil \Phi \rfloor $ is $1$ if $\Phi$ holds, and $0$ otherwise.
    On one hand, we have
    \begin{align*}
        \omega(f'(x), f'(s)) = \lceil E(x, s) \rfloor, \quad
        \omega(f'(x), f'(t)) = \lceil E(x, t) \rfloor 
    \end{align*}
    as desired.
    On the other,
    \begin{align*}
        \omega( f'(x_1), f'(x_2) ) 
        = \lceil E_{s,t}(x_1, x_2) \rfloor  
        + \lceil E(x_1, s) \rfloor  \lceil E(x_2, t) \rfloor
        - \lceil E(x_1, t) \rfloor  \lceil E(x_2, s) \rfloor 
    \end{align*}
    tells us how we should define the edge relation $E_{s,t}$ in $G_{s,t}$ for $f'$ to be an embedding of graphs, 
    i.e.~so that $\omega( f'(x_1), f'(x_2) ) = \lceil E(x_1, x_2) \rfloor$.
\end{proof}

Note that $\bigcup_{n} \overline{\W_n}$ is not isomorphic to the Rado graph.
For instance, there are no common neighbours of $e_i$, $f_i$, and $e_i + f_i$:
to have $1 = \omega(v, e_i + f_i) = \omega(v, e_i) + \omega(v, f_i) = 1 + 1$ is impossible.
As both $\bigcup_{n} \overline{\W_n}$ and the Rado graph embed all finite graphs, the homogeneity of the latter means that $\bigcup_{n} \overline{\W_n}$ cannot be a homogeneous graph.
Also, the zero vector is an isolated vertex, so $\overline{\W_1}, \overline{\W_2}, \dots$ are not homogeneous either~--~so this family is not a smooth approximation, but only an oligomorphic approximation of the Rado graph:

\begin{corollary}
    The Rado graph is oligomorphically approximated by the finite symplectic graphs (embedded in all possible ways in the Rado graph).
    By Theorem~\ref{thm:weak-smooth-approximation-finite-length}, it has the finite length property over any field of characteristic $0$.
\end{corollary}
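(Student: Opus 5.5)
The plan is to assemble the two halves of the proof that are already in place: the verification that the family $\Bclass$ of finite symplectic graphs satisfies Definition~\ref{def:oligomorphic-approximation}, and Theorem~\ref{thm:weak-smooth-approximation-finite-length}.

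\textbf{Step 1: fix the family.} Let $\Bclass$ be the family of all finite substructures of the Rado graph that are isomorphic to some $\overline{\W_n}$. Since every finite graph embeds in the Rado graph, each $\overline{\W_n}$ occurs, in every possible position, as a substructure of the Rado graph.

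\textbf{Step 2: the embedding condition (\ref{item:oligomorphic-approximation-embedding}).} Let $S$ be a finite substructure of the Rado graph with $|S| \le 2n$. By the embedding lemma from \cite[Thm.~8.11.2]{GR01}, there is an embedding $g : S \to \overline{\W_n}$. We then need an embedding $h : \overline{\W_n} \to \A$ with $h \circ g$ equal to the inclusion of $S$. To build it, first embed $\overline{\W_n}$ into the Rado graph arbitrarily by some $h_0$. The map $h_0 \circ g$ is then an isomorphism between two finite substructures of the Rado graph. By homogeneity it extends to an automorphism $\pi$, and we set $h = \pi^{-1} \circ h_0$. The image of $h$ is a copy of $\overline{\W_n}$ that contains $S$ as a substructure, so it lies in $\Bclass$ and serves as $\B_S$.

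\textbf{Step 3: the orbit bound (\ref{item:oligomorphic-approximation-orbits}).} Each $\B \in \Bclass$ has automorphism group $\Aut(\overline{\W_n})$, and Lemma~\ref{lem:symplectic-vs-graph} identifies this with $\Aut(\W_n)$. So the number of orbits in $\B^d$ equals the number of orbits of $d$-tuples in $\W_n$. By Witt's Extension Theorem (Lemma~\ref{lem:symplectic-witt-extension}) together with Corollary~\ref{cor:W_infty-is-homogeneous}, two tuples lie in the same orbit exactly when they have the same data (1)--(3) from the proof of Lemma~\ref{lem:symplectic-oligomorphic}. The count is therefore at most $\sum_{m}\begin{bsmallmatrix}d\\m\end{bsmallmatrix}_2 2^{\binom m2}$, a bound that depends only on $d$ and not on $n$. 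This holds for every $d$, in particular for the value $2d$ used in the proof of the theorem.

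\textbf{Step 4: conclude.} The Rado graph is homogeneous, so Theorem~\ref{thm:weak-smooth-approximation-finite-length} applies and gives the finite length property over any field of characteristic $0$.

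The only delicate point is Step 2: condition~(\ref{item:oligomorphic-approximation-embedding}) needs a copy of the symplectic graph that contains the given $S$ itself, not merely an abstract copy of $S$. Homogeneity of the Rado graph is exactly what supplies this.
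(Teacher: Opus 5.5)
Your proposal is correct and follows the paper's argument. Condition~(\ref{item:oligomorphic-approximation-embedding}) comes from the embedding lemma \cite[Thm.~8.11.2]{GR01}; the uniform orbit bound comes from Lemma~\ref{lem:symplectic-vs-graph} together with Witt's Extension Theorem; and Theorem~\ref{thm:weak-smooth-approximation-finite-length} then gives the conclusion. Your Step~2 simply makes explicit, via homogeneity of the Rado graph, what the paper leaves implicit in the phrase ``embedded in all possible ways''.
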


A different finite field $\innerfield$ can encode different structures.
For example, the 3-element field allows us to analogously prove the finite length property of the universal homogeneous digraph over any field of characteristic zero.\footnote{We are again grateful to Ehud Hrushovski for pointing this out.}

%% file: src/appendix-function-spaces.tex
\section{Example~\ref{ex:no-function-spaces} continued}\label{sec:appendix-no-function-spaces}
In this part of the appendix, we prove that the space $V$ from Example~\ref{ex:no-function-spaces} is not orbit-finitely spanned.
Recall that $f_S$ is, given a finite subset $S$ of the Rado atoms $\A$, the characteristic function of the common neighbours of $S$ in $\A$;
the span of all such $f_S$ is $V$.

Suppose towards a contradiction that $V$ has an orbit-finite spanning set. 
Without loss of generality we can assume that this spanning set uses only vectors of the form $f_S$, 
i.e.~the spanning set is $\{ f_S \mid S \in \mathscr{S} \}$ for some orbit-finite family $\mathscr{S}$ of finite subsets of $\A$.
Take any finite set $T \subset \A$ that has strictly more elements than any set in $\mathscr{S}$ 
(which is possible because subsets of $\A$ in the same orbit, of the finitely many orbits, have the same number of elements). 
Then we can write
\[
    f_T = \lambda_1 \cdot f_{S_1} + \cdots + \lambda_n \cdot f_{S_n}
\]
for some distinct $S_i \in \mathscr{S}$.  
We will prove that all coefficients $\lambda_i$ are zero, and hence $f_T$ is the zero function. 
This cannot be true, since one can find an atom that is a common neighbour of all the atoms in $T$.

To prove that $\lambda_i = 0$, we use induction on $n$. 
In the induction proof we assume without loss of generality that the sets $S_1,\ldots,S_n$ are sorted by size in a non-decreasing way. 
Suppose that we have proved that all coefficients $\lambda_j$ are zero for $j < i$. 
By the assumption on non-decreasing sizes, each of the sets $S_{i+1},\ldots,S_{n}$ and $T$ contains at least one atom that is not in $S_i$. 
Therefore, we can find some $a \in \A$ that is adjacent to all atoms in $S_i$, 
but which is non-adjacent to at least one atom in each of the sets $S_{i+1},\ldots,S_{n}$ and $T$. 
As $\lambda_1, \dots, \lambda_{i-1} = 0$, we see that
\[
    0 = f_T(a) = \sum_{j \geq i} \lambda_j \cdot f_{S_j}(a) = \lambda_i \cdot 1 + \sum_{j > i} \lambda_j \cdot 0
\]
and hence $\lambda_i=0$.

%% file: src/appendix-woodrow-lachlan.tex
\section{Proof of Corollary~\ref{cor:lachlan-woodrow}}\label{sec:appendix-lachlan-woodrow}
Let us begin by listing a few straightforward applications of Theorem~\ref{thm:ordered-free-amalg-has-finite-length}:
the generically ordered expansion $\A$ of each $\A_0$ in Figure~\ref{fig:some-free-amalg-classes} has the finite length property over any field.
Looking at rows~1 and~4 there, by Corollary~\ref{cor:free-finite-length} we immediately obtain Corollary~\ref{cor:lachlan-woodrow}(i) and~(iv):
\begin{claim}\label{claim:ordered-and-equality-have-finite-length}
    In light of Example~\ref{ex:generically-ordered-equality}, the ordered atoms $(\Q, <)$ and the equality atoms $(\N, =) \cong (\Q, =)$ have the finite length property over any field.
\end{claim}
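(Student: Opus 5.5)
The claim follows by instantiating Theorem~\ref{thm:ordered-free-amalg-has-finite-length} with the empty vocabulary and then applying Corollary~\ref{cor:free-finite-length}. The plan is as follows.

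\emph{Setting up the instance.} Take $\sigma_0 = \emptyset$ and $\Cclass_0 = \Forb(\{\})$, the class of all finite pure sets, as in Example~\ref{ex:free-amalg-equality}. This vocabulary is finite and at most binary. The class is a free amalgamation class, since the family of forbidden structures is empty, so the condition that every two elements of each forbidden structure are related holds vacuously. Irreflexivity holds trivially because there are no binary relations. The class $\Cclass$ then consists of all finite total orders. Its Fraïssé limit $\A$ is the countable dense linear order without endpoints, which by Cantor's theorem is isomorphic to $(\Q,<)$; this is exactly the content of Example~\ref{ex:generically-ordered-equality}. Theorem~\ref{thm:ordered-free-amalg-has-finite-length} therefore gives the finite length property of $(\Q,<)$ over any field.

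\emph{Passing to the reduct.} The reduct $\A_0$ is the Fraïssé limit of finite pure sets, namely a countably infinite set with equality only, i.e.\ $(\Q,=)$. It is isomorphic to the equality atoms $(\N,=)$ through any bijection $\N\to\Q$. Corollary~\ref{cor:free-finite-length} yields the finite length property for $\A_0$ over any field. The property transfers along the isomorphism, since an isomorphism of structures induces an isomorphism of automorphism groups compatible with the actions on $\Lin_\FF X$ for orbit-finite $X$.

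\emph{Expected obstacle.} Essentially nothing here is difficult. The only points needing care are the identification of the Fraïssé limit of finite linear orders with $(\Q,<)$, which follows from uniqueness of the Fraïssé limit together with the back-and-forth method, and the observation that the finite length property is invariant under isomorphism of atom structures.
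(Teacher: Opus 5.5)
Your proposal is correct and follows the paper's route exactly. You instantiate Theorem~\ref{thm:ordered-free-amalg-has-finite-length} with $\sigma_0=\emptyset$ and $\Cclass_0=\Forb(\{\})$ (row~1 of Figure~\ref{fig:some-free-amalg-classes}) to get the ordered atoms, and then obtain the equality atoms as the reduct via Corollary~\ref{cor:free-finite-length}. The extra checks you spell out (vacuous freeness and irreflexivity, the identification of the limit with $(\Q,<)$, invariance under isomorphism) are the ones the paper leaves implicit in Example~\ref{ex:generically-ordered-equality}.
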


\begin{claim}
    Given any set $\mathscr{T}$ of finite tournaments, Henson's $\mathscr{T}$-free digraph has the finite length property over any field.
\end{claim}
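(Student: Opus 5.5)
The plan is to realise Henson's $\mathscr{T}$-free digraph as the reduct $\A_0$ of the generically ordered expansion of a suitable free amalgamation class, and then apply Corollary~\ref{cor:free-finite-length}. Take $\sigma_0=\{E\}$ with $E$ binary. Let $\Fclass$ consist of the loop $A$, the $2$-cycle $K_2$ from Example~\ref{ex:free-amalg-graphs}, and every tournament in $\mathscr{T}$. Put $\Cclass_0=\Forb(\Fclass)$.

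The first step is to check that $\Cclass_0$ satisfies the hypotheses of Theorem~\ref{thm:ordered-free-amalg-has-finite-length}. The vocabulary is finite and at most binary. Any two distinct elements of a member of $\Fclass$ are related: $A$ has only one element, $K_2$ has an edge both ways, and a tournament is total. So by the characterisation quoted before Example~\ref{ex:free-amalg-graphs}, $\Forb(\Fclass)$ is a free amalgamation class. It is irreflexive because the loop $A$ is forbidden. Note that $\mathscr{T}$ may be infinite, even uncountable as a choice of family. This causes no problem, since the theorem only requires the vocabulary to be finite, not $\Fclass$.

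Next, I would verify that $\Cclass_0$ is exactly the class of finite digraphs omitting every tournament in $\mathscr{T}$. Forbidding $A$ and $K_2$ gives irreflexivity and antisymmetry, and forbidding $\mathscr{T}$ as substructures is precisely the $\mathscr{T}$-free condition. A tournament embeds as a substructure if and only if it embeds as an induced subdigraph, so the two notions of omission agree. We may also assume each $T\in\mathscr{T}$ is a minimal non-member of $\Cclass_0$, discarding any $T$ that contains another element of $\mathscr{T}$; this does not change $\Forb$. The Fraïssé limit $\A_0$ of $\Cclass_0$ is then, by definition, Henson's $\mathscr{T}$-free digraph.

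Finally, Theorem~\ref{thm:ordered-free-amalg-has-finite-length} gives the finite length property for the generically ordered expansion $\A$. Corollary~\ref{cor:free-finite-length} then transfers it to the reduct $\A_0$, over any field. The only step needing care is the freeness and minimality bookkeeping in the first two paragraphs, and that is routine. Varying $\mathscr{T}$ over antichains of tournaments yields Henson's uncountably many examples, as stated in Corollary~\ref{cor:lachlan-woodrow}(iv).
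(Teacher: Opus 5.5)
Your proof is correct and is essentially the paper's own argument. The paper obtains this claim by realising Henson's $\mathscr{T}$-free digraph as the Fraïssé limit of the free amalgamation class $\Forb(\{A, K_2\}\cup\mathscr{T})$ over $\sigma_0=\{E\}$ (row~4 of Figure~\ref{fig:some-free-amalg-classes}), and then applying Theorem~\ref{thm:ordered-free-amalg-has-finite-length} through Corollary~\ref{cor:free-finite-length}. You also check explicitly that every forbidden structure has all pairs related and that forbidding the $2$-cycle $K_2$ (as in Example~\ref{ex:free-amalg-graphs}) enforces antisymmetry, which the paper leaves implicit; your minimality reduction is harmless but unnecessary.
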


\begin{figure}
\begin{tabular}{llll}
   & $\sigma_0$ & $\Cclass_0$ & $\A_0$ \\
    \toprule
    1.& $\{\ \}$ & $\Forb(\{\})$ & $(\N, =) \:\cong\: (\Q, =)$ \\
    2.& $\{\ E(-,-)\ \}$ & $\Forb(\{\bigdot \rotatebox{90}{$\circlearrowleft$}, {\bigdot} {\rightleftarrows} {\bigdot}\})$ & Rado graph \\
    3.& $\{\ E(-,-)\ \}$ & $\Forb(\{\bigdot \rotatebox{90}{$\circlearrowleft$}, {\bigdot} {\rightleftarrows} {\bigdot}, K_n\})$, & Henson's $K_n$-free graph \\
    & & \quad for any $n \geq 3$ & \\
    4.& $\{\ E(-,-)\ \}$ & $\Forb(\{\bigdot \rotatebox{90}{$\circlearrowleft$}, {\bigdot} {\rightarrow} {\bigdot}\} \cup \mathscr{T})$, & Henson's $\mathscr{T}$-free digraph \\
    & & \quad for any set $\mathscr{T}$ of finite tournaments & \\
    5.& $\{\ P(-)\ \}$ & $\Forb(\{\})$ & $(\N, \text{even numbers})$ \\
    & & & $\cong (\{e^{i r} \mid r \in \Q\}, \text{right half-circle})$ \\
    \bottomrule
\end{tabular}
\caption{Some Fraïssé limits of free amalgamation classes over at-most-binary vocabularies}\label{fig:some-free-amalg-classes}
\end{figure}

To deduce the finite length property of more structures, we now present a more general version of Corollary~\ref{cor:free-finite-length}.
For that, we need the notion of first-order interpretation.
\begin{definition}
    Let $\A$ be an oligomorphic $\sigma$-structure.
    A $\sigma'$-structure $\B$, where $\sigma'$ is a relational vocabulary possibly different from $\sigma$,
    is \emph{first-order interpretable} in $\A$ if there exist:
    \begin{bracketenumerate}
        \item an orbit-finite set $X$ over $\A$, and
        \item\label{item:fo-interpretation-definable-relations} for each $R \in \sigma'$, with $R$ being $d$-ary, an $\Aut(\A)$-equivariant subset $X_R \subseteq X^{d}$
    \end{bracketenumerate}
    such that $\B$ and $(X, \{X_R\}_{R \in \sigma'})$ are isomorphic as $\sigma'$-structures.
\end{definition}

In particular, note that any first-order reduct $\B$ of $\A$ is first-order interpretable in $\A$ by using $\A$ itself as the underlying orbit-finite set.
In that case, as $\A$ is oligomorphic, we may replace ``$\Aut(\A)$-equivariant'' in (\ref{item:fo-interpretation-definable-relations}) with ``$\sigma$-definable''.

\begin{lemma}\label{lem:interpretation-preserves-finite-length}
    If an oligomorphic structure $\A$ has the finite length property over a given field $\field$,
    then so does any structure $\B$ that is first-order interpretable in $\A$.
\end{lemma}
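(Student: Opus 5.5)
The plan is to transport every orbit-finite set over $\B$, together with the $\Aut(\B)$-action, into an orbit-finite set over $\A$, so that chains of $\Aut(\B)$-equivariant subspaces become chains of $\Aut(\A)$-equivariant subspaces. Fix an interpretation, that is, an orbit-finite $X$ over $\A$ with $\B \cong (X,\{X_R\})$, and identify $\B$ with $X$.

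\textbf{The homomorphism.} Every $\pi \in \Aut(\A)$ acts on $X$. Because each $X_R$ is $\Aut(\A)$-equivariant, this action preserves every relation of $\B$. It therefore yields a group homomorphism $\rho : \Aut(\A) \to \Aut(\B)$.

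\textbf{Reduction to powers.} By the remark after Definition~\ref{def:finite-length-property}, which invokes Theorem~\ref{thm:ACP}, it suffices to bound the lengths of chains in $\Lin_\FF \B^d$ for each $d$. We must be careful here, because $\B$ need not be oligomorphic a priori. However, $X^d$ is orbit-finite over $\A$, hence has finitely many $\Aut(\A)$-orbits. Each $\Aut(\B)$-orbit on $\B^d$ is a union of $\Aut(\A)$-orbits via $\rho$, so $\B$ is oligomorphic too. Alternatively, we can work directly with chains in $\Lin_\FF Y$ for an orbit-finite $Y$ over $\B$; the power case already suffices given the remark.

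\textbf{Transferring chains.} Take a chain $V_0 \subset \cdots \subset V_n$ of $\Aut(\B)$-equivariant subspaces of $\Lin_\FF \B^d = \Lin_\FF X^d$. Each $V_i$ is closed under $\rho(\pi)$ for all $\pi\in\Aut(\A)$, and the linear action of $\pi$ on $\Lin_\FF X^d$ is exactly that of $\rho(\pi)$. Hence each $V_i$ is $\Aut(\A)$-equivariant. Since $X^d$ is an orbit-finite set over $\A$ (orbit-finite sets are closed under finite products, by Definition~\ref{def:orbit-finite-set-alternative}), the finite length property of $\A$ gives a bound $\ell$ on such chains, depending only on $X^d$. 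Thus $n \le \ell$, and $\B$ has the finite length property over $\FF$.

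\textbf{Main obstacle.} The only delicate step is matching the definitions. Definition~\ref{def:finite-length-property} quantifies over orbit-finite sets over $\B$, where equivariance refers to $\Aut(\B)$ and finite supports are taken in $\B$. We handle this by reducing to $\B^d$, which sidesteps supports entirely, using the remark's equivalence via Theorem~\ref{thm:ACP} applied to $\B$. If one prefers to avoid relying on that equivalence for $\B$, the fallback is to treat a general orbit-finite $Y=Z/{\sim}$ with $Z\subseteq\B^d$ equivariant. Then $Z\subseteq X^d$ is $\Aut(\A)$-equivariant and $\sim$ is an $\Aut(\A)$-equivariant equivalence, so $Y$ is itself orbit-finite over $\A$ and the same argument applies verbatim.
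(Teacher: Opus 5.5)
Your proposal is correct and follows essentially the same route as the paper. Both arguments observe that $\Aut(\A)$ acts on $\B$ by automorphisms, so each $\Aut(\B)$-orbit in $\B^d$ is a union of the finitely many $\Aut(\A)$-orbits, which makes $\B$ oligomorphic; then every $\Aut(\B)$-equivariant subspace of $\Lin_\FF \B^d$ is $\Aut(\A)$-equivariant, and the finite length property of $\A$, applied to the orbit-finite set $\B^d$ over $\A$, bounds the chains. Your fallback that treats a general orbit-finite $Y = Z/{\sim}$ over $\B$ directly is a harmless extra; the paper leaves this step implicit in the remark after Definition~\ref{def:finite-length-property}.
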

\begin{proof}
    Identify $\B$ with an orbit-finite set over $\A$ equipped with $\Aut(\A)$-equivariant relations.
    Then, given $\pi \in \Aut(\A)$, the induced action $b \mapsto \pi(b)$ on $\B$ is an automorphism of $\B$.
    So an $\Aut(\B)$-orbit of tuples in any $\B^d$ is also equivariant under $\Aut(\A)$, i.e.~it is a union of $\Aut(\A)$-orbits.
    Since $\B^d$ is orbit-finite over $\A$, it can only have finitely many $\Aut(\B)$-orbits.
    In short, $\B$ is oligomorphic because $\A$ is so.

    Similarly, consider a chain of $\Aut(\B)$-equivariant subspaces in $\Lin_\FF \B^d$.
    These spaces are necessarily $\Aut(\A)$-equivariant, 
    so the chain must have bounded length by the finite length property of $\A$~--~since $\B^d$ is an orbit-finite set over $\A$.
\end{proof}

In addition to the above lemma, to prove Corollary~\ref{cor:lachlan-woodrow}(ii) and (iii) we need to rely on two classification results due to Lachlan and Woodrow. 
These have intricate proofs but are simple to state:

\begin{fact}[{\cite{LachlanWoodrow_80}}]
    Any countable homogeneous graph is, up to complementation, isomorphic to one of: 
    \begin{romanenumerate}
        \item the Rado graph;
        \item Henson's $K_n$-free graph, where $n \geq 3$;
        \item $m$ disjoint copies of the complete graph $K_n$, where at least one of $m$ and $n$ is infinite.
    \end{romanenumerate}    
\end{fact}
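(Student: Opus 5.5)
The plan is to classify by case analysis on primitivity, using homogeneity to turn structural facts into quantifier-free ones, and then to identify the age of the graph in the primitive case. Let $G$ be a countable homogeneous graph. In dimension $2$ there are only three orbits (equal, adjacent, distinct non-adjacent), so any $\Aut(G)$-invariant equivalence relation on $G$ is, besides the trivial ones, either ``equal or adjacent'' or ``equal or non-adjacent''. Since complementation preserves homogeneity, we may assume in the imprimitive case that ``equal or non-adjacent'' is an equivalence relation.

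\textbf{Imprimitive case.} If ``equal or non-adjacent'' is an equivalence relation, then $G$ is complete multipartite. Homogeneity forces all parts to have the same size, since the automorphism group acts transitively on the parts. Passing to the complement, this gives $m$ disjoint copies of $K_n$, and infinitude of $G$ forces $m$ or $n$ to be infinite. This yields case~(iii).

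\textbf{Primitive case.} Now assume $G$ is primitive, so both $G$ and its complement are connected of diameter $2$. For a vertex $x$, the neighbourhood $N(x)$ and the non-neighbourhood $\overline{N}(x)$ are again homogeneous graphs, because $\Aut(G/x)$ acts on them and finite partial isomorphisms inside them extend via homogeneity of $G$ fixing $x$. These are ``smaller'' in the sense of clique or anticlique number. Let $n$ be least with $K_n \not\hookrightarrow G$ (possibly $n=\infty$), and similarly $m$ for independent sets. The goal is to show that, if $n=\infty=m$, the age of $G$ is all finite graphs, giving the Rado graph; and that, if $n<\infty$, the age is exactly the $K_n$-free graphs (forcing $m=\infty$), giving Henson's graph, and dually for $m<\infty$. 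Since $G$ is determined by its age, this finishes the proof.

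\textbf{Main step.} The key step, and the main obstacle, is the embedding of all graphs in the candidate age. I would proceed by induction on the number of vertices of a finite graph $A$ in the candidate age, trying to realise a one-point extension $A\cup\{y\}$ over an embedded copy of $A$. I would apply the induction hypothesis to the homogeneous graphs $N(x)$ and $\overline{N}(x)$, whose parameters drop to $n-1$ or $m-1$, and combine these with primitivity to move between neighbourhoods.

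The genuinely hard part is excluding ``exotic'' primitive ages, such as the pentagon or the $3\times3$ rook's graph, which are finite exceptions. Ruling these out requires Ramsey's theorem. Given an infinite primitive $G$, I would extract large cliques or anticliques in $N(x)$ and use them to show that $N(x)$ is itself infinite and primitive. Induction then pins down its age as Rado or $K_{n-1}$-free.

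One further obstacle I expect is the final step. There I would need to verify, by a counting and amalgamation argument, that every $K_n$-free type over a finite set is realised. This is where I anticipate the bulk of the intricate case analysis.
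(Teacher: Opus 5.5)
The reduction to the primitive case is sound, apart from a small slip. $K_\infty$ and its complement are primitive as permutation groups but are not both connected and co-connected, so you must set them aside before claiming diameter $2$; they fall under (iii) with $m=1$ or $n=1$.

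The genuine gap is that the primitive case, which is the whole content of the Lachlan--Woodrow theorem, is only announced, and the mechanism you sketch for it does not work as stated. Ramsey's theorem does not let you ``extract large cliques or anticliques in $N(x)$'' and conclude that $N(x)$ is primitive. Infinitude of $N(x)$ is easy, from diameter $2$ plus vertex-transitivity, but primitivity is the crux. You would have to exclude neighbourhoods such as a disjoint union $rK_k$ of $r\ge 2$ cliques of size $k\ge 2$, or its complement. The finite primitive homogeneous graphs $C_5$ (where $N(x)$ is two non-adjacent vertices) and the $3\times 3$ rook's graph (where $N(x)\cong 2K_2$) show that imprimitive neighbourhoods are perfectly compatible with a primitive homogeneous graph. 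Ruling them out in the infinite case needs a substantial further argument, which you do not supply. In your sense of primitivity the claim is even false for Henson's triangle-free graph, whose neighbourhoods are infinite anticliques, so your induction must also accommodate degenerate neighbourhoods.

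Even granting the ages of $N(x)$ and $\overline{N}(x)$, your inductive step does not go through. To embed $A$ around a vertex $y$, you must place $N_A(y)$ inside $N(x)$ and $A\setminus(N_A(y)\cup\{y\})$ inside $\overline{N}(x)$ \emph{with prescribed edges between the two parts}. Neither the induction hypothesis applied to each neighbourhood separately nor primitivity says anything about these cross edges. In the Rado case $n=m=\infty$, the parameters of $N(x)$ and $\overline{N}(x)$ need not drop, so no induction on $(n,m)$ is available. An induction on $|A|$ instead requires knowing in advance that $N(x)$ again embeds arbitrarily large anticliques and is primitive, which is the same unproved core. The final ``counting and amalgamation argument'' realising every $K_n$-free type is likewise only named, not given.

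So what you have is a plausible roadmap rather than a proof. For comparison, the paper gives no proof of this statement at all: it imports the classification from Lachlan and Woodrow as a black box, precisely because these steps require a long and intricate argument.
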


\begin{claim}
    All countable homogeneous graphs have the finite length property over any field.
\end{claim}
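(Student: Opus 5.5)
We go through the Lachlan--Woodrow classification and show that every graph on the list is first-order interpretable in a structure already known to have the finite length property over any field. Lemma~\ref{lem:interpretation-preserves-finite-length} then finishes the argument.

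\textbf{Complementation.} The complement of a graph $\B$ has the same automorphism group as $\B$, since non-adjacency between distinct vertices is definable from adjacency. The chains of $\Aut$-equivariant subspaces of $\Lin_\FF \B^d$ are therefore literally the same for $\B$ and its complement. So it suffices to treat the three families (i)--(iii).

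\textbf{Cases (i) and (ii).} The Rado graph and Henson's $K_n$-free graphs for $n\ge 3$ are the Fraïssé limits $\A_0$ in rows~2 and~3 of Figure~\ref{fig:some-free-amalg-classes}. These are free amalgamation classes of irreflexive structures over the single binary relation $E$. Corollary~\ref{cor:free-finite-length} applies directly to them.

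\textbf{Case (iii): $m$ disjoint copies of $K_n$.} Here we exhibit an interpretation, with three subcases.
\begin{itemize}
    \item If $m=\infty$ and $n=\infty$, take $X=\A^2$ over the equality atoms, with $(a,b)$ adjacent to $(a',b')$ iff $a=a'$ and $b\neq b'$. This equivariant relation yields countably many disjoint countable cliques.
    \item If $m$ is finite and $n=\infty$, take $X=\{1,\dots,m\}\times\A$, realised as an orbit-finite set, for instance as the disjoint union of $m$ copies of $\A$. Declare $(i,a)$ adjacent to $(j,b)$ iff $i=j$ and $a\neq b$.
    \item If $m=\infty$ and $n$ is finite, use $X=\A\times\{1,\dots,n\}$ with the analogous relation.
\end{itemize}
Disjoint unions of finitely many copies of an orbit-finite set are orbit-finite, via Definition~\ref{def:orbit-finite-set-alternative}. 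By Claim~\ref{claim:ordered-and-equality-have-finite-length}, the equality atoms have the finite length property over any field, so Lemma~\ref{lem:interpretation-preserves-finite-length} gives the claim in all three subcases.

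The same graphs can alternatively be interpreted in the ordered atoms. The only mild obstacle is the bookkeeping for case~(iii): the interpreted structure must be countable and isomorphic to the intended graph, and the relations must be $\Aut(\A)$-equivariant. Both are immediate from the explicit definitions, since the relations are defined using only equality.
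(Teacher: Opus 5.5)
Your proof is correct and follows the paper's route: reduce via complementation to the three Lachlan--Woodrow families, handle the Rado graph and Henson's graphs with Corollary~\ref{cor:free-finite-length} (rows~2 and~3 of Figure~\ref{fig:some-free-amalg-classes}), and handle disjoint unions of cliques by Lemma~\ref{lem:interpretation-preserves-finite-length}. The only difference is in case~(iii): the paper cites an interpretation in the ordered atoms from \cite{BodirskyBodorMarimon_25}, whereas you write explicit interpretations in the equality atoms, which are valid and automatically also interpretations in the ordered atoms, since $\Aut(\Q,<) \subseteq \Aut(\Q,=)$.
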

\begin{claimproof}
    As the complement of a graph $\A$ is a first-order reduct of $\A$,
    when studying the length of $\Lin_\FF \A^d$ we only need to examine the structures of kinds (i), (ii) and (iii). 
    For the first two kinds, the finite length property follows directly from Corollary~\ref{cor:free-finite-length} by looking at rows~2 and~3 of Figure~\ref{fig:some-free-amalg-classes}. 
    The structures of kind (iii) are all first-order interpretable in the ordered atoms~--~see~\cite[Prop.~2.19 and Cor.~2.20]{BodirskyBodorMarimon_25}, 
    and hence inherit the finite length property from them by Lemma~\ref{lem:interpretation-preserves-finite-length}.
\end{claimproof}

\begin{fact}[{\cite{Lachlan1984}}]
    Any countable homogeneous tournament is isomorphic to one of:
    \begin{romanenumerate}
        \item $(\Q, <)$;
        \item the Fraïssé limit of all finite tournaments;
        \item 
        the \emph{dense local order} $(\{e^{i r} \mid r \in \Q\}, \to)$ where $x \to y$ if and only if $0 < \arg(y / x) < \pi$,
        i.e.~from $x$ to $y$ (which cannot be antipodal) the shorter circular path is anticlockwise.
    \end{romanenumerate}        
\end{fact}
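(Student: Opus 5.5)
The plan is to follow Lachlan's original strategy, in the streamlined form later given by Cherlin: a case analysis on which small tournaments embed in a countable (infinite) homogeneous tournament $T$, together with an induction through vertex neighbourhoods. The finite homogeneous tournaments (the one-point tournament and the $3$-cycle $C_3$) are set aside, since the statement concerns the infinite countable case. I would first record two easy facts. (a) By Fraïssé's theorem, $T$ is determined up to isomorphism by its age, which is an amalgamation class of finite tournaments. (b) For any vertex $v$, the out-neighbourhood $N^+(v)$ and the in-neighbourhood $N^-(v)$ are again homogeneous tournaments. This holds because an isomorphism between finite substructures of $N^+(v)$ extends, by adding $v \mapsto v$, to a partial isomorphism of $T$. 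That extension is realised by an automorphism of $T$ which fixes $v$ and hence preserves $N^+(v)$. By (a), the three target structures are characterised by their ages: all finite tournaments, all finite linear orders, and the age of the dense local order. The latter consists of the finite tournaments in which every out-neighbourhood and every in-neighbourhood is transitive.

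The cases are then as follows. If $C_3$ does not embed, then $T$ is transitive, hence a linear order. Homogeneity and infinitude force it to be dense without endpoints, so $T \cong (\Q,<)$ by Cantor. If every finite tournament embeds, then $T$ is the Fraïssé limit of all finite tournaments. The remaining case, where $C_3$ embeds but some finite tournament is omitted, is the heart of the proof. There the goal is to show that $N^+(v)$ and $N^-(v)$ both omit $C_3$. By the first case they are then copies of $(\Q,<)$, and the age of $T$ is exactly the age of the local order, so $T$ is the dense local order.

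The main obstacle is this last implication. I would take a minimal omitted tournament $F$ and argue by induction on $|F|$, using amalgamation to push $F$ down into neighbourhoods. Suppose $N^+(v)$ contains a copy of $C_3$. Since $N^+(v)$ is itself homogeneous, the first case applies to it as well, and I would try to show that $N^+(v)$ must be the random tournament. Amalgamating over the configuration $v \to C_3$ should then allow every finite tournament to be built inside $T$ (writing $F = F' \cup \{w\}$ and realising $F'$ inside $N^+(v)$ or $N^-(v)$), which contradicts the omission of $F$. Making this step rigorous is where Lachlan's delicate combinatorics lies. I would first try Cherlin's approach: analyse the $4$-vertex tournaments containing $C_3$ and show that omitting any finite tournament forces omission of the specific $4$-vertex tournament consisting of a vertex dominating a $3$-cycle (and, dually, a vertex dominated by a $3$-cycle). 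This is precisely the statement that neighbourhoods are transitive. I would then verify, via homogeneity and a back-and-forth argument, that the resulting age coincides with that of the circular order $\{e^{ir}\}$.
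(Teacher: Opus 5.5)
The paper gives no proof of this statement: it quotes Lachlan's classification as a black box. As you rightly note, it must be read for countably \emph{infinite} tournaments, since the one-point tournament and $C_3$ are homogeneous too. Judged on its own, your outline is sound in its peripheral parts: homogeneity of $N^\pm(v)$ via automorphisms fixing $v$, the $C_3$-free case giving $(\Q,<)$, and the universal case giving the random tournament. However, the step that carries all the content of Lachlan's theorem is only stated as an intention. That step is: \emph{a homogeneous tournament that embeds $1\to C_3$ (a vertex dominating a $3$-cycle), or dually $C_3\to 1$, embeds every finite tournament.} Your fallback, ``analyse the $4$-vertex tournaments to show that omitting some finite tournament forces omission of $1\to C_3$ and $C_3\to 1$'', just restates this claim. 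No inspection of $4$-element tournaments can link the presence of $1\to C_3$ to the absence of an arbitrarily large $F$.

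The mechanism you do sketch fails for two concrete reasons.

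First, the induction on the size of a minimal omitted $F$ does not descend. $N^+(v)$ omits $F''$ exactly when $T$ omits $1\to F''$ (a new source added to $F''$). So $N^+(v)$ omits $F$, but nothing in your argument shows that it omits anything smaller. Hence no induction hypothesis applies to $N^+(v)$, and ``show that $N^+(v)$ is the random tournament'' is just the original problem one level down.

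Second, writing $F=F'\cup\{w\}$ and realising $F'$ inside $N^+(v)$ or $N^-(v)$ handles only those $F$ that have a source or a sink. In general $w$ has both in- and out-neighbours in $F'$. You then need a vertex realising a prescribed \emph{mixed} one-point extension of a copy of $F'$, which is exactly the extension property of the random tournament. Amalgamation cannot deliver this on its own. If you amalgamate $F'_1\cup\{w\}$ with $F'_2\cup\{w\}$ over their common part, the edges between $F'_1\setminus F'_2$ and $F'_2\setminus F'_1$ are outside your control. So the tournament you obtain on $F'_1\cup F'_2$ need not be $F'$.

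A genuinely new argument exploiting $1\to C_3$ is needed here, and that is where the substance of Lachlan's proof lies.

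There is also a lesser gap. Transitivity of $N^\pm(v)$ yields only one inclusion of ages. You still need that the finite locally transitive tournaments are exactly the finite subtournaments of the circle, and that $T$ embeds all of them. This part is routine by comparison.
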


\begin{claim}
    All countable homogeneous tournaments have the finite length property over any field.
\end{claim}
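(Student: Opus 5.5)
We go through the three cases of Lachlan's classification separately, in each case reducing to a structure already known to have the finite length property. By Lemma~\ref{lem:interpretation-preserves-finite-length}, it suffices to show that each homogeneous tournament is first-order interpretable in a structure with the finite length property over every field.

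Case (i), $(\Q,<)$, is Claim~\ref{claim:ordered-and-equality-have-finite-length}.

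For case (ii), the random tournament $\mathbb{T}$, we use Theorem~\ref{thm:ordered-free-amalg-has-finite-length} with $\sigma_0=\{E\}$ and $\Cclass_0$ the class of all finite irreflexive digraphs, i.e.\ $\Forb(\{A,K_2\})$. This is a free amalgamation class by Example~\ref{ex:free-amalg-graphs}. Its generically ordered expansion $\A$ has the finite length property. Inside $\A$ we define the relation
\[
x\to y \iff (x<y\wedge \neg E(x,y))\vee(y<x\wedge E(y,x)),
\]
which reverses the order exactly along $E$-edges. This is a quantifier-free definable tournament on $\A$. We then check homogeneity and universality. Every finite tournament $T$ arises from a finite ordered digraph: pick any total order on $T$ and put $E(y,x)$ for each pair $y<x$ with $x\to y$. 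Any finite partial isomorphism of $\to$ can therefore be realised by choosing expansions with the same order pattern. A back-and-forth argument then shows that $(\A,\to)$ satisfies the one-point extension property characterising the random tournament. Concretely, given a finite $U\subseteq\A$ and a prescribed pattern of $\to$ between a new point and $U$, we fix where the new point should sit in the order (any gap), translate the pattern into $E$-relations, and use the extension property of $\A$. So $(\A,\to)\cong\mathbb{T}$ is a first-order reduct of $\A$, and it inherits the finite length property.

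For case (iii), the dense local order $S(2)$, we aim to interpret it in row~5 of Figure~\ref{fig:some-free-amalg-classes}. There $\A_0=(\Q,P)$ with $P$ a dense codense unary predicate. Its generically ordered expansion $(\Q,<,P)$ has the finite length property by Theorem~\ref{thm:ordered-free-amalg-has-finite-length}, and this is where the hint about the right half-circle comes in. Write each point as $e^{ir}$ and identify the right half-circle with the points in $P$. Pairing each point with its antipode, the left half-circle is in bijection with the right one. Hence $S(2)$ can be coded by the rational order together with a colour $P$ saying which half a point lies in. Viewing the circle as $(-\pi/2,3\pi/2)$ and projecting modulo~$\pi$ to the dense order of directions, the colour records which of the two antipodal points is meant. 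Then $x\to y$ is expressed quantifier-freely from $<$ and $P$: if $x$ and $y$ have the same colour, $x\to y$ iff $x<y$; if they have different colours, $x\to y$ iff $y<x$ (after checking the sign convention). With the generic order, points of each colour are dense and interleaved. A back-and-forth argument shows that this two-coloured dense order is isomorphic to the set of directions of $S(2)$ with halves marked, so $S(2)$ is a reduct of the ordered row-5 structure.

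The main obstacle is this third case. We have to get the exact combinatorial encoding of the local order via the half-circle colouring right, including the case rule and the orientation, and verify the resulting isomorphism with $S(2)$. Case (ii) needs only a routine extension-property check.
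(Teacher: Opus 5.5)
Your proposal is correct and is essentially the paper's proof. Case (i) is immediate, and cases (ii) and (iii) exhibit the tournament as a quantifier-free reduct of a generically ordered free amalgamation structure. In case (ii) you use the same order-flipping trick; the paper applies it to the ordered Rado graph with $x\to y \iff x\neq y\wedge(E(x,y)\Leftrightarrow x<y)$ rather than to ordered digraphs, which is immaterial since only edges from smaller to larger elements are used. In case (iii) you use exactly the paper's encoding of the dense local order in the ordered row-5 structure.

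The sign convention you left to check is right. Your rule (same colour: $x\to y$ iff $x<y$; different colours: $x\to y$ iff $y<x$) is equivalent to the paper's definition $x<y \iff x\neq y\wedge\bigl((P(x)\Leftrightarrow P(y))\iff x\to y\bigr)$, and your order on directions modulo $\pi$ is precisely this $<$.
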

\begin{claimproof}
    \begin{romanenumerate}
        \item
        Already shown in Claim~\ref{claim:ordered-and-equality-have-finite-length}.

        \item
        Recall that the ordered Rado graph, i.e.~the generically ordered expansion $(\A, E, <)$ of $(\A_0, E)$ from row~2 in Figure~\ref{fig:some-free-amalg-classes}, has the finite length property over any field.
        Consider its first-order reduct $(\A, \to)$, where we define $x \to y$ by
        \[
            x \neq y \qquad\wedge\qquad E(x, y) \iff x < y.
        \]
        Then $(\A, \to)$ is a tournament that embeds all finite tournaments in such a way that, by a back-and-forth argument, one sees that it is homogeneous and isomorphic to the Fraïssé limit of all finite tournaments \cite[Sec.~2.2.1]{BPP15}.
        Consequently, this latter structure inherits the finite length property from the ordered Rado graph.

        \item
        This time, consider $\mathbb{S} = \{ e^{i r} \mid r \in \Q \}$ equipped with $\to$ from (iii) and with
        \[
            P(z) \iff -\frac{\pi}{2} < \arg(z) < \frac{\pi}{2}
        \]
        from row~5 in Figure~\ref{fig:some-free-amalg-classes}.
        Define a new binary relation $x < y$ by
        \[
            x \neq y \qquad\wedge\qquad (P(x) \Leftrightarrow P(y)) \iff x \to y.
        \]
        Then $(\mathbb{S}, P, <)$ embeds all total orderings of all $\{P\}$-structures and satisfies an extension property that, 
        via a back-and-forth argument, shows it is homogeneous and isomorphic to the generically ordered expansion of $(\mathbb{S}, P)$~--~see~\cite[p.~18]{Nvt13}. 
        Hence $(\mathbb{S}, P, <)$ has the finite length property over any field by Theorem~\ref{thm:ordered-free-amalg-has-finite-length}. 
        But we may define $\to$ by an analogous formula using $P$ and $<$, so the dense local order $(\mathbb{S}, \to)$ inherits this finite length property as a first-order reduct.
        \claimqedhere
    \end{romanenumerate}
\end{claimproof}

%% file: src/appendix-cogs.tex
\section{Proof of Theorem~\ref{thm:cog-span-generally}}\label{sec:appendix-cogs}
Recall the setting of Section~\ref{sec:free-amalg} and the notations of Section~\ref{sec:cogs-turn}.
Consider a subspace $\EE \subseteq \FF^n$, and heed the warning of~\ref{rem:FF-EE}:
remember that a cog is of the form $\lambda \cdot a \between b$ now.
Finally, fix an $S$-ordered orbit $\cal O \subseteq \A^I$.

\subsection{Subvectors, locations, conflicts}
We begin by introducing some additional terminology and notation. 
First, let us make explicit a view we have tacitly taken:
with $\cal O$ as a standard basis, a vector $v \in \Lin_\EE \cal O$ is just a finite set of pairs in $\EE \times \cal O$.
A \emph{subvector} of $v$ is a subset of these pairs. 
We write $\vsup{v}\subseteq {\cal O}$ for the set of tuples that are present in $v$. 
For a finite subset $T\subseteq\cal O$, we write $\sqrt T\subseteq\A$ for the set of atoms present anywhere in $T$.

For any $i \in I$ and $a \in \A$ (which is equal to $b_i$ for some $b \in \cal O$), we write 
\[
    \cal O^{i:a} = \{c \in \cal O \mid c_i = a\};
\]
this is an $\Aut(\A / S a)$-orbit (containing $b$), and its projection $\cal O^{i:a} |^{-i}$ is $Sa$-ordered.
For a vector $v \in \Lin_\EE \cal O$, by
\[
    v^{i:a} \in \Lin_\EE \cal O^{i:a}
\]
we mean the subvector of $v$ consisting of all pairs in $\EE \times \cal O^{i:a}$.

\begin{lemma}\label{lem:balanced-projected-subvector}
    Let $v \in \Lin_\EE \cal O$ be balanced. 
    Then any projected subvector $v^{i:a}|^{-i} \in \Lin_\EE \cal O^{i:a}|^{-i}$ is also balanced.
\end{lemma}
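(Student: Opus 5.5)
Write $w = v^{i:a}|^{-i} \in \Lin_\EE \cal O^{i:a}|^{-i}$, where $\cal O^{i:a}|^{-i} \subseteq \A^{I\setminus\{i\}}$. Being balanced here means that $w|^{-j} = 0$ for every $j \in I \setminus \{i\}$. We show this by computing $w|^{-j}$ directly from the balancedness of $v$, specifically from $v|^{-j} = 0$.

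Fix $j \neq i$ and a tuple $c \in \cal O^{i:a}|^{-i,-j}$, that is, a tuple indexed by $I\setminus\{i,j\}$. The coefficient of $c$ in $w|^{-j}$ is
\[
    \sum_{b \in \cal O,\ b_i = a,\ b|^{I\setminus\{i,j\}} = c} v(b).
\]
Let $c' \in \A^{I\setminus\{j\}}$ be the tuple extending $c$ by $c'_i = a$. The coefficient of $c'$ in $v|^{-j}$ is
\[
    \sum_{b \in \cal O,\ b|^{-j} = c'} v(b).
\]
The two index sets coincide: a tuple $b$ with $b|^{-j} = c'$ has $b_i = a$ and restricts to $c$ on $I\setminus\{i,j\}$, and conversely. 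So the two coefficients are equal, and the second is $0$ because $v|^{-j} = 0$. Hence $w|^{-j} = 0$. This holds coefficientwise in $\EE$, since $\EE$ is closed under addition and all sums here are finite.

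Two bookkeeping points need checking. First, $v^{i:a}$ collects exactly the pairs of $v$ whose tuple has $i$-th entry $a$, so restricting to the subvector loses nothing in the sum above. Second, projections commute, in the sense that $(b|^{-i})|^{-j} = b|^{I\setminus\{i,j\}}$. There is also the case $c' \notin \cal O|^{-j}$; then both coefficients are the empty sum, so the equality still holds.

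The only potential obstacle is this careful identification of the index sets, together with the remark that the ambient orbit for $w$ is $\cal O^{i:a}|^{-i}$, with support $Sa$, rather than $\cal O|^{-i}$. This does not affect the definition of balanced, since that definition only involves the projection maps. The case $|I| = 1$ is vacuous: $w$ lives on the empty tuple and there are no indices $j$ left to check.
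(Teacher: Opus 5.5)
Your proof is correct and is essentially the paper's argument written out coefficientwise. The paper writes $v|^{-j} = \sum_a v^{i:a}|^{-j}$ and observes that for $j \neq i$ the summands are separated by their $i$-th entries (which $|^{-j}$ preserves), so each $v^{i:a}|^{-j}$ vanishes; it then commutes the projections to get $v^{i:a}|^{-i}|^{-j} = 0$, and your identification of the coefficient of $c$ in $w|^{-j}$ with the coefficient of $c'$ in $v|^{-j}$ packages exactly these two steps into one.
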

\begin{proof}
    Let $j \in I \setminus \{i\}$. 
    By assumption we have \[
        0 = v|^{-j} = \sum_a v^{i:a}|^{-j}.
    \]
    This sum is finite: it runs over those atoms $a$ that occur as the $i$-th entries in $\vsup{v}$.
    By looking at $i$-th entries, we see that each $v^{i:a}|^{-j}$ must be the zero vector.
    Hence $0 = v^{i:a}|^{-j}|^{-i} = v^{i:a}|^{-i}|^{-j}$,
    which shows that $v^{i:a}|^{-i}$ is balanced.
\end{proof}

For a finite subset $T \subseteq \cal O$, a {\em location} in $T$ is a pair $(i,a)\in I\times\A$ such that $a=c_i$ for some $c\in T$. 
Note that for any fixed $i,j\in I$, for all $c\in\cal O$ the atoms $c_i$ and $c_j$ are related in the same way in $\A_0$ (i.e.~with respect to equality and binary relations in $\sigma_0$). We say that two locations $(i,a)$ and $(j,b)$ in $T$ are in:
\begin{itemize}
\item an {\em equational conflict}, if $i\neq j$ but $a=b$, and
\item a {\em relational conflict}, if $a$ and $b$ are related in $\A_0$ but not in the same way as $c_i$ and $c_j$ for $c\in\cal O$.
\end{itemize}
(A situation where $a \neq b$ are not related by any relation in $\sigma_0$ at all does not constitute a conflict, even if $c_i$ and $c_j$ are related.) 
As $a$ and $b$ are related if they are equal, we note that an equational conflict is a special case of a relational one.

By a location in a vector $v$ we mean a location in the set $\vsup{v}$.
The prototypical examples of vectors which are free from any conflicts are cogs (or any subvectors of cogs): 
note that the locations in a cog $a^+ \between a^-$ are exactly those in $\{a^+ ,a^-\}$, and these have no conflicts if $a^+ \parallel a^-$ is a duo.

In the following proof we will often manipulate many duos and cogs at once, 
so we will benefit from a concise notation for them. 
An $\cal O$-duo will be denoted by a single letter as $a^\pm$; 
its constituent parts will then be denoted by $a^+$ and $a^-$, 
so that $a^\pm=a^+\parallel a^-$. 
Sets of duos will be denoted with capital letters such as $A^\pm$, 
and sometimes we will slightly abuse this notation and write 
$A^\pm$ to mean $\bigcup_{a^\pm\in A^\pm}\{a^+,a^-\}$,
$A^+$ to mean $\bigcup_{a^\pm\in A^\pm}\{a^+\}$, 
and $A^+ a$ to mean $\bigcup_{a^\pm\in A^\pm}\{a^+ a\}$.

\subsection{Conflict resolution lemmas}
The following sister lemmas, relying on free amalgamation as distilled in Lemma~\ref{lem:free-fresh}, show how to merge conflict-free subsets of $\cal O$ in a way that avoids introducing new conflicts. 
This will be useful in Sections~\ref{subsec:unobstructed} and~\ref{subsec:unambiguous}.

\begin{lemma}\label{lem:?}
    Let $K$ and $V_0 \subseteq V$ be finite subsets of $\cal O$ such that 
    both $V_0\cup K$ and $V$ are free from equational conflicts.
    Then there exists some $\pi\in\Aut(\A)$ that, while fixing all atoms in $S$ and in $\sqrt{V_0}$, makes $V \cup \pi(K)$ free from equational conflicts.
\end{lemma}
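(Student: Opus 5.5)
The plan is to move all atoms of $K$ that are not already anchored in $S\cup\sqrt{V_0}$ to fresh positions outside $\sqrt V$, while fixing $X = S\cup\sqrt{V_0}$. We then check that every equational conflict in $V\cup\pi(K)$ would pull back to an equational conflict inside $V$ or inside $V_0\cup K$, both of which are excluded by hypothesis.

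\emph{Construction of $\pi$.} Let $X = S\cup\sqrt{V_0}$, $Y=\sqrt V\setminus X$ and $Z=\sqrt K\setminus X$, all finite. We want $\pi\in\Aut(\A/X)$ with $\pi(Z)\cap \sqrt V=\emptyset$.
\begin{itemize}
\item Amalgamate the two structures $X\cup Z$ and $X\cup Y\cup Z$, both in $\Cclass$, over $X\cup Y\cup\dots$; more precisely, amalgamate $X\cup Z$ with $X\cup Y\cup Z$ over the common part $X$. On the $\sigma_0$-reduct use the free amalgam in $\Cclass_0$, and on $<$ use any order amalgam. This is exactly the amalgamation argument already given for $\Cclass$.
\item The result contains a disjoint copy $Z'$ of $Z$ over $X$.
\item By homogeneity, embed this amalgam into $\A$ identically on $X\cup Y\cup Z$.
\item Again by homogeneity, extend the partial isomorphism that is the identity on $X$ and sends $Z\to Z'$ to an automorphism $\pi$.
\end{itemize}
This is the multi-element analogue of Lemma~\ref{lem:free-fresh}, and the proof is the same; alternatively one can iterate Lemma~\ref{lem:free-fresh} one element of $Z$ at a time, enlarging the fixed set each time. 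Then $\pi$ fixes $S\cup\sqrt{V_0}$, and $\pi(Z)$ is disjoint from $\sqrt V$ (and from $X$).

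\emph{Checking conflicts.} Consider two locations $(i,a)$ and $(j,a)$ in $V\cup\pi(K)$ with the same atom $a$. We must show $i=j$.
\begin{itemize}
\item If both come from $V$, then $i=j$ because $V$ is free from equational conflicts.
\item If both come from $\pi(K)$, pull back along $\pi$, which preserves equality of atoms. They become locations $(i,\pi^{-1}a)$ and $(j,\pi^{-1}a)$ in $K\subseteq V_0\cup K$, so $i=j$.
\item In the mixed case, $(i,a)$ is a location in $V$ and $a=\pi(b)$ where $(j,b)$ is a location in $K$.
\begin{itemize}
\item If $b\in Z$, then $\pi(b)\notin\sqrt V$, contradicting $a\in\sqrt V$.
\item Otherwise $b\in X$. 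Since $\cal O$ is $S$-ordered, tuples never contain atoms of $S$, so in fact $b\in\sqrt{V_0}$, and $\pi(b)=b=a$. Pick a location $(k,b)$ in $V_0$.
\item Since $V_0\subseteq V$ and $V$ has no equational conflicts, $i=k$. Since $V_0\cup K$ has no equational conflicts, $j=k$. Hence $i=j$.
\end{itemize}
\end{itemize}

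\emph{Main obstacle.} The only real work is the existence of $\pi$, i.e.\ that $\Aut(\A/X)$ can push $Z$ off any given finite set. This is where free amalgamation (no algebraicity) together with the flexibility of the generic order is used. I expect it to go through exactly as in Lemma~\ref{lem:free-fresh}; only the order on $Z'$ needs care, and placing each new element just above its original, as done there, works.
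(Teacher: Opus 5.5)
Your proof is correct, but it is organised differently from the paper's.

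The paper argues by induction on the number of equationally conflicting locations in $V\cup K$. It picks one conflict, $(i,a)$ in $K$ against $(j,a)$ in $V$. It shows $a\notin S\cup\sqrt{V_0}$, by the same reasoning as your mixed case run contrapositively. It then applies Lemma~\ref{lem:free-fresh} with $z=a$, $Y=\emptyset$ and fixed set $S\cup\sqrt{V\cup K}\setminus\{a\}$, so only that one atom moves. Since $V_0\cup\tau(K)=\tau(V_0\cup K)$ stays conflict-free, the induction hypothesis finishes.

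You instead move all unanchored atoms $Z=\sqrt K\setminus(S\cup\sqrt{V_0})$ at once. This needs a multi-point version of Lemma~\ref{lem:free-fresh}, and your direct free-amalgam argument for it is sound. No care about the order is needed there: any amalgam of the two linear orders works, since you never need $\pi(z)>z$. The iterated alternative you mention also works, but it needs some bookkeeping, because $Y$ and $Z$ may overlap while Lemma~\ref{lem:free-fresh} requires $X$, $Y$, $\{z\}$ to be disjoint.

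The paper's route is economical. It uses Lemma~\ref{lem:free-fresh} exactly as stated, and the same inductive template is reused for Lemma~\ref{lem:!}, where the set $Y$ to avoid depends on the conflict at hand. Your route avoids induction and makes the mechanism transparent: atoms anchored in $\sqrt{V_0}$ cannot take part in a conflict, and everything else is simply pushed off $\sqrt V$. In fact, because the free amalgam leaves $Z'$ unrelated to $Y\cup Z$, your single $\pi$ with the same case split also proves Lemma~\ref{lem:!} directly, without first invoking the present lemma.
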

\begin{proof}
    Fix $V_0, V$ and induct on the number of equationally conflicting locations in $V \cup K$. 
    Take any such locations $(i,a)$ and $(j,a)$, where $i \neq j$; 
    without loss of generality $(i,a)$ is a location in $K$ and $(j,a)$ a location in $V$. 

    Suppose that $(i', a)$ were a location in $V_0$.
    Since there are no equational conflicts in $V_0\cup K$ or in $V$, 
    we see that $i = i'$ and $i' = j$, which contradicts our assumption.
    So $a \not\in \sqrt{V_0}$.
    Also $a\not\in S$, as $\cal O$ is $S$-ordered.
    Put: 
    \[
        X = S \cup \sqrt{V \cup K} \setminus \{a\},
    \]
    and note that $X$ contains $S \cup \sqrt{V_0}$.
    Use Lemma~\ref{lem:free-fresh} (putting $z=a$ and $Y=\emptyset$) to obtain an automorphism $\tau \in \Aut(\A/X)$ such that $\tau(a) \not\in X \cup \{a\}$. 
    
    In the set $V\cup\tau(K)$, the conflicting location $(i,a)$ disappears and no new equational conflicts are created, 
    so the number of equationally conflicting locations drops compared to $V\cup K$.
    Because $V_0 \cup \tau(K)$ is still equationally conflict-free,
    the inductive hypothesis gives us some $\pi \in \Aut(\A / S \cup \sqrt{V_0})$ such that $V \cup \pi \tau(K)$ is free from equational conflicts.
\end{proof}

\begin{lemma}\label{lem:!}
    Let $K$ and $V_0 \subseteq V$ be finite subsets of $\cal O$ such that both $V_0\cup K$ and $V$ are free from relational conflicts.%
    \footnote{Here and in Lemma~\ref{lem:?}, we may weaken the assumption that $V_0 \subseteq V$ and only require every location in $V_0$ to be a location in $V$.} 
    Then there exists some $\pi\in\Aut(\A)$ that, while fixing all atoms in $S$ and in $\sqrt{V_0}$, makes $V \cup \pi(K)$ free from relational conflicts.
\end{lemma}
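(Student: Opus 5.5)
We would mirror the proof of Lemma~\ref{lem:?}: fix $V_0$ and $V$, and induct on the number of pairs of relationally conflicting locations in $V\cup K$. If there are none, take $\pi$ to be the identity. Otherwise pick a conflicting pair of locations $(i,a)$ and $(j,b)$. Neither $V\cup\emptyset$ nor $V_0\cup K$ has conflicts, so we may assume that $(i,a)$ is a location in $K$ but not in $V$. Symmetrically, $(j,b)$ is a location in $V$ but not in $K$. (If both were in $K$, or $(i,a)$ were also a location in $V$, we would get a conflict inside $V_0\cup K$ or inside $V$.)

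The first step is to show that $a\notin S\cup\sqrt{V_0}$. We have $a\notin S$ because $\cal O$ is $S$-ordered. Suppose instead that $(i',a)$ is a location in $V_0$. Since $V_0\cup K$ has no equational conflicts, $i'=i$, so $(i,a)$ is a location in $V_0\subseteq V$. The pair $(i,a),(j,b)$ would then be a conflict inside $V$, which is impossible. So the atom $a$ can be moved.

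The second step applies Lemma~\ref{lem:free-fresh} with $z=a$, $X=S\cup\sqrt{V_0}\cup(\sqrt{K}\setminus\{a\})$ and $Y=\sqrt{V}\setminus X$. This gives $\tau\in\Aut(\A/X)$ such that $\tau(a)$ is unrelated to every atom of $Y$ and to $a$. Several consequences follow:
\begin{itemize}
    \item $\tau$ fixes $S\cup\sqrt{V_0}$.
    \item $\tau$ is an automorphism, so $V_0\cup\tau(K)=\tau(V_0\cup K)$ is still free of relational conflicts.
    \item Inside $\tau(K)$ only $a$ moves, and $\tau(a)$ is unrelated to all atoms of $V$ outside $X$. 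Hence every new pair between a location of $\tau(K)$ carrying $\tau(a)$ and a location of $V$ either involves an atom of $X$ or is not a conflict at all. The paper stipulates that unrelated distinct atoms never conflict, and $\tau(a)\ne a$ is also unrelated to $a$.
    \item The conflict at $(i,a)$ disappears, because $b$ cannot lie in $X\cap\sqrt{V_0}$ in a way that persists.
\end{itemize}
Then we apply the induction hypothesis to $\tau(K)$ and compose the two automorphisms.

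The main obstacle is the bookkeeping in the second step. Pairs of the form $(\tau(a),x)$ with $x\in X\cap\sqrt V$ keep the same relation as $(a,x)$, so they are old pairs transported by $\tau$. We must check that such a pair conflicts only if the original did, and that at least one conflict really vanishes. The delicate case is $b\in X$, that is, $b\in\sqrt{V_0}$ or $b\in\sqrt K$. If $b\in\sqrt{V_0}$, the relation between $a$ and $b$ is preserved by $\tau$. We would then want to derive a contradiction instead: $(j',b)$ is a location in $V_0$ and $V_0\cup K$ has no conflict, so $j'\neq j$. That yields an equational conflict in $V$, unless the conflict is relational with the same $j$. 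To cover this, we expect to need a preliminary reduction by Lemma~\ref{lem:?} making $V\cup K$ equationally conflict-free. After that reduction, a location in $V_0$ carrying $b$ pins down its index, and the conflict between $(i,a)$ and $(j,b)$ would already occur in $V_0\cup K$. So we would first invoke Lemma~\ref{lem:?} and only then run the induction above, choosing the measure to be the number of conflicting pairs involving atoms outside $S\cup\sqrt{V_0}$.
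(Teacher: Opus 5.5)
Your proposal is correct and follows essentially the paper's proof. You reduce via Lemma~\ref{lem:?} to the case where $V\cup K$ has no equational conflicts, induct on the number of relational conflicts, and use Lemma~\ref{lem:free-fresh} with $z=a$ to replace the offending atom of $K$ by a fresh copy fixed over $S\cup\sqrt{V_0}$ and the rest of $\sqrt K$; making $\tau(a)$ unrelated to all of $\sqrt V\setminus X$ instead of only to the atoms conflicting with $(i,a)$ is a harmless variation. One correction to your ``delicate case'': $b\in\sqrt{V_0}$ is excluded even without the reduction, since $V$ has no equational conflicts and so $(j,b)$ would be a location of $V_0$; the reduction is what rules out $b\in\sqrt K\setminus\{a\}$ and $a\in\sqrt V$, which you need for $b\in Y$ and for the disjointness hypothesis of Lemma~\ref{lem:free-fresh}.
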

\begin{proof}
    By Lemma~\ref{lem:?} we may assume that $V \cup K$ is free from {\em equational} conflicts. 
    As before, fix $V_0, V$ and proceed by induction on the number of relationally conflicting locations in $V \cup K$.
    
    Let $(i, a)$ and $(j, b)$ be in a relational conflict; 
    without loss of generality $(i, a)$ is a location in $K$ and $(j, b)$ in $V$. 
    This is not an equational conflict, so $a\neq b$ (but possibly $i=j$). 
    Since there are no conflicts in $V_0\cup K$ or in $V$, 
    we see that $(i, a)$ is not a location in $V$ and $(j, b)$ is not a location in $V_0 \cup K$~--~i.e.~that, 
    since there are no equational conflicts in $V \cup K$, we have $a\not\in \sqrt{V}$ and $b\not\in\sqrt{V_0\cup K}$.
    Also, $a,b\not\in S$.
    
    Let $Y$ consist of all the atoms $b'$ that are in a relational conflict with $(i,a)$ in $V\cup K$; 
    we have just shown that $Y$ does not contain $a$ and is disjoint with $S\cup\sqrt{V_0\cup K}$. 
    Put:
    \[
        X = S \cup \sqrt{V \cup K} \setminus (Y \cup \{a\}).
    \]
    Then $X, Y, \{a\}$ are pairwise disjoint, and $X$ contains $S \cup \sqrt{V_0}$. 
    It also contains all atoms in $\sqrt{K}$ except $a$.
    Using Lemma~\ref{lem:free-fresh}, find some $\tau\in \Aut(\A/X)$ such that $\tau(a) \not\in X \cup Y \cup \{a\}$ 
    and $\tau(a)$ is not related to any atom in $Y$. 
    In $V\cup\tau(K)$ the conflicting location $(i,a)$ disappears and no new conflicts are created, 
    so the conclusion follows from the inductive hypothesis.
\end{proof}

\subsection{Conflict-free vectors}\label{subsec:unobstructed}
\begin{claim}\label{claim:!-free-decomposition}
    If $v \in \Ker_\EE \cal O$ is free from conflicts, then it can be written as a sum of $\cal O$-cogs:
    \[
        v = \sum_{a^\pm \in A^\pm} \lambda_{a^\pm} \cdot a^+ \between a^-
    \]
    with $\lambda_{a^\pm} \in \EE$, where moreover $\vsup{v} \cup A^\pm$ is free from conflicts.
\end{claim}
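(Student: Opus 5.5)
We will imitate the final step of the extended example in Section~\ref{ex:extended-example}: since $v$ is conflict-free, a single fresh atom $z$ can be adjoined to produce all needed duos at once, after which we induct on $|I|$. Concretely, fix the largest index $j=\max I$ and write $v=\sum_{a}v^{j:a}$, ranging over the finitely many atoms $a$ occurring at position $j$ in $\vsup{v}$. By Lemma~\ref{lem:balanced-projected-subvector}, each $w_a=v^{j:a}|^{-j}$ is a balanced vector in $\Lin_\EE\cal O^{j:a}|^{-j}$, which is an $Sa$-ordered orbit indexed by $I\setminus\{j\}$. It is still conflict-free as a set of tuples, since its locations are locations of $v$. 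Moreover $\sum_a v^{j:a}|^{-j}=v|^{-j}=0$.

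The key construction is the fresh atom. Let $\cal O'=\cal O|^{-j}$. We use free amalgamation, as in Lemma~\ref{lem:free-fresh}, together with homogeneity to find $z\notin S\cup\sqrt{\vsup v}$ such that $z$ exceeds every atom of $\vsup v$. For every location $(i,c)$ of $v$ with $i\neq j$, we require $z$ to be related to $c$ exactly as $c'_j$ is related to $c'_i$ for $c'\in\cal O$. We also require $z$ to be related to $S$ as $\cal O$ prescribes, and to be unrelated to every atom occurring only at position $j$. This prescription is consistent because $v$ has no equational conflicts, so each atom has a unique index. The enlarged structure still lies in $\Cclass_0$: a forbidden $F\in\Fclass$ containing $z$ would consist of pairwise related atoms. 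Since there are no relational conflicts, such atoms are related exactly as the corresponding coordinates of a tuple in $\cal O$, so $F$ would already embed into a member of $\cal O$ together with $S$. Some bookkeeping is needed to prevent $F$ from also using an atom at position $j$, which is exactly why $z$ is left unrelated to those atoms.

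Then, for each $c\in\vsup v$ with $c_j=a$, the tuples $c$ and $c|^{-j}z$ should differ only at position $j$. The induction hypothesis, applied to $w_a$ over the support $S\cup\{a\}$ and likewise to the images with $z$, expresses each $w_a$ as a sum of cogs $\sum\lambda\, d^+\between d^-$ in $\cal O^{j:a}|^{-j}$, with everything still conflict-free. Each such cog lifts to an $\cal O$-cog. In the lifted cog $(d^+a)\between(d^-z)$, the alternating sum over $J\ni j$ contributes the terms with $z$. Hence
\[ v-\sum_a\sum \lambda\,(d^+a)\between(d^-z)=\sum_a \bigl(w_a z\bigr)\text{-type terms}, \]
and these collapse to $\bigl(\sum_a w_a\bigr)z=\bigl(v|^{-j}\bigr)z=0$. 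This is the telescoping seen in the example.

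The main obstacle is the duo condition. We must check that $d^+a\parallel d^-z$ really is an $\cal O$-duo, and that the union $\vsup v\cup A^\pm$ stays conflict-free. The duo condition requires $R(d^+_i,z)\iff R(d^+_i,a)$ and the analogous conditions for $d^-$. If the inductive duos introduce new atoms, $z$ must also be correctly related to those. To handle this, we would first perform the induction to obtain all the inner duos, and only then choose $z$ relative to the set of all their atoms, using the conflict-freeness delivered by the inductive clause. Where different $a$'s yield duos that clash, we would merge them via Lemma~\ref{lem:!}, taking $V_0$ to be the part already fixed by $v$, so that the combined set stays conflict-free before $z$ is adjoined.
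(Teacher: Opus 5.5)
Your proposal is correct. It is a streamlined variant of the paper's argument: the ingredients are the same, but the merging is organised differently.

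\textbf{How the paper proceeds.} The paper also fixes $j=\max I$, but it then runs a second induction on the number $k$ of distinct atoms at position $j$. It merges only two of them, $a\neq b$, at a time. It decomposes $v^{j:a}|^{-j}$ and $v^{j:b}|^{-j}$ by the outer hypothesis and aligns them with $\vsup{v}$ via Lemma~\ref{lem:!}. The fresh $z$ is specified only against $S\cup\sqrt{A^\pm a\cup B^\pm b}$. Subtracting the lifted cogs $a^+a\between a^-z$ and $b^+b\between b^-z$ replaces two top atoms by one. The paper then needs two further applications of Lemma~\ref{lem:!}, for $A^-z\cup B^-z$ and for the inner decomposition $C^\pm$, before the inner hypothesis finishes.

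\textbf{How you proceed.} You merge all top atoms into a single $z$, which is exactly the strategy of the extended example in Section~\ref{ex:extended-example}. You first align all inner decompositions by repeated use of Lemma~\ref{lem:!}, then specify $z$ against every atom of $\vsup{v}\cup\bigcup_a A_a^\pm a$. The telescoping then closes at once, since $\sum_a v^{j:a}|^{-j}=v|^{-j}=0$. The moreover clause is also immediate, because the only new location, $(j,z)$, was built not to conflict with anything. The paper mentions this alternative in passing. The forbidden-substructure argument is the same in both versions. What you gain is no inner induction and no post hoc conflict repair; what the paper gains is a purely local specification of $z$.

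\textbf{Details to make explicit when writing it out.}
\begin{itemize}
\item $z$ must also copy the unary type of position $j$ in $\mathcal{O}$.
\item $z$ must copy the order position of $c_j$ relative to $S$. This is consistent with $z$ exceeding all present atoms, since every atom of a tuple in $\mathcal{O}$ lies below each $s\in S$ with $c_j<s$.
\item A forbidden $F\ni z$ contains at most one atom per position $i\neq j$. Distinct atoms at the same position in a conflict-free set are unrelated, by irreflexivity. This is what makes $s\mapsto s$, $x_i\mapsto c_i$, $z\mapsto c_j$ an injective embedding.
\end{itemize}
Finally, the phrase ``likewise to the images with $z$'' is unnecessary: the induction hypothesis is applied only to the $w_a$.
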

\begin{claimproof}
We proceed by induction on the dimension $|I|$, 
noting that when $I = \emptyset$ we just have $v = v() \cdot () = v() \cdot ( \between )$.

So suppose $I$ is non-empty; let $j \in I$ be the greatest element. 
Group the terms in $v$ by their greatest atom so that $v = v^1 + v^2 + \cdots + v^k$.
We now induct on $k$.
If $k \leq 1$, we are done: as $v|^{-j} = 0$ we must have $v = 0$ (and $k = 0$), so the empty sum will do.
Otherwise \[
    v = v^{j:a} + v^{j:b} + v'
\]
for some $a\neq b\in \A$. By Lemma~\ref{lem:balanced-projected-subvector}, $v^{j:a}|^{-j}$ is balanced, and it is conflict-free, as every location in it is also a location in $v$.
By the outer inductive hypothesis, we get \[
    v^{j:a} = (v^{j:a}|^{-j}) a = \sum_{a^\pm\in A^\pm} (\lambda_{a^\pm} \cdot a^+ \between a^-)a
\]
where $\vsup{v^{j:a}|^{-j}}\cup A^\pm$ is free from conflicts, which immediately implies that $\vsup{v^{j:a}}\cup A^\pm a$ is free from conflicts as well. 
Note that if $\pi \in \Aut(\A/S)$ fixes every atom in $\sqrt{\vsup{v^{j:a}}}$ (which includes $a$), then
\[
    v^{j:a} = \pi(v^{j:a}) = \sum_{a^\pm \in A^\pm} \lambda_{a^\pm} \cdot \pi a^+ \between \pi a^-,
\]
so by Lemma~\ref{lem:!} (putting $K=A^\pm a$, $V_0=\vsup{v^{j:a}}$, and $V=\vsup{v}$) we may assume without loss of generality that
$\vsup{v}\cup A^\pm a$ is free from conflicts.

Similarly, we can write \[
    v^{j:b} = \sum_{b^\pm\in B^\pm} (\mu_{b^\pm} \cdot b^+ \between b^-)b
\]
and apply Lemma~\ref{lem:!} again (putting $K=B^\pm b$, $V_0=\vsup{v^{j:b}}$, and $V=\vsup{v}\cup A^\pm a$) to assume that
\[
    \vsup{v} \cup A^\pm a \cup B^\pm b
\]
is free from conflicts.

We now invent a new element $z$, on which we impose the following relations with $S \cup \sqrt{A^\pm a \cup B^\pm b} \subseteq \A$: 
\begin{enumerate}
    \item $a, b < z$ (noting that $\sqrt{A^\pm} < a, \sqrt{B^\pm} < b$) and, for each $s \in S$, $z < s \;:\Longleftrightarrow\; a, b < s$;
    
    \item for each unary relation $P \in \sigma_0$:
    \[
        P(z) \;:\Longleftrightarrow\; P(a) \iff P(b);
    \]
    \item for each binary relation $R \in \sigma_0$ and $s \in S$, $a^\pm \in A^\pm$, $b^\pm \in B^\pm$, $i \in I \setminus \{j\}$:
    \begin{itemize}
        \item $R(z, s) \;:\Longleftrightarrow\; R(a, s) \iff R(b, s)$,
        \item $R(z, a^+_i) \;:\Longleftrightarrow\; R(a, a^+_i)$,
        \item $R(z, b^+_i) \;:\Longleftrightarrow\; R(b, b^+_i)$,
        \item $R(z, a^-_i) \;:\Longleftrightarrow\; R(a, a^-_i)$,
        \item $R(z, b^-_i) \;:\Longleftrightarrow\; R(b, b^-_i)$,
        \item $R(z, a)$ and $R(z,b)$ are both false,
        \item and symmetrically for $R(-, z)$.
    \end{itemize}
    These are consistent as there are no equational conflicts.
    (For instance, if $a^+_i = b^-_{i'}$ then $i = i'$, and $R(a, a^+_i) \iff R(b, b^-_i)$ holds since $a^+ a$ and $b^- b$ are both in $\cal O$.)
\end{enumerate}
To see that the $\sigma$-structure $S \cup \sqrt{A^\pm a \cup B^\pm b} \cup \{z\}$ still embeds into $\A$, 
suppose towards a contradiction that it contains a forbidden $\sigma_0$-substructure $F$.
Then $F$ must contain $z$.
Since any two elements in $F$ are necessarily related, we must have $a \not\in F$ and $b \not\in F$.
Similarly, whenever $F$ contains an atom $x_i$ for any $x\in A^\pm\cup B^\pm$, it does not contain a distinct atom $y_i$ for any $y\in A^\pm\cup B^\pm$.
It follows that, fixing any $a^\pm\in A^\pm$,
\[
    s \mapsto s,\quad x_i \mapsto a^+_i,\quad z \mapsto a
\]
defines an injective function $\phi : F \to \A_0$,
which is furthermore an embedding (we only need to check this for pairs!) because $A^\pm \cup B^\pm$ is conflict-free and any $x_i, y_{i'}$ for $i \neq i'$ are related.
This is a contradiction. We may therefore assume that $z \in \A$.

It is now routine to check that $a^+ a \parallel a^- z$ and $b^+ b \parallel b^- z$ are $\cal O$-duos for all $a^\pm \in A^\pm, b^\pm \in B^\pm$,
and that 
\[
    A^+ a \cup A^- z \cup B^+ b \cup B^- z
\]
is free from conflicts.
From Lemma~\ref{lem:!} (putting $K = A^- a \cup B^- z$, $V_0 = A^\pm a \cup B^\pm b$, $V = \vsup{v} \cup A^\pm a \cup B^\pm b$) we may assume that 
\[
\vsup{v} \cup A^+ a \cup A^- z \cup B^+ b \cup B^- z
\] 
is also free from conflicts.
(Alternatively, we could have explicitly ensured this when defining $z$.)
Then the vector:
\begin{align*}
    v'' &= v
    - \sum_{a^\pm\in A^\pm} \lambda_{a^\pm} \cdot a^+ a \between a^- z 
    - \sum_{b^\pm\in B^\pm} \mu_{b^\pm} \cdot b^+ b \between b^- z  \\
    &= v^{j:a}|^{-j} z + v^{j:b}|^{-j} z + v',
\end{align*}
when grouped into subvectors by the largest atom in each term, has at least one fewer component than $v$.
By the inner inductive hypothesis, we may write
\[
    v'' = \sum_{c^\pm\in C^\pm} \kappa_{c^\pm} \cdot c^+ \between c^-
\]
with $\vsup{v''} \cup C^\pm$ conflict-free, and one
last application of Lemma~\ref{lem:!} (putting $K = C^\pm$, $V_0 = \vsup{v''}$, and $V = \vsup{v} \cup A^+ a \cup A^- z \cup B^+ b \cup B^- z$) allows us to conclude that
\[
    \vsup{v} \cup A^+ a \cup A^- z \cup B^+ b \cup B^- z \cup C^\pm
\]
is conflict-free as well.
We conclude that
\begin{align*}
    v = 
      \sum_{a^\pm\in A^\pm} \lambda_{a^\pm} \cdot a^+ a \between a^- z 
    + \sum_{b^\pm\in B^\pm} \mu_{b^\pm} \cdot b^+ b \between b^- z 
    + \sum_{c^\pm\in C^\pm} \kappa_{c^\pm} \cdot c^+ \between c^-
\end{align*}
is a decomposition of $v$ into a sum of $\cal O$-cogs as required.
\end{claimproof}

\subsection{Vectors without equational conflicts}\label{subsec:unambiguous}
\begin{claim}\label{claim:?-free-decomposition}
     If $v \in \Ker_\EE \cal O$ has no equational conflicts, then it can be written as a sum of $\cal O$-cogs:
    \[
        v = \sum_{a^\pm \in A^\pm} \lambda_{a^\pm} \cdot a^+ \between a^-
    \]
    with $\lambda_{a^\pm} \in \EE$, where moreover $\vsup{v} \cup A^\pm$ has no equational conflicts.
\end{claim}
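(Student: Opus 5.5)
The plan is to reduce to Claim~\ref{claim:!-free-decomposition} by first removing relational conflicts with a few explicit cogs, exactly as in the extended example of Section~\ref{ex:extended-example}. There, $bf \between b'i$ and $gh \between g'i$ were subtracted to move one endpoint of a conflicting location to a fresh atom. We argue by induction on the number of relationally conflicting locations in $v$. The base case, when there are none, is Claim~\ref{claim:!-free-decomposition} itself: since $v$ has no equational conflicts, freedom from relational conflicts makes $v$ conflict-free.

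\textbf{Inductive step.} Suppose $(i,a)$ and $(j,b)$ are in a relational, non-equational conflict in $\vsup{v}$. For every $c\in\vsup{v}$ with $c_i=a$, we want to replace $c$ by a tuple $c'$ which agrees with $c$ except at position $i$, where $c'_i=a'$ for a fresh atom $a'$. The atom $a'$ is chosen by Lemma~\ref{lem:free-fresh} with $z=a$, $X=S\cup\sqrt{\vsup v}\setminus(Y\cup\{a\})$, and $Y$ the set of atoms that conflict with $(i,a)$.

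To realise this replacement with cogs, we want $v - v^{i:a} + \pi(v^{i:a})$, where $\pi$ is the automorphism $a\mapsto a'$ fixing $X$. The difference $v^{i:a}-\pi(v^{i:a})$ should then be written as a sum of cogs. Rather than doing this directly, I would do it one tuple at a time. For each $c$ with $c_i=a$, extend $c$ to a duo $c\parallel d$ via Lemma~\ref{lem:cog-fresh-full}, choosing $d$ fresh with respect to everything. The cog $c\between d$ then equals $c - c' + (\text{terms involving fresh atoms } d_k)$, provided $d_i$ is used as $a'$.

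\textbf{Bookkeeping.} This naive approach introduces many new terms containing the fresh atoms $d_k$. The cleaner route is to use the balanced projected subvector $v^{i:a}|^{-i}$, which is balanced by Lemma~\ref{lem:balanced-projected-subvector} and lives in lower dimension. Induction on $|I|$ then expresses it as a sum of cogs, which are lifted by reinserting $a$, respectively $a'$, at position $i$. The resulting vector $(v^{i:a}|^{-i})$ with $a$ at position $i$, minus the same with $a'$, is a difference of two lifted cog-sums. Each pair of lifted cogs combines into cogs of $\cal O$ of the form $x^+a\between x^-a'$, mirroring the construction of $z$ in Claim~\ref{claim:!-free-decomposition}. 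By Lemma~\ref{lem:!} and Lemma~\ref{lem:?}, the new cogs can be moved so that they create no equational conflicts with $\vsup v$. Moreover, $a'$ is unrelated to $Y$, so the conflict at $(i,a)$ disappears without creating new ones. The new vector $v-v^{i:a}+\pi(v^{i:a})$ is still balanced and has fewer relationally conflicting locations, so the induction hypothesis applies. A final application of Lemma~\ref{lem:?} secures the side condition that $\vsup v\cup A^\pm$ has no equational conflicts.

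\textbf{Main obstacle.} I expect the hardest step to be verifying that the lifted pieces $x^+a\parallel x^-a'$ are genuine $\cal O$-duos. This requires $a'$ to relate to $x^\pm$ exactly as $a$ does to $x^+$, which must be reconciled with $a'$ being unrelated to the conflicting atoms $Y$. It must also be checked that the resulting substructure avoids forbidden configurations. Here I would reuse the forbidden-substructure embedding argument from the construction of $z$: map $a'\mapsto a$ and $x_k\mapsto x^+_k$, and use freeness of amalgamation.
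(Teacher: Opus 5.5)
Your outline is the paper's skeleton: an outer induction on $|I|$ applied to the balanced, equationally conflict-free vector $v^{i:a}|^{-i}$, an inner induction on relationally conflicting locations with Claim~\ref{claim:!-free-decomposition} as base case, and subtraction of lifted cogs $x^+a\between x^-a'$ to move the location $(i,a)$ to a fresh atom $a'$. But the step you single out as the main obstacle is left open, and your proposed fix does not close it.

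First, the order of choices is wrong. You fix $a'$ via Lemma~\ref{lem:free-fresh} with $X=S\cup\sqrt{\vsup v}\setminus(Y\cup\{a\})$ before the decomposition $v^{i:a}|^{-i}=\sum_{x^\pm\in A^\pm}\lambda_{x^\pm}\cdot x^+\between x^-$ exists. So the automorphism need not fix the atoms of $A^\pm$, and $a'$ may relate to some $x^\pm_k$ differently from $a$; then $x^+a\parallel x^-a'$ is not an $\mathcal O$-duo. You must instead:
\begin{itemize}
\item decompose first;
\item use the ``moreover'' clause of the outer hypothesis ($\vsup{v^{i:a}|^{-i}}\cup A^\pm$ has no equational conflicts) to apply Lemma~\ref{lem:?} with $K=A^\pm a$, $V_0=\vsup{v^{i:a}}$, $V=\vsup v$;
\item only then choose $a'$, with $X=S\cup\sqrt{\vsup v\cup A^\pm}\setminus(Y\cup\{a\})$.
\end{itemize}
Lemma~\ref{lem:!} is not usable here: it needs $\vsup v$ to be free of relational conflicts, which is exactly what fails.

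Second, and this is the missing idea: you need this $X$ to actually contain $\sqrt{A^\pm}$, i.e.\ $Y\cap\sqrt{A^\pm a}=\emptyset$. Otherwise your two demands on $a'$ contradict each other as soon as some $x^\pm_k$ lies in $Y$. The duo needs $a'$ to relate to each $x^\pm_k$ as $a$ does, while removing the conflict needs $a'$ unrelated to every atom of $Y$. No forbidden-substructure argument can reconcile these: that argument only certifies that a \emph{consistent} prescription stays in $\Cclass_0$, which Lemma~\ref{lem:free-fresh} already packages.

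The disjointness follows from $\vsup v\cup A^\pm a$ having no equational conflicts. Suppose $(j,b)$ conflicts with $(i,a)$ and $b=x^\pm_k$. Then $j=k$, so $(k,x^\pm_k)$ and $(i,a)$ are locations of the single tuple $x^\pm a\in\mathcal O$. Hence $a$ and $b$ are related exactly as $\mathcal O$ prescribes, so there is no conflict after all. Likewise $b=a$ would force $j=i$.

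With this in place, Lemma~\ref{lem:cog-fresh-single}, applied to the $Sa$-ordered orbit $\mathcal O^{i:a}|^{-i}$ with $z=a$, immediately gives that every $x^+a\parallel x^-a'$ is an $\mathcal O$-duo. No hand-built amalgam is needed. The rest of your outline then goes through: $v-\sum\lambda_{x^\pm}\cdot x^+a\between x^-a'=v-v^{i:a}+(v^{i:a}|^{-i})a'$ has fewer conflicting locations, the inner induction applies, and a final Lemma~\ref{lem:?} secures the side condition.
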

\begin{claimproof}
We proceed again by induction, first on $|I|$ then on the number of relational conflicts in $v$.
The outer base case $I = \emptyset$ is trivial~--~we have $v = v() \cdot ( \between )$, and no conflicts arise~--~and the inner base case is just Claim~\ref{claim:!-free-decomposition}.

Suppose that a location $(i, a)$ is part of a relational conflict in $v$.
Since every location in $v^{i:a}|^{-i}$ is also a location in $v$, we know that $v^{i:a}|^{-i}$ has no equational conflicts, and by Lemma~\ref{lem:balanced-projected-subvector} it is balanced.
By the outer inductive hypothesis, we get:
\[
    v^{i:a} = (v^{i:a}|^{-i}) a = \sum_{a^\pm\in A^\pm} (\lambda_{a^\pm} \cdot a^+ \between a^-) a
\]
where $\vsup{v^{i:a}|^{-i}}\cup A^\pm$ has no equational conflicts, 
which immediately implies that $\vsup{v^{i:a}}\cup A^\pm a$ is free from equational conflicts as well. 
By Lemma~\ref{lem:?} (putting $K=A^\pm a$, $V_0=\vsup{v^{i:a}}$, and $V=\vsup{v}$) 
we may assume that $\vsup{v}\cup A^\pm a$ has no equational conflicts.

Take any location $(j,b)$ that is in a relational conflict with $(i,a)$ in $v$. 
Then $b\not\in\sqrt{A^\pm a}$. 
To see this, note that $b=a$ would imply $j=i$ (since $v$ has no equational conflicts), which is not a conflict. 
On the other hand, if $b = a^+_k$ for some $a^+ \in A^\pm$ and $k \in I \setminus \{i\}$ (the case of $a^-$ is identical) 
then $j=k$, but this is not a conflict either, since $a^+a\in\cal O$ and $a=(a^+a)_i$.
Also, $b \not\in S$.

Let $Y$ consist of all the atoms $b$ that are in a relational conflict with $(i,a)$ in $v$. 
As $a \not\in S$ and $a \not\in \sqrt{A^\pm}$, we have shown that
\[
    X = S \cup \sqrt{\vsup{v} \cup A^\pm} \setminus (Y \cup \{a\})   
\]
contains $S \cup \sqrt{A^\pm}$ and that $X, Y, \{a\}$ are pairwise disjoint.
Use Lemma~\ref{lem:free-fresh} (putting $z=a$) to find $\tau \in \Aut(\A/X)$ such that $\tau(a) \not\in X \cup Y \cup \{a\}$, is greater than $a$, and is not related to any of $Y \cup \{a\}$. 
Write $a'=\tau(a)$.
Then, for any $a^\pm \in A^\pm$, it follows by Lemma~\ref{lem:cog-fresh-single} that $a^+ a \parallel a^- a'$ is an $\cal O$-duo. 
Moreover, $\vsup{v} \cup A^+ a \cup A^- a'$ has no equational conflicts, and the vector
\begin{align*}
    v' = v - \sum_{a^\pm\in A^\pm} \lambda_{a^\pm} \cdot a^+ a \between a^- a'
    &= v - v^{i:a} + (v^{i:a}|^{-i}) a' 
\end{align*}
has strictly fewer relationally conflicting locations than $v$, as the location $(i,a)$ disappears from it.
The inner inductive hypothesis tells us that we may write
\[
    v' = \sum_{b^\pm\in B^\pm} \mu_{b^\pm} \cdot b^+ \between b^-
\]
with $\vsup{v'} \cup B^\pm$ free from equational conflicts.

Since any location in $\vsup{v'}$ is a location in $\vsup{v} \cup A^+ a \cup A^- a'$, Lemma~\ref{lem:?} (putting $K = B^\pm$, $V_0 = \vsup{v'}$, and $V = \vsup{v} \cup A^+a \cup A^- a'$) allows us to assume that 
\[
    \vsup{v} \cup A^+ a \cup A^-a' \cup B^\pm
\]
is also free from equational conflicts. We conclude that 
\[
    v =\sum_{a^\pm\in A^\pm} \lambda_{a^\pm} \cdot a^+ a \between a^-a'+ \sum_{b^\pm\in B^\pm} \mu_{b^\pm} \cdot b^+ \between b^-
\]
as required.
\end{claimproof}

\subsection{Arbitrary vectors}
We are in a position to restate and prove Theorem~\ref{thm:cog-span-generally}:
\begin{theorem}
    Any $v \in \Ker_\EE \cal O$ can be written as
    \[
        v = \sum_{a^\pm \in A^\pm} \lambda_{a^\pm} \cdot a^+ \between a^-
    \]
    with $\lambda_{a^\pm} \in \EE$.
\end{theorem}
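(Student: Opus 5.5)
The plan mirrors the proof of Claim~\ref{claim:?-free-decomposition}. That claim removed relational conflicts one location at a time, using the case without equational conflicts as its base. Here I would remove \emph{equational} conflicts one location at a time and use Claim~\ref{claim:?-free-decomposition} as the base case. The argument is a double induction: the outer induction is on $|I|$, with the trivial base $v = v()\cdot(\between)$ when $I=\emptyset$. The inner induction is on the number of locations of $v$ involved in an equational conflict. When that number is zero, Claim~\ref{claim:?-free-decomposition} applies directly.

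For the inductive step, I would pick a location $(i,a)$ of $v$ that is in an equational conflict with some $(j,a)$, $j\neq i$. By Lemma~\ref{lem:balanced-projected-subvector}, the projected subvector $v^{i:a}|^{-i}$ is balanced in $\Lin_\EE \cal O^{i:a}|^{-i}$, which is an $Sa$-ordered orbit of smaller dimension. The outer inductive hypothesis then gives
\[
v^{i:a} = \sum_{a^\pm\in A^\pm}(\lambda_{a^\pm}\cdot a^+\between a^-)a .
\]
No conflict-freeness side condition is needed in the final statement, so nothing must be tracked here.

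Next I would set $X = S\cup\sqrt{\vsup{v}\cup A^\pm}\setminus\{a\}$ and apply Lemma~\ref{lem:free-fresh} with $z=a$ and $Y=\emptyset$. This yields $\tau\in\Aut(\A/X)$ with $a'=\tau(a)>a$, $a'$ fresh, and $a'$ unrelated to $a$. Since $\tau$ fixes $\sqrt{A^\pm}\cup S$, Lemma~\ref{lem:cog-fresh-single} (with $S$ replaced by $Sa$, and $X$ chosen to contain every $a^\pm$) shows that each $a^+a\parallel a^-a'$ is an $\cal O$-duo. I would then form
\[
v' = v-\sum_{a^\pm\in A^\pm}\lambda_{a^\pm}\cdot a^+a\between a^-a' = v - v^{i:a} + (v^{i:a}|^{-i})a' .
\]
This vector is balanced, because it differs from $v$ by cogs and Claim~\ref{claim:cogs-are-balanced} applies. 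In $v'$ every tuple carrying $a$ at position $i$ has been moved to $a'$, and $a'$ is fresh.

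The main obstacle is showing that the count of equationally conflicting locations strictly drops. The location $(i,a)$ disappears, since no other index $i$ carries $a$ in $v'$. The new locations are of the form $(k,a')$ with $k=i$ only, together with locations inherited from $v^{i:a}|^{-i}$, which already occurred in $v$. Because $a'$ is fresh it conflicts with nothing, so no new conflicts arise. One subtlety is that $(j,a)$ may still be present in $v'$; it is no longer in conflict with $(i,a)$, but it may conflict with some other $(i'',a)$. This is harmless, because the measure counts locations and $(i,a)$ itself has been removed. The inner inductive hypothesis then writes $v'$ as a combination of cogs with coefficients in $\EE$, and adding back the $A^\pm$ cogs expresses $v$ in the required form. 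Here the coefficients stay in $\EE$, since they came from the decomposition of $v^{i:a}|^{-i}$.
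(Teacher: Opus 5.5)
Your proposal is correct and follows the paper's proof essentially step for step. It uses a double induction on $|I|$ and on the number of equationally conflicting locations, with Claim~\ref{claim:?-free-decomposition} as the inner base case, and it relocates $(i,a)$ to a fresh $a'=\tau(a)$ obtained from Lemma~\ref{lem:free-fresh} with $Y=\emptyset$, so that Lemma~\ref{lem:cog-fresh-single} yields the duos $a^+a\parallel a^-a'$. You also make explicit two points the paper leaves tacit: that $v^{i:a}|^{-i}$ is balanced by Lemma~\ref{lem:balanced-projected-subvector}, so the outer hypothesis applies, and that $v'$ stays balanced, so the inner hypothesis applies.
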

\begin{proof}
This is similar to the proof of Claim~\ref{claim:?-free-decomposition}, but simpler.
We proceed again by induction, first on $|I|$ then on the number of equational conflicts in $v$.
The outer base case $I = \emptyset$ is trivial as before,  
and the inner base case is Claim~\ref{claim:?-free-decomposition}.

Suppose that a location $(i, a)$ is part of an equational conflict in $v$. 
By the outer inductive hypothesis, we get:
\[
    v^{i:a}
    = (v^{i:a}|^{-i}) a 
    = \sum_{a^\pm\in A^\pm} (\lambda_{a^\pm} \cdot a^+ \between a^-) a.
\]
Then neither $S$ nor $\sqrt{A^\pm}$ contains $a$,
so 
\[
    X = S \cup \sqrt{\vsup{v} \cup A^\pm} \setminus \{a\}
\]
contains $S \cup \sqrt{A^\pm}$.
Using Lemma~\ref{lem:free-fresh} (putting $z=a$ and $Y=\emptyset$), 
find $\pi \in \Aut(\A/X)$ such that $\pi(a)$ is not in $X$, is greater than $a$, and is otherwise unrelated to $a$. 
Write $a'=\pi(a)$. 
Then, for any $a^\pm \in A^\pm$, it follows by Lemma~\ref{lem:cog-fresh-single} that $a^+ a \parallel a^- a'$ is an $\cal O$-duo. 
Moreover, the vector
\begin{align*}
    v' = v - \sum_{a^\pm\in A^\pm} \lambda_{a^\pm} \cdot a^+ a \between a^- a'
    &= v -  v^{i:a} + (v^{i:a}|^{-i}) a' 
\end{align*}
has strictly fewer equationally conflicting locations than $v$, as the location $(i,a)$ disappears from it.
It follows from the inner inductive hypothesis that we can write
\[
    v' = \sum_{b^\pm \in B^\pm} \mu_{b^\pm} \cdot b^+ \between b^-,
\]
which gives a decomposition $v = v' + \sum_{a^\pm \in A^\pm} \lambda_{a^\pm} \cdot a^+ a \between a^- a'$ as required.
\end{proof}

%% file: src/appendix-coefficients.tex
\section{Other proofs for Section~\ref{sec:equivariant-subspaces}}\label{sec:appendix-eqsubsp}

\subsection{Proof of Theorem~\ref{thm:coeff-approximation}}
For notational simplicity, we will prove the result for a single equivariant ordered orbit $\cal O \subseteq \A^I$.
The general multi-orbit case is very similar, because we will be projecting onto a single orbit anyway.

Consider the $2^{|I|}$ projected $S$-ordered orbits $\mathcal{O}|^{J}$ for $J \subseteq I$.
Suppose that
\[
    f : \mathcal{O}|^J \to \mathcal{O}|^{J'}
\]
is an $\Aut(\A/S)$-equivariant bijection.
Take any $a \in \mathcal{O}|^J$,
and enumerate its entries as $a_1 < \dots < a_{|J|}$.
Similarly, enumerate the entries of $f(a)$ as $b_1 < \dots < b_{|J'|}$.
Then $\{a_1, \dots, a_{|J|}\} = \{ b_1, \dots, b_{|J'|} \}$ because $\A$ has no algebraicity \cite[Thm.~7.1.8]{hodges1993model}; 
since the orbits are ordered, we must have $|J| = |J'|$ and $a_1 = b_1, \dots, a_{|J|} = b_{|J'|}$.
That is, $f$ must be the obvious function that reindexes a $J$-tuple to a $J'$-tuple~--~hence we will write $a^{/J'}$ instead of $f(a)$, leaving $f$ implicit.

Now, let $\mathcal{Q}_{1} = \mathcal{O}|^{J_1}, \dots, \mathcal{Q}_{t} = \mathcal{O}|^{J_t}$ be distinct projected $S$-ordered orbits up to $\Aut(\A/S)$-equivariant bijections,
enumerated in such a way that $|J_1| \geq |J_2| \geq \dots \geq |J_t|$. (In particular, $J_1=I$ and $J_t=\emptyset$.)

\begin{definition}
    For $i = 1, \dots, t$, let $\Jclass_i$ consist of all sets $J$ such that $\mathcal{O}|^J$ is in an $\Aut(\A/S)$-equivariant bijection with $\mathcal{Q}_i$.
    Assemble all $|\Jclass_i|$ projections into a single map
    \[
        (-){\restriction_i} : \Lin_\FF\mathcal{O} \to \Lin_{\FF^{\Jclass_i}} \mathcal{Q}_i.
    \]
    To be more precise $v{\restriction_i}(a)$ is, for $a\in{\cal Q}_i$, the $\Jclass_i$-tuple whose $J$-th entry is $v|^J(a^{/J}) \in \FF$.
\end{definition}
    It is straightforward to check that each $(-){\restriction_i}$ is $\Aut(\A/S)$-equivariant and linear. 
    
    With these maps defined, let us recall the spaces described in Theorem~\ref{thm:coeff-approximation}:
\begin{definition}
    Given an equivariant subspace $W \subseteq \Lin_\field \mathcal{O}$, 
    we write
    \[
        \widehat W 
        = \{ v \in \Lin_\field \mathcal{O} 
        \mid\ \forall i : 
            \underbrace{
                \forall a \in \mathcal{Q}_i, 
                \exists a' \in \mathcal{Q}_i, 
                \exists w \in W \text{ such that } 
                    v{\restriction_i}(a) = w{\restriction_i}(a')
            }_{\text{i.e.~$v{\restriction_i}(\mathcal{Q}_i) \subseteq W{\restriction_i}(\mathcal{Q}_i)$}} 
        \ \}.
    \]
\end{definition}
    Then $\widehat{W}$ is another equivariant subspace of $\Lin_\field \mathcal{O}$, 
    which can be seen as an approximation of $W$ by only looking at coefficients under $\restriction_1, \dots, \restriction_t$;
    in particular $W \subseteq \widehat W$.
    But it turns out that $\widehat{W}$ coincides with $W$:
\begin{lemma}\label{lem:coeff-approximation}
    For $i = 1, \dots, t, t+1$,
    \begin{align*}
        &\widehat W \cap \ker(\restriction_i) \cap \ker(\restriction_{i+1}) \cap \cdots \cap \ker(\restriction_t) \\
        \subseteq{} &W \cap \ker(\restriction_i) \cap \ker(\restriction_{i+1}) \cap \cdots \cap \ker(\restriction_t).
    \end{align*}
    This gives Theorem~\ref{thm:coeff-approximation} when $i = t+1$.
\end{lemma}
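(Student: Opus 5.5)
We prove the inclusion by downward induction on $i$, starting from $i=1$ being the hardest? No: we induct upwards in $i$ from $i = t+1$ downwards to $i=1$ is the wrong direction. The intended order is: the case $i=1$ says $\widehat W \cap \ker\restriction_1\cap\cdots\cap\ker\restriction_t\subseteq W$; since $\restriction_1$ contains the identity projection $J_1 = I$, this intersection is $\{0\}$ and the claim is trivial. We then prove the statement for $i+1$ assuming it for $i$, up to $i+1=t+1$. So fix $v \in \widehat W \cap \ker(\restriction_{i+1}) \cap \cdots \cap \ker(\restriction_t)$; we aim to find $w \in W$ such that $v - w$ additionally lies in $\ker(\restriction_i)$ and still in $\ker(\restriction_{i+1})\cap\cdots\cap\ker(\restriction_t)$. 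Since $v-w \in \widehat W$ (as $W\subseteq \widehat W$ and $\widehat W$ is a subspace), the inductive hypothesis then gives $v - w \in W$, hence $v \in W$.

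To construct $w$, look at $u = v{\restriction_i} \in \Lin_{\FF^{\Jclass_i}} \mathcal{Q}_i$. Its values lie in $\EE = W{\restriction_i}(\mathcal{Q}_i)$, a subspace of $\FF^{\Jclass_i}$, so $u \in \Lin_\EE \mathcal{Q}_i$. The key observation is that $u$ is balanced: any projection $u|^{-k}$ is expressed through projections $v|^{J'}$ with $|J'| < |J|$ for $J\in\Jclass_i$, and since orbits are enumerated with non-increasing $|J|$, such $\mathcal{O}|^{J'}$ fall into some $\Jclass_{i'}$ with $i' > i$, where $v$ vanishes by assumption. Hence $u \in \Ker_\EE \mathcal{Q}_i$, and Theorem~\ref{thm:cog-span-generally} (this is exactly why it was stated for subspaces $\EE\subseteq\FF^n$) writes $u = \sum \lambda_{a\parallel b}\cdot a\between b$ with $\lambda \in \EE$. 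Each $\lambda\in\EE$ is a finite sum of values $w'{\restriction_i}(a')$ with $w'\in W$; using equivariance we move $a'$ to $a$, so each $\lambda\cdot a$ is the $a$-coefficient of $w'{\restriction_i}$ for some $w'\in W$.

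Now, following the proof of Theorem~\ref{thm:cogs-arise-everywhere}, apply the difference operators $\prod_k(1-\pi_k)$ from Remark~\ref{rem:duo} (chosen with fresh enough $b$, via Lemma~\ref{lem:cog-fresh-full} over a larger support) to such $w'$, obtaining $w''\in W$ with $w''{\restriction_i} = \lambda\cdot (a\between b)$ and with $w''{\restriction_{i'}}=0$ for $i'>i$: the latter because cogs are balanced (Claim~\ref{claim:cogs-are-balanced}) and the difference operators kill all lower projections. Summing gives $w\in W$ with $w{\restriction_i}=u$ and $w{\restriction_{i'}}=0$ for $i'>i$, as required. The main obstacle is this last step: making sure the difference-operator trick produces exactly the cog on the $\mathcal{Q}_i$-level simultaneously for all $J\in\Jclass_i$ (the different $J$'s share the same reindexed tuples, so the automorphisms $\pi_k$ act compatibly), and that it annihilates all smaller projections; I would verify this by choosing $b$ fresh relative to all atoms of $w'$, so that the only surviving terms on each $\mathcal{O}|^J$ are those matching $a$.
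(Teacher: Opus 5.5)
Your proposal is correct and follows the paper's proof step for step. You induct from the trivial case $i=1$, show that $v{\restriction_i}$ is balanced with coefficients in $W{\restriction_i}(\mathcal{Q}_i)$, decompose it into cogs by Theorem~\ref{thm:cog-span-generally}, and realise each cog as $w{\restriction_i}$ for $w=\prod_j(\mathrm{id}-\pi_j)w_0$, with the $\pi_j$ fixing every atom of $w_0$ outside $a$. As a result only the reindexings of $a$ survive at level $i$ and nothing survives at levels $i'>i$; it is this freshness, not the balancedness of cogs, that kills those levels.

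The one step you leave implicit is passing from the fresh duo you construct to the duo that actually appears in the decomposition. The paper handles this by noting that all $\mathcal{Q}_i$-duos form a single $\Aut(\A/S)$-orbit, and that the image of $W\cap\ker(\restriction_{i+1})\cap\cdots\cap\ker(\restriction_t)$ under $\restriction_i$ is equivariant.
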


\begin{proof}
We proceed by induction on $i$, working with fewer and fewer zeroness assumptions as we go.
The base case $i = 1$ is trivial, since $\ker(\restriction_1) = \{0\}$. 
Indeed, $\Jclass_1=\{I\}$, so $v{\restriction_1}$ is the identity map.

To prove the containment for any given $i>1$, 
we allow $v {\restriction_i}$ to be non-zero.
But $v {\restriction_i}$ satisfies the next best property:
\begin{claim}
    The image of \[
        \widehat W \cap \ker(\restriction_{i+1}) \cap \cdots \cap \ker(\restriction_t)
    \] 
    under $\restriction_{i}$ is contained in $\Ker_{W {\restriction}_{i} (\mathcal{Q}_{i})} \mathcal{Q}_{i}$. 
\end{claim}
\begin{claimproof}
    Take any $v \in \widehat W$ such that  $v {\restriction_{i'}} = 0$ for all $i'>i$.
    That $v {\restriction_{i}} \in \Lin_{W{\restriction_{i}}(\mathcal{Q}_{i})} \mathcal{Q}_{i}$ is clear from the definition of $\widehat W$.
    Recall that $\mathcal{Q}_{i}$ = $\mathcal{O}|^{J_{i}}$.
    For every $j \in J_{i}$,
    we need to prove that $v {\restriction_{i}} |^{-j} = 0$. 
    Unravelling the definitions, given any $a\in\mathcal{O}|^{J_{i}\setminus\{j\}}$, we need the $J$-th entry of $v {\restriction_{i}} |^{-j}(a)$ to be $0$, for every $J\in \Jclass_i$.
    
    So take any $J \in \Jclass_{i}$.
    The unique ordered bijection between $J_{i}$ and $J$ restricts to one between $J_{i} \setminus \{j\}$ and $J \setminus \{j'\}$, for some $j'\in J$. 
    Write $J'=J \setminus \{j'\}$.
    Then $J'$ belongs to some $\Jclass_{i'}$ with $i' > i $,
    so $v {\restriction_{i'}} = 0$. Now calculate (with $a\in\mathcal{O}|^{J_{i}\setminus\{j\}}$, $b\in{\cal Q}_i$, $c\in {\cal O}|^J$ and $d\in\cal O$):
    \newtagform{noparen}{}{}
    \usetagform{noparen}    
\begin{align*}
	(v{\restriction_i}|^{-j}(a))_J &= \sum_{b|^{-j}=a}(v{\restriction_i}(b))_J
	= \sum_{b|^{-j}=a}v|^J(b^{/J}) 
	= \sum_{c|^{-j'}=a^{/J'}}v|^J(c) \\
	&= \sum_{d|^{J'}=a^{/J'}}v(d)
	\,\, = v|^{J'}(a^{/J'}) = (v{\restriction_{i'}}(a))_{J'} = 0.
    \tag{\claimqedhere} 
\end{align*}    
\end{claimproof}

In light of Remark~\ref{rem:FF-EE}, from Theorem~\ref{thm:cog-span-generally} we get
$\Ker_{W {\restriction_{i}} (\mathcal{Q}_{i})} \mathcal{Q}_{i} \subseteq \Cog_{W {\restriction_{i}} (\mathcal{Q}_{i})} \mathcal{Q}_{i}$.
Furthermore:
\begin{claim}\label{claim:cogs-arise-everywhere}
    $\Cog_{W {\restriction}_{i} (\mathcal{Q}_{i})} \mathcal{Q}_{i}$ is contained in
    the image of \[
        W \cap \ker(\restriction_{i+1}) \cap \cdots \cap \ker(\restriction_t)
    \] under $\restriction_{i}$. 
\end{claim}
\begin{claimproof}
    Consider any ${\cal Q}_i$-cog with a coefficient of $\lambda\in W {\restriction_{i}} (\mathcal{Q}_{i})$, and let $w\in W$ and $a\in{\cal Q}_i$ be such that $\lambda = w{\restriction_{i}}(a)$.
    Let $S'$ consist of $S$ together with every atom appearing in $w$ but not in $a$.
    We generalise the construction used in the proof of Theorem~\ref{thm:cogs-arise-everywhere}.
    
    Apply Lemma~\ref{lem:cog-fresh-full} and Remark~\ref{rem:duo} to get automorphisms $\pi_j$ for $j \in J_i$ 
    such that $a \parallel \prod_{j \in J_i} \pi_j a$ is an $\mathcal{Q}_i$-duo, 
    where each $\pi_j$ fixes $S'$ and all $a_{j'}, \pi_{j'}(a_{j'})$ for $j'\neq j$. 
    Since all $\mathcal{Q}_i$-duos are in the same orbit, it is enough to show that 
    the cog corresponding to this particular duo, with the coefficient $\lambda$, 
    belongs to the $\restriction_i$-image as in the statement of the claim.
    Put: \[
        w' = \prod_{j \in J_{i}} (\mathrm{id} - \pi_j) w \in W.
    \]
    For any $1 \leq i' \leq t$, noting that no more atoms can appear in $w{\restriction_{i'}}$ than in $w$, we have
    \begin{align*}
        w' {\restriction_{i'}}
        = \prod_{j \in J_{i}} (\mathrm{id} - \pi_j) w {\restriction_{i'}} 
        = \sum_{c \in C_{i'}} \sum_{J' \subseteq J_i} (-1)^{|J'|}  
            w {\restriction_{i'}}(c) \cdot \left(\prod_{j \in J'} \pi_j c\right)
    \end{align*}
    where \[
        C_{i'} = \{ c\in{\cal Q}_{i'} \mid\ 
            \{c_j : j\in J_{i'}\} \supseteq \{a_j : j\in J_i\} 
        \ \}.
    \]
    (The formula used in the proof of Theorem~\ref{thm:cogs-arise-everywhere} is a special case of this for $i'=1$ so that $J_{i'}=I$ and ${\cal Q}_{i'}={\cal O}$.) 
    Now, if $i'>i$ then $C_{i'}$ is empty, and so $w' {\restriction_{i+1}} = \cdots = w' {\restriction_{t}} = 0$. Moreover, if $i'=i$ then $C_i = \{a\}$ and we obtain the cog from before:
    \[
        w' {\restriction_{i}} = \lambda \cdot \left(a \between \prod_{j \in J_{i}} \pi_j a\right).
    \]
So $w'$ is a witness for the inclusion from the claim.
\end{claimproof}

This is enough to establish Lemma~\ref{lem:coeff-approximation} for $i+1$, assuming it holds for $i$.
Indeed, given $v \in \widehat W \cap \ker(\restriction_{i+1}) \cap \dots \cap \ker(\restriction_t)$, by the preceding claims
we can find \[
    w \in W \cap \ker(\restriction_{i+1}) \cap \dots \cap \ker(\restriction_t)
\] such that 
$v {\restriction_i} = w {\restriction_i}$.
But then $(v - w) {\restriction_{i}} = 0$, so $v - w$ lies in $\ker(\restriction_{i})$, $\ker(\restriction_{i+1}), \dots, \ker(\restriction_t)$ as well as $\widehat{W}$ (which contains $W$).
It follows from the inductive hypothesis that \[
    v - w \in W \cap \ker(\restriction_i) \cap \ker(\restriction_{i+1}) \cap \dots \cap \ker(\restriction_t),
\]
so $v = (v - w) + w$ is in $W \cap \ker(\restriction_{i+1}) \cap \dots \cap \ker(\restriction_t)$ as well.
This completes the proof of Theorem~\ref{thm:coeff-approximation}.
\end{proof}

\subsection{Proof of Corollary~\ref{cor:2d-length}}
Now we specialise (as already done above) to the case of a single equivariant ordered orbit $\cal O \subseteq \A^d$.
The maps $\{\restriction_i : \Lin_\FF \cal O \to \Lin_{\FF^{\Jclass_i}} \cal Q_i\}_{1 \leq i \leq t}$ are as before.
We first bound the length of $\Lin_\field(\cal O)$ from above:

\begin{corollary}\label{cor:length-upper-bound}
    Let $W_0 \subset W_1 \subset \cdots \subset W_l$ be a chain of $\Aut(\A/S)$-equivariant subspaces of $\Lin_\FF(\mathcal{O})$.
    Then $l \leq 2^{|I|}$. 
\end{corollary}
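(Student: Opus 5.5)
The plan is to read off an upper bound from the layered description of equivariant subspaces supplied by Lemma~\ref{lem:coeff-approximation} (equivalently, Theorem~\ref{thm:coeff-approximation}). Every $\Aut(\A/S)$-equivariant $W \subseteq \Lin_\FF \cal O$ is determined by the tuple of finite-dimensional subspaces $\EE_i(W) = W{\restriction_i}(\mathcal{Q}_i) \subseteq \FF^{\Jclass_i}$, for $i = 1,\dots,t$. The bound would then come from counting how many strict inclusions each layer can contribute.

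More precisely, I will filter each $W$ by
\[
    W^{(i)} = W \cap \ker(\restriction_{i+1}) \cap \cdots \cap \ker(\restriction_t),
\]
so that $W^{(t)} = W$ and $W^{(0)} = W\cap\ker(\restriction_1)\cap\cdots = 0$. The proof of Lemma~\ref{lem:coeff-approximation} shows that $W^{(i)}/W^{(i-1)}$ embeds via $\restriction_i$ into $\Ker_{\EE_i(W)} \mathcal{Q}_i$. Moreover, the two claims there show that this image is \emph{exactly} $\Cog_{\EE_i(W)}\mathcal{Q}_i = \Ker_{\EE_i(W)}\mathcal{Q}_i$. The image is therefore determined by the subspace $\EE'_i(W)=\EE_i(W)$.

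Given a strict chain $W_0 \subset \cdots \subset W_l$, the subspaces $W_k^{(i)}$ and $\EE_i(W_k)$ are monotone in $k$. For each step $k-1\to k$, some layer $i$ must have $W_{k-1}^{(i)}/W_{k-1}^{(i-1)} \neq W_k^{(i)}/W_k^{(i-1)}$ in the associated graded. Equivalently, the sequence of cog-images $\Cog_{\EE_i(W_k)}\mathcal{Q}_i$ strictly grows for some $i$. By Corollary~\ref{cor:cog-simple}, each $\Cog_\FF\mathcal{Q}_i$ is simple, and $\Cog_{\EE}\mathcal{Q}_i \cong \Cog_\FF \mathcal{Q}_i\otimes \EE$. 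The contribution of layer $i$ is thus governed by $\dim \EE_i \le |\Jclass_i|$. Summing over $i$, the length is at most $\sum_i |\Jclass_i| = \#\{J\subseteq I\} = 2^{|I|}$, since the $\Jclass_i$ partition the power set of $I$.

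The step I expect to be the main obstacle is making the additivity rigorous: showing that length equals the sum of the lengths of the graded pieces, and that each piece $\Cog_{\EE}\mathcal{Q}_i$ has length exactly $\dim \EE$. I would handle the first with the standard fact that length is additive along the short exact sequences $0\to W^{(i-1)}\to W^{(i)}\to W^{(i)}/W^{(i-1)}\to0$, applied to the induced chains via the Jordan--Hölder/Schreier refinement argument. For the second, I would show that the equivariant subspaces of $\Cog_\EE\mathcal{Q}_i$ are exactly $\Cog_{\EE'}\mathcal{Q}_i$ for subspaces $\EE'\subseteq\EE$. This follows from the proof of Theorem~\ref{thm:cogs-arise-everywhere}: applying $\prod_j(\mathrm{id}-\pi_j)$ to any vector isolates a single cog with a coefficient in $\EE$, so any equivariant subspace containing a vector with some coefficient $\lambda$ contains $\lambda\cdot(a\between b)$. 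Hence the lattice of equivariant subspaces of $\Cog_\EE\mathcal{Q}_i$ is the subspace lattice of $\EE$, of length $\dim\EE\le|\Jclass_i|$.
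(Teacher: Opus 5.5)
Your argument is correct, and its final count is the paper's: at each step of the chain some $\EE_i(W_k)=W_k{\restriction_i}(\mathcal{Q}_i)$ must grow strictly. A chain of subspaces of $\FF^{\Jclass_i}$ can grow strictly at most $|\Jclass_i|$ times, and $\sum_i|\Jclass_i|=2^{|I|}$. The only difference is how you justify the first point.

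The paper gets it in one line from Lemma~\ref{lem:coeff-approximation}. Since $W=\widehat W$, every equivariant $W$ is determined by the tuple $(\EE_i(W))_i$, so two equivariant subspaces with the same tuple coincide. You instead rebuild this through the filtration $W^{(i)}$. You identify $W^{(i)}/W^{(i-1)}$ via $\restriction_i$ with $\Cog_{\EE_i(W)}\mathcal{Q}_i$, using the two claims plus Theorem~\ref{thm:cog-span-generally}; this is precisely the inductive step of that lemma's proof. You then induct up the filtration to see that a strict inclusion forces some layer to grow.

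Your route buys a little more. Together with Corollary~\ref{cor:cog-simple} and $\Cog_\EE\mathcal{Q}_i\cong(\Cog_\FF\mathcal{Q}_i)^{\oplus\dim\EE}$, it gives a composition series of every equivariant $W$. It also gives the exact formula that the length of $W$ equals $\sum_i\dim\EE_i(W)$.

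For the corollary itself, though, the step you flag as the main obstacle is unnecessary. Once some $\Cog_{\EE_i(W_k)}\mathcal{Q}_i$ grows strictly, the monotone chain $\EE_i(W_0)\subseteq\cdots\subseteq\EE_i(W_l)$ must grow strictly at that same step. So no additivity of length or Jordan--H\"older refinement is needed.

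If you pursue the lattice description of $\Cog_\EE\mathcal{Q}_i$ anyway, note that the isolation trick $\prod_j(\mathrm{id}-\pi_j)$ only gives $\Cog_{\EE'}\mathcal{Q}_i\subseteq U$. The reverse inclusion also needs Theorem~\ref{thm:cog-span-generally} applied to $\EE'$.
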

\begin{proof}
    We obtain $t$ sequences of finite-dimensional vector spaces:
    \begin{align*}
        W_0 {\restriction_1} (\mathcal{Q}_1) \subseteq W_1 {\restriction_1} (\mathcal{Q}_1) \subseteq \cdots \subseteq W_l {\restriction_1} (\mathcal{Q}_1) \subseteq \FF^{\Jclass_1}, \\
        W_0 {\restriction_2} (\mathcal{Q}_2) \subseteq W_1 {\restriction_2} (\mathcal{Q}_2) \subseteq \cdots \subseteq W_l {\restriction_2} (\mathcal{Q}_2) \subseteq \FF^{\Jclass_2}, \\
        \vdots\phantom{,}\\
        W_0 {\restriction_t} (\mathcal{Q}_t) \subseteq W_1 {\restriction_t} (\mathcal{Q}_t) \subseteq \cdots \subseteq W_l {\restriction_t} (\mathcal{Q}_t) \subseteq \FF^{\Jclass_t}.
    \end{align*}
    At each of the $l$ steps, at least one of the $t$ containments must be strict:
    if two equivariant subspaces $W, W' \subseteq \Lin_\FF \cal O$ satisfy $W {\restriction_i}(\mathcal{Q}_i) = W' {\restriction_i}(\mathcal{Q}_i)$ for all $i=1,\ldots, t$,
    then $W = \widehat W = \widehat{W'} = W'$ by Lemma~\ref{lem:coeff-approximation}.
    Now the $i$-th sequence can accommodate at most $\dim(\field^{\Jclass_i}) = |\Jclass_i|$ strict containments,
    so we must have $l \leq |\Jclass_1| + |\Jclass_2| + \dots + |\Jclass_t| = 2^{|I|}$. 
\end{proof}

To complement the upper bound from Corollary~\ref{cor:length-upper-bound}, we now exhibit a chain of equivariant subspaces whose length is precisely $\sum_{i=1}^t |\Jclass_i|$, using the same idea as~\cite[Cor.~4.12]{BFKM24}.

Pick some $a\in \cal O$, and let $\pi_j$ (for $j \in I$) be the automorphisms from Lemma~\ref{lem:cog-fresh-full}.
Given $J \in \Jclass_i$, define a vector
\begin{align*}
    v_{J} =  \prod_{j \in J} (1 - \pi_j) a
\end{align*}
in $\Lin_\FF(\cal O)$. 
Write $\langle v_J \rangle$ for the linear span of the orbit of $v_J$; this is an equivariant subspace of $\Lin_\FF(\cal O)$.
Given $J' \in \Jclass_{i'}$, we compute:
\[
    (\langle v_{J} \rangle {\restriction_{i'}} (\mathcal{Q}_{i'}))_{J'} =
    \begin{cases}
        \{0\} & \text{if $J' \not\supseteq J$}, \\
        \FF & \text{otherwise}.
    \end{cases}
\]
Enumerating each $\Jclass_i$ in any order as $J_i^1, J_i^2, \dots, J_i^{|\Jclass_i|}$,
we thus obtain a chain
\begin{align*}
    \langle  \rangle
    \subset{} &\langle v_{J_1^1} \rangle 
    \subset{} \langle v_{J_1^1}, v_{J_1^2} \rangle  
    \subset{} \cdots 
    \subset{} \langle v_{J_1^1}, v_{J_1^2}, \dots, v_{J_1^{|\Jclass_1|}} \rangle \\
    \subset{} &\langle v_{J_1^1}, v_{J_1^2}, \dots, v_{J_1^{|\Jclass_1|}}, v_{J_{2}^1} \rangle 
     \subset{} \cdots
\end{align*}
of length $|\Jclass_1| + |\Jclass_2| + \cdots + |\Jclass_t| = 2^{|I|}$~--~observe that 
$\langle x , y \rangle {\restriction_{i'}} (\mathcal{Q}_{i'}) = \langle x \rangle {\restriction_{i'}} (\mathcal{Q}_{i'}) + \langle y \rangle {\restriction_{i'}} (\mathcal{Q}_{i'})$.
We conclude that the upper bound from Corollary~\ref{cor:length-upper-bound} is tight.